
\documentclass[10pt,leqno]{amsart}
\textheight 22.3cm
\textwidth 15cm
\voffset=-1.6cm
\hoffset=-1.0cm
\hfuzz=1pt


\usepackage{amssymb}
\usepackage{mathrsfs}
\usepackage{color}
\usepackage{soul}
\usepackage{graphicx}
\usepackage{enumitem}
\usepackage[normalem]{ulem}

\usepackage[pdfborder={0 0 0}]{hyperref}

\usepackage{MnSymbol}


\setcounter{tocdepth}{1}




\newcommand\N{{\mathbb N}}
\newcommand\R{{\mathbb R}}

\newcommand\C{{\mathbb C}}

\newcommand\E{{\mathbb E}}
\newcommand\Pp{{\mathbb P}}


\def\AA{{\mathcal A}}
\def\BB{{\mathcal B}}
\def\CC{{\mathcal C}}

\def\DD{{\mathcal D}}
\def\EE{{\mathcal E}}
\def\FF{{\mathcal F}}

\def\HH{{\mathcal H}}

\def\LL{{\mathcal L}}

\def\RR{{\mathcal R}}
\def\SS{{\mathcal S}}
\def\TT{{\mathcal T}}
\def\UU{{\mathcal U}}
\def\VV{{\mathcal V}}

\def\XX{{\mathcal X}}

\def\ZZ{{\mathcal Z}}

\def\BBB{{\mathscr B}}
\def\CCC{{\mathscr C}}

\def\HHH{{\mathscr H}}

\def\JJJ{{\mathscr J}}

\def\LLL{{\mathscr L}}

\DeclareMathOperator{\sign}{sign}


\def\eps{{\varepsilon}}



\newtheorem{theo}{Theorem}
\newtheorem{prop}[theo]{Proposition}
\newtheorem{lem}[theo]{Lemma}
\newtheorem{cor}[theo]{Corollary}
\newtheorem{rem}[theo]{Remark}

\newtheorem{defin}[theo]{Definition}


\newcommand{\beqn}{\begin{equation}}
\newcommand{\eeqn}{\end{equation}}
\newcommand{\bear}{\begin{eqnarray}}
\newcommand{\eear}{\end{eqnarray}}
\newcommand{\bean}{\begin{eqnarray*}}
\newcommand{\eean}{\end{eqnarray*}}
\newcommand{\bal}{\begin{aligned}}
\newcommand{\eal}{\end{aligned}}


\newcommand{\re}{\text{Re}}






\definecolor{orange}{rgb}{1.00,0.50,0.0}


\newcommand{\vertiii}[1]{{\left\vert\kern-0.25ex\left\vert\kern-0.25ex\left\vert #1 
    \right\vert\kern-0.25ex\right\vert\kern-0.25ex\right\vert}}

\def\signsm{\bigskip \begin{center} {\sc St\'ephane Mischler\par\vspace{3mm}
Universit\'e Paris-Dauphine \& IUF \par
CEREMADE, UMR CNRS 7534\par
Place du Mar\'echal de Lattre de Tassigny
75775, Paris Cedex 16\par
FRANCE\par\vspace{3mm}
e-mail:} \tt{mischler@ceremade.dauphine.fr} \end{center}}

\def\signcq{\bigskip \begin{center} {\sc Crist\'obal Qui\~ninao\par\vspace{3mm}
Universit\'e Pierre et Marie Curie \par
Laboratoire Jacques-Louis Lions, CNRS UMR 7598\par
4 place de Jussieu
F-75005, Paris\par
FRANCE\par
and Mathematical Neuroscience Team, CIRB\par 
College de France\par\vspace{3mm}
e-mail:} \tt{cristobal.quininao@college-de-france.fr}  \end{center}}

\def\signjt{\bigskip \begin{center} {\sc Jonathan Touboul\par\vspace{3mm}
College de France
Mathematical Neuroscience Team CIRB\par
11 place Marcelin-Berthelot
75005, Paris\par
FRANCE\par
and INRIA Paris-Rocquencourt, Mycenae Team\par \vspace{3mm}
e-mail:}\tt{jonathan.touboul@college-de-france.fr} \end{center}}


\begin{document}


\title[Kinetic {F}itz{H}ugh-{N}agumo system]{On a kinetic {F}itz{H}ugh-{N}agumo model of neuronal network}

\author{S. Mischler, C. Quininao, J. Touboul}

\begin{abstract}
 
We investigate existence and uniqueness of solutions of a McKean-Vlasov evolution PDE representing the macroscopic behaviour of interacting Fitzhugh-Nagumo neurons. This equation is hypoelliptic, nonlocal and has unbounded coefficients. We prove existence of a solution to the evolution equation and non trivial stationary solutions. Moreover, we demonstrate uniqueness of the stationary solution in the weakly nonlinear regime. Eventually, using a semigroup factorisation method, we show exponential nonlinear stability in the small connectivity regime. 
  
\end{abstract}

\maketitle

\begin{center} {\bf Preliminary version of \today}
\end{center}



%
%
%
%
%
	
\vspace{0.3cm}



\bigskip


\tableofcontents


\bigskip
\textbf{Keywords}: FitzHugh Nagumo,  Neuronal Network, 
long-time behaviour; stability.

 \smallskip

\textbf{AMS Subject Classification (2000)}:   35B45, 35B60, 35B65, 35K15, 35Q92, 92C17, 92B05


\section{Introduction} 
\label{sec:intro}
\setcounter{equation}{0}
\setcounter{theo}{0}

This paper undertakes the analysis of the existence and uniqueness of solutions for a mean-field equation arising in the modeling of the macroscopic activity of the brain. This equation describes the large-scale dynamics of a model of the central nervous system, taking into account the fact that it is composed of a very large number of interconnected cells that manifest highly nonlinear dynamics and are subject to noise. Non-linearities in the intrinsic dynamics of individual cells are an essential element of the neural code. Indeed, nerve cells constantly regulate their electrical potential depending on the input they receive. This regulation results from intense ionic exchanges through the cellular membranes. The modeling of these dynamics led to the development of the celebrated Hodgkin-Huxley model~\cite{hodgkin-huxley:52}, a very precise description of ion exchanges through the membrane and their effects on the cell voltage. A simplification of this model conserving the most prominent aspects of the Hodgkin-Huxley model, the Fitzhugh-Nagumo (FhN) model~\cite{fitzhugh:55,nagumo1962active}, has gained the status of canonical model of excitable cells in neuroscience. This model constitutes a very good compromise between versatility and accuracy on the one hand, and relative mathematical simplicity on the other hand. It describes the evolution of the membrane potential $v$ of the cell coupled to an auxiliary variable $x$, called the adaptation variable.
Different neurons interact through synapses that are either chemical or electrical. In the case of electrical synapses
 for instance, the evolution of the pair voltage-adaptation for a set of $n$ neurons $\{(v^i_t,x^i_t),\, 1\leq i\leq n\}$ satisfy the equations:
\begin{equation}\label{eq:FhNNet}
	\begin{cases}
			dv^i_t = \left( v^i_t\,(v^i_t-\lambda)\,(1-v^i_t)-x^i_t + \sum_{j=1}^n J_{ij} (v^i_t - v^j_t) +I_t \right)\,dt + \sigma \, dW^i_t\\
			dx^i_t = \left(-a x_t^i + b v^i_t\right)\,dt ,
	\end{cases}
\end{equation}
where the cubic nonlinearity accounts for the cell excitability, $I_t$ is the input level, $a$ and $b$ are positive constants representing timescale and coupling between the two variables, and the processes $\{(W^i_t)_{t\geq 0},\, 1\leq i\leq n\}$ are independent Brownian motions accounting for the intrinsic noise at the level of each cell. In the sequel, for sake of simplicity, we assume that $\sigma^2=2$ and $I_t=I_0\in\R$ constant, but  it is likely that some of our analysis can be extend to $I_t\in L^\infty(\R_+)$ converging rapidly when $t$ goes to infinity. The coefficients $J_{ij}$ represent the effect of the interconnection of cell $j$ onto cell $i$. These coefficients are positive, and incorporate the information of the connectivity map. Under relatively weak assumptions on the distribution of these coefficients (see Appendix~\ref{app:MFLim}), it is relatively classical to show that the system enjoys propagation of chaos property and finite sets of neurons converge in law towards a process whose density solves the McKean-Vlasov evolution PDE:
\begin{eqnarray}\label{eq:FhNs}
&&\partial_t f = Q_\eps[\JJJ_f]\,f := \partial_x(A f) + \partial_v\big(B_\eps(\JJJ_f)f\big) + \partial_{vv}^2 f  \vphantom{\int} \quad \hbox{on} \,\, (0,\infty) \times \R^2, 
\\ \label{eq:FhNs2}
&& A = A(x,v)= ax-bv, \quad B_\eps(\JJJ_f) = B(x,v\,;\eps,\JJJ_f)  \vphantom{\int} ,
\\ \label{eq:FhNs3}
&&B(x,v\,;\eps,j) = v\,(v-\lambda)\,(v-1) + x  - \eps \, (v-j) + I_0, \quad \JJJ_f = \JJJ(f) = \int_{\R^2}v \, f(x,v) \, dvdx ,
\end{eqnarray}
where $\eps$ denotes the averaged value of the connectivity coefficients $J_{ij}$ and $f = f(t,x,v) \ge 0$ is the density function of finding neurons with adaptation and voltage $(x,v) \in \R^2$ at time $t \ge 0$. The evolution equation~\eqref{eq:FhNs} is complemented by an initial condition
$$
f(0,\cdot,\cdot) = f_0(\cdot,\cdot)  \ge 0 \quad \hbox{in} \,\,  \R^2.
$$
Since the PDE can be written in divergence form, the initial normalization of the density is conserved. In particular, consistent with the derivation of the system, we have:
$$
\int_{\R^2} f(t,x,v) \, dxdv = \int_{\R^2} f_0(x,v) \, dxdv = 1,
$$
 when $f_0$ is normalized. Moreover, the nonnegativity is also a classical result of this kind of equations (for a brief discussion see Section~\ref{sec:estim}), therefore we assume in the sequel that $f$ is a probability density.


\bigskip
From the mathematical viewpoint, this equation presents several interests. First, the system is not Hamiltonian and the dynamics may present several equilibria, therefore, methods involving a potential and its possible convexity may not be used. Second, intrinsic noise acts as a stochastic input only into the voltage variable (since it modifies the voltage through random fluctuations of the current), leaving the adaptation equation unchanged and yielding to a hypoelliptic equation.
From the phenomenological viewpoint, this system is particularly rich. It shows a number of different regimes as parameters are varied, and in particular, as a function of the connectivity level: the system goes from a non-trivial stationary regime in which several stationary solutions may exist for strong coupling, to periodic solutions, and eventually to a unique stationary solution for weak coupling. This is illustrated in section~\ref{sec:Beyond}, in particular, we present some numerical results of~\eqref{eq:FhNNet} for a large number of interacting neurons.

In order to rigorously analyse equation~\eqref{eq:FhNs}, we restrict ourself to the latter regime, and we shall demonstrate the existence, uniqueness and stability of solutions to the McKean-Vlasov equation in the limit of weak coupling. 
More precisely, we shall prove existence of solution and non trivial stationary solution to the evolution equation~\eqref{eq:FhNs} without restriction on the connectivity coefficient $\eps>0$, and next uniqueness of the stationary solution and its exponential NL stability in the small excitability regime. 

\subsection{Historical overview of macroscopic and kinetic models in neuroscience}
As mentioned above, the problem we study lies within a long tradition of works in the domain of the characterization of macroscopic behaviors in large neuronal networks. First efforts to describe the macroscopic activity of large neuron ensemble can be traced back to the work of Amari, Wilson and Cowan in the 1970s~\cite{amari:72,amari:77,wilson-cowan:72,wilson-cowan:73}, where were introduced heuristically derived equations on the averaged membrane potential of a population of neurons. These models made the assumption that populations interact through a macroscopic variable, the averaged firing rate of the population, assumed to be a sigmoidal transform of the mean voltage. This model has been extremely successful in reproducing a number of macroscopic behaviors in the cortex, one of the most striking being related to pattern formation in the cortex associated to visual hallucinations~\cite{ermentrout1979mathematical}  (see also~\cite{bressloff:12} for a recent review on the subject). The relatively simplicity and good agreement with neurological phenomena motivated to understand the relationship between the dynamics of individual cells activity and macroscopic models. This has been an important piece of work in the 1990s in the bio-physics community, using simplified (non-excitable) models and specific assumptions on the architecture of the network, including the assumption of sparse and balanced connectivity (the sum of all incoming input vanishes). The sparse connectivity assumption was used by the authors to stated that the activity was uncorrelated~\cite{abbott-van-vreeswijk:93,amit-brunel:97,brunel-hakim:99}, and resulted in characterizing different neuronal states. Alternative approaches were also developed based on population density~\cite{cai-tao:04} methods. These yield complex partial differential equations, that were reduced to a set of moment equations from which authors may deduce the behavior of the system. The validity of these moment reduction and their well-posedness is a complex issue debated in the literature, see e.g.~\cite{ly-tranchina:07}. A transition Markov two-states model governing the firing dynamics of the neurons in the network was recently introduced. In these models, the transition probability of the system, written through a master equation, is then handled using different physics techniques including van Kampen expansions or path integral methods. This modeling recently gathered the interest of the community (see for example~\cite{buice-cowan:07,bressloff:09,elboustani-destexhe:09,touboul-ermentrout:11}).

The mathematical community also undertook the analysis of the problem since the beginning of this decade. In that domain, one can distinguish also two distinct approaches: on one side, the development of mathematical models for simplified or phenomenological neuronal models, and on the other side works on the precise neuronal models. The dynamics of solutions of macroscopic limits of phenomenological neuron models is much more developed. The characterization of the stationary (or periodic) solutions was done in a simplified model, the Wilson-Cowan system, which has the important advantage to yield a Gaussian solution whose mean and standard deviation satisfy a deterministic dynamical system that may be studied analytically~\cite{touboulNeuralFieldsDynamics:11,touboul-hermann-faugeras:11} using the analysis of ordinary differential equations. Artificial spiking neuronal models representing the discontinuous dynamics of the time to the next spike were analyzed in a number of situations, including construction of periodic solutions to the limit equation in the presence of delays~\cite{pakdaman-perthame-etal:10,pakdaman2012adaptation,pakdaman2013relaxation}. In the same vein, an important result was demonstrated on integrate-and-fire models in the presence of noise and excitation: it was shown that too much excitation could prevent the existence of solutions for all times, as the firing rate blows up in finite time~\cite{caceres-carrillo:11}. These approaches make use of functional analysis of PDEs and nonlocal age-structured type of equations. 

\subsection{Organization of the paper}
The paper is organized as follows. Section~\ref{sec:DefResNot} summarizes our main results that are demonstrated in the rest of the paper. Section~\ref{sec:estim} is interested with the existence, uniqueness and \emph{a priori} estimates on the solutions to the evolution equation, as well as, the existence of stationary solutions. The next sections prove the stability of the unique stationary solution. Our proof uses factorization of the linearized semigroup allowing to prove linear stability, which we complete in section~\ref{sec:StabilityStatSol} by an analysis of the nonlinear stability of the stationary solution. Along the way, a number of open problems were identified beyond the small connectivity regime treated here that we present in section~\ref{sec:Beyond} together with numerical simulations: we will observe that the stationary solution splits into two stable stationary solutions as connectivity is increased, and in an intermediate regime, periodic solutions emerge. Two appendices complete the paper. Appendix~\ref{app:MFLim} investigates the microscopic system and its convergence towards the mean-field equation~\eqref{eq:FhNs} and Appendix~\ref{app:MaximumPrinciple} deals with the strict positivity of stationary solutions.

\section{Summary of the main results}\label{sec:DefResNot}
\label{sec:summary}
\setcounter{equation}{0}
\setcounter{theo}{0}

\subsection{Functional spaces and norms} 
We start by introducing the functional framework in which we work throughout the paper. For any exponent $p\in[1,\infty]$ and any nonnegative weight function $\omega$, we denote by $L^p(\omega)$ the Lebesgue space $L^p(\R^2;\omega \,dx\,dv)$ and for $k\in\N$ the corresponding Sobolev spaces $W^{k,p}(\R^2;\omega \,dx\,dv)$. They are associated to the norms
$$
\| f \|_{L^p(\omega)} = \| f \omega \|_{L^p},\quad
\| f \|_{W^{k,p}(\omega)}^p= \|f\|^p_{L^p(\omega)}+\sum_{j=1}^k \|D_{x,v}^k f \|_{L^p(\omega)}^p.
$$
For $k\geq1$, we define the partial $v$-derivative space $W^k_v(\omega)$ by
$$
 W^{k,p}_v(\omega)\,:=\,\{\,f\,\in\,W^{k-1,p}(\omega)\,;\, D_v^kf\,\in\, L^p(\omega) \,\},
$$
and it is natural to associate them to the norm
$$
 \| f \|_{W^{k,p}_v(\omega)}^p= \| f \|_{W^{k-1,p}(\omega)}^p+\|D^k_v f\|_{L^p(\omega)}^p.
$$
A particularly important space in our analysis, denoted by $H^2_v(\omega)$, is
$$
H^2_v(\omega)= W^{2,2}_v(\omega) = \{f\in H^1(\omega)\text{ such that }\partial_{vv}^2 f\in L^2(\omega)\},
$$
together with the set of functions with finite entropy
$$
 L^1\,\log L^1\,\,:=\,\,\Big\{f\,\in\,L^1(\R^2)\text{ such that } f\geq0\text{ and }\HHH(f)<\infty\Big\},
$$
where we use the classical notation $\HHH(f):=\int_{\R^2} f\,\log f$.
Finally, for $\kappa>0$, let us introduce the exponential weight function:
\begin{equation}\label{eq:weightFunc}
m=e^{\kappa (M-1)}\quad\text{with}\quad M:= 1+x^2/2 + v^2/2.
\end{equation}
In the sequel, we will be brought to vary the constant $\kappa$ involved in the definition of $m$, therefore we introduce the shorthand $m_i=e^{\kappa_i (M-1)}, \; i\in\N$. Unless otherwise specified, these sequences are constructed under the assumption that the sequence $\kappa_i$ is strictly increasing. 
 
\subsection{Main results} 

We start by stating a result related to the well possedness of~\eqref{eq:FhNs} and to the \emph{a priori} bounds on the solution. Using classical theory of renormalized solutions, it is not hard to see that equation~\eqref{eq:FhNs} has indeed {\it weak solutions}, which we naturally define as:
\begin{defin}\label{def:One}
Let $f_0$ be a normalized nonnegative function defined on $\R^2$ such that $\JJJ(f_0)$ is well defined. We say that $f_t(x,v)\,:=\, (t,x,v)\mapsto f(t,x,v)$ is a weak solution to~\eqref{eq:FhNs} if the following conditions are fulfilled:
\begin{itemize}
 \item[-] $f\in C([0,\infty); L^1(M^2))$;
 \item[-] for almost any $t\geq0$, $f\geq0$ and
 $$
  \int_{\R^2} f(t,x,v)\,dx\,dv \,=\, \int_{\R^2} f_0(x,v)\,dxdv \,=\,1;
 $$
 \item[-] for any $\varphi\in C^1([0,\infty);C^\infty_c(\R^2))$ and any $t\geq0$ it holds
\begin{equation}\label{eq:Solution}
 \int_{\R^2} \varphi f_t = \int_{\R^2}\varphi f_0 +\int_0^t\int_{\R^2} \big[\partial_t\varphi+\partial^2_{vv}\varphi-A\,\partial_x\varphi-B_\eps(\JJJ(f_s))\partial_v\varphi\big] f_s.
\end{equation}
\end{itemize}
\end{defin}
\bigskip 

\noindent Equipped with this definition we can state the
\begin{theo} \label{th:E&U&B} For any $f_0 \in L^1(M^2) \cap L^1 \log L^1\cap \Pp(\R^2)$, there exists a unique {\it global weak solution} $f_t$ to the FhN equation~\eqref{eq:FhNs}, that moreover satisfies
\beqn\label{eq:L1H}
 \|f_t\|_{L^1(M)} \le \max ( C_0, \| f_0 \|_{L^1(M)} ),
\eeqn
and depends continuously in $L^1(M)$ to the initial datum. More precisely, if $f_{n,0} \to f_0$ in $L^1(M)$ and $\HHH(f_{n,0})\le C$ then $f_{n,t} \to f_t$ in $L^1(M)$ for any later time $t \ge 0$. 
 
Furthermore, there exist two norms $\|\cdot\|_{\HH^1}$ and $\|\cdot\|_{\HH^2_v}$ equivalent respectively to $\|\cdot\|_{H^1(m)}$ and $\|\cdot\|_{H^2_v(m)}$, such that the following estimates hold true:
\beqn\label{eq:L1m}
 \| f_t\|_{L^1(m)} \le \max ( C_1,  \|f_0\|_{L^1(m)} ),
 \eeqn
 as well as
 \beqn\label{eq:H1m}
\| f_t \|_{\HH^1} \le  \max ( C_2,  \| f_0 \|_{\HH^1} ) ,
\eeqn
and
 \beqn\label{eq:H2vm}
 \|f_t\|_{\HH^2_v}\leq \max ( C_3,\|f_0\|_{\HH^2_v} ),
 \eeqn
where $C_1,C_2,C_3$ are positive constants.
 
\end{theo}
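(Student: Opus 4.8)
\emph{The plan} is to solve first the \emph{linear} Fokker--Planck equation obtained by freezing the nonlocal coupling, then to recover the genuine McKean--Vlasov solution by a fixed point on that coupling, and finally to promote this to uniqueness and continuous dependence by an $L^1(M)$ stability estimate; all the quantitative content — the bounds \eqref{eq:L1H}--\eqref{eq:H2vm} — is obtained as a priori estimates propagated along the flow (after the usual regularisation). Fix $j\in C_b([0,\infty))$ and consider $\partial_t f = Q_\eps[j]f := \partial_x(Af)+\partial_v\big(B(x,v\,;\eps,j)\,f\big)+\partial^2_{vv}f$ with datum $f_0$. This operator is hypoelliptic — the diffusion acts only in $v$, but the bracket $[\partial_v,A\partial_x]$ generates $\partial_x$ because $\partial_v A=-b\neq0$, so H\"ormander's condition holds — so, together with the classical $L^1$ renormalisation theory invoked above, the linear problem has a unique weak solution, which is nonnegative by the maximum principle and of constant mass since it is in divergence form.

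\emph{A priori estimates.} Two structural features do all the work. First, $B$ is cubic in $v$, giving $-v^4$--confinement against any polynomial or exponential weight; this dominates \emph{every} lower-order term, in particular the nonlocal one $\eps j v$ once one notes that in the closed equation $|j|=|\JJJ_f|\le\int|v|f\le\|f\|_{L^1(M)}$ (Cauchy--Schwarz, $\int f=1$). Second, the coupling $A=ax-bv$ is the channel through which the $v$--diffusion reaches the $x$--variable. Testing against $M$ and $M^2$: $\frac{d}{dt}\int M^k f=\int\big(\partial^2_{vv}-A\partial_x-B\partial_v\big)M^k\,f$, whose leading part is $-k\,v^4M^{k-1}-ak\,x^2M^{k-1}$, so Young's inequality on the remaining polynomial terms yields $\frac{d}{dt}\int M^k f\le K_k-\delta_k\int M^kf$, hence \eqref{eq:L1H} with $C_0=K_1/\delta_1$ and a matching $L^1(M^2)$ bound (needed to put the solution in $C([0,\infty);L^1(M^2))$). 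The same computation with $m=e^{\kappa(M-1)}$ — now $\partial^2_{vv}m=(\kappa+\kappa^2v^2)m$, and $\kappa^2v^2m$ is crushed by the $-\kappa v^4m$ from $-B\partial_v m$ — gives \eqref{eq:L1m}, for any small $\kappa>0$. For the weighted Sobolev bounds one uses a hypocoercive twisted norm
$$\vertiii{f}_{\HH^1}^2:=\|f\|_{L^2(m)}^2+\alpha\|\partial_v f\|_{L^2(m)}^2+2\beta\,(\partial_v f,\partial_x f)_{L^2(m)}+\gamma\|\partial_x f\|_{L^2(m)}^2,$$
with $1\gg\alpha\gg\beta\gg\gamma>0$ and $\beta^2\ll\alpha\gamma$, so that $\vertiii{\cdot}_{\HH^1}\simeq\|\cdot\|_{H^1(m)}$. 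Differentiating the equation in $x$ and in $v$ and computing $\frac{d}{dt}\vertiii{f}_{\HH^1}^2$: the $v$--block acquires the diffusive gain $-\|\partial^2_{vv}f\|_{L^2(m)}^2$, while the mixed block acquires $-2b\beta\|\partial_x f\|_{L^2(m)}^2$ because $[\partial_v,Q_\eps]f$ contains the term $-b\,\partial_x f$ (again $\partial_v A=-b$); this is the $x$--dissipation the equation by itself lacks, and it beats the $x$--loss $+2a\gamma\|\partial_x f\|_{L^2(m)}^2$ coming from $[\partial_x,Q_\eps]f=a\partial_x f+\partial_v f$. The only genuinely dangerous coefficient, $\partial_v B=O(v^2)$ (also in $[\partial_v,Q_\eps]$), is absorbed by the $v^4$--weighted part of the weight/diffusion dissipation. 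One thus gets $\frac{d}{dt}\vertiii{f}_{\HH^1}^2\le C_2'-\delta\|f\|_{H^1(m)}^2$, hence \eqref{eq:H1m}; adding $\eta\|\partial^2_{vv}f\|_{L^2(m)}^2$ (with $\eta$ small, plus a further mixed term to restore a little $x$--regularity if needed) defines $\vertiii{\cdot}_{\HH^2_v}$ and the argument repeats — $[\partial^2_{vv},\partial_v(B\,\cdot)]$ has coefficients of degree $\le2$ in $v$ because $B$ is cubic, again controlled by confinement — giving \eqref{eq:H2vm}.

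\emph{Closing the nonlinearity, uniqueness and continuity.} For existence of the McKean--Vlasov solution, run a fixed point on $j$: by \eqref{eq:L1H} the first moment of the linear solution $f[j]$ stays in the ball $\{|j|\le C_0\}$ of $C([0,T])$, and $j\mapsto\JJJ_{f[j]}$ is a contraction there for $T$ small. Indeed, for $f_i=f[j_i]$ with the same datum, $h=f_1-f_2$ solves $\partial_t h=\partial_x(Ah)+\partial_v(B_\eps(j_1)h)+\eps(j_1-j_2)\partial_v f_2+\partial^2_{vv}h$; testing against $M\,\sign(h)$ (Kato's inequality for the $v$--diffusion) and using the confinement gives $\frac{d}{dt}\|h\|_{L^1(M)}\le C\|h\|_{L^1(M)}+\eps|j_1-j_2|\,\|M\partial_v f_2\|_{L^1}$, whence $|\JJJ_{f_1}-\JJJ_{f_2}|\le\|h\|_{L^1(M)}\le \eps\,C(T,f_0)\,\|j_1-j_2\|_{L^\infty}$ with $C(T,f_0)\to0$ as $T\to0$; the global solution then follows by continuation using the time-uniform bounds above. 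For uniqueness and continuity, one runs the same computation between a constructed solution $f$ and an \emph{arbitrary} weak solution $g$, now keeping the full nonlocal term, to get $\frac{d}{dt}\|f_t-g_t\|_{L^1(M)}\le\big(C+\eps\,\|M\partial_v g\|_{L^1}\big)\,\|f_t-g_t\|_{L^1(M)}$, using $|\JJJ_f-\JJJ_g|\le\|f-g\|_{L^1(M)}$. The weight $\|M\partial_v g\|_{L^1}$ is controlled by the entropy dissipation: testing against $1+\log g$ and noting that the drift terms contribute only $\int(\partial_x A)g+\int(\partial_v B)g$, bounded by the moment estimates, one gets $\frac{d}{dt}\HHH(g)\le C-4\|\partial_v\sqrt g\|_{L^2}^2$, so $\partial_v\sqrt g\in L^2_{\mathrm{loc}}(dt;L^2)$ and $\|M\partial_v g\|_{L^1}\le 2\|g\|_{L^1(M^2)}^{1/2}\|\partial_v\sqrt g\|_{L^2}\in L^2_{\mathrm{loc}}(dt)$ — this is precisely where the assumption $f_0\in L^1\log L^1$ enters. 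Gr\"onwall then gives $\|f_t-g_t\|_{L^1(M)}\le\|f_0-g_0\|_{L^1(M)}\exp\!\big(Ct+\eps\!\int_0^t\!\|M\partial_v g\|_{L^1}\big)$: uniqueness ($f_0=g_0$), and the asserted $L^1(M)$--continuous dependence.

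\emph{Main obstacle.} The crux is the weighted $H^1$/$H^2_v$ estimate: one has to keep the twisted norm equivalent to the plain one, compensate the total absence of $x$--dissipation by the $v$--diffusion funnelled through the mixed term, and dominate the bad commutator coefficients $\partial_v B=O(v^2)$ and $\partial^2_{vv}B=O(v)$ by the quartic confinement — all while dragging the nonlocal drift $\eps(\,\cdot\,-\JJJ_f)$ along. Finding the joint scaling of $\kappa$ and $\alpha,\beta,\gamma,\eta$ that closes all these terms at once is the real work; the rest is standard parabolic/hypoelliptic theory and Gr\"onwall.
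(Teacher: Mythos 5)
Your architecture is essentially the paper's: weighted $L^1$ moment bounds giving \eqref{eq:L1H}--\eqref{eq:L1m} and the uniform bound on $\JJJ(f_t)$, a twisted hypocoercive functional exploiting $\partial_vA=-b$ to manufacture $x$--dissipation for \eqref{eq:H1m}--\eqref{eq:H2vm}, an entropy/partial--Fisher--information bound to control $\int M|\partial_v g|$, the resulting $L^1(M)$ stability estimate for uniqueness and continuous dependence (the paper's Corollary~\ref{cor:Estimf-g}), and existence by solving the frozen-$\JJJ$ linear problem and contracting $\JJJ\mapsto f$ on a short time interval. The only presentational difference in the existence step is that the paper does not quote hypoelliptic/renormalised theory as a black box for the linear problem (the drift is cubic, hence unbounded): it builds the solution by the Lions variational theorem after adding a vanishing $\sigma\,\partial^2_{xx}$ viscosity and a truncation $\chi_R$, then removes both.

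Two points in your $H^1(m)$ estimate are genuine gaps as written. First, the differential inequality one can actually prove has the form $\frac{d}{dt}\vertiii{f}_{\HH^1}^2\le K_1\|f\|_{L^2(\R^2)}^2-K_2\vertiii{f}_{\HH^1}^2$: the confinement polynomial is negative only outside a compact set, so the unweighted $L^2$ remainder is \emph{not} a constant and is not ``crushed by confinement''. To reach \eqref{eq:H1m} you must interpolate $\|f\|_{L^2}^2\le C\|f\|_{L^1}\|D_{x,v}f\|_{L^2}$ (Nash, the paper's \eqref{eq:Nash}) and use $\|f_t\|_{L^1}=1$, absorbing the gradient into the dissipation of the twisted norm; your sketch skips this step, and without it the bound is not time-uniform. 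Second, in the mixed block $\langle\partial_xQ_\eps f,\partial_vf\rangle+\langle\partial_vQ_\eps f,\partial_xf\rangle$ the \emph{undifferentiated} cubic coefficient $B=O(|v|^3)$ multiplies $\partial^2_{vv}f\,\partial_xf$; with a single weight $m$, the dissipation available to absorb it is only $-\|\partial^2_{vv}f\|_{L^2(m)}^2$ (no $M$-gain) and $-\|\partial_xf\|_{L^2(M^{1/2}m)}^2$, which cannot swallow the extra factor $M^{3/2}$, so identifying $\partial_vB=O(v^2)$ as ``the only genuinely dangerous coefficient'' is too optimistic and your claimed closure does not follow as stated. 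The paper's device is to measure the cross term in a strictly weaker exponential weight $m_1$ with $\kappa_1<\kappa_2$, so that $M^{3/2}m_1^2\le C\,m_2^2$ and the bad term is dominated by the $m_2$-dissipation (see the estimate leading to \eqref{eq:four}); some such weight hierarchy, or polynomially improved dissipation built into the norm, must be added to your single-weight functional. With these two repairs your argument matches the paper's proof; the entropy/uniqueness part and the fixed point on $\JJJ$ are correct as proposed.
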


\smallskip
 \noindent The other two main results of the present work can be summarized in the following
\begin{theo} \label{th:StatExist} For any $\eps\geq0$, there exists at least one stationary solution $G_\eps$ to the  FhN statistical equation \eqref{eq:FhNs}, that is 
\beqn\label{eq:StatExist}
G_\eps \in H^2_v(m) \cap \Pp(\R^2), \quad
0 =  \partial_x(A G_\eps) + \partial_v(B_\eps(\JJJ_{G_\eps}) G_\eps) + \partial_{vv}^2 G_\eps \quad \hbox{in} \quad  \R^2.
\eeqn
Moreover, 
there exists an increasing function $\eta : \R_+ \to \R$ such that $\eta(\eps)\xrightarrow[\eps\rightarrow0]{} 0$ and such that any solution to \eqref{eq:StatExist}  satisfies 
$$
\|G  - G_0 \|_{L^2(m)} \le \eta(\eps),
$$
where $G_0$ is the unique stationary solution corresponding to the case $\eps=0$.
 \end{theo}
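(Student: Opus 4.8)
The plan is to decouple the nonlinearity by freezing the scalar current $\JJJ$, to produce a stationary solution by a one-dimensional fixed point argument, and then to obtain the quantitative closeness to $G_0$ by inverting the (genuinely \emph{linear}) operator at $\eps=0$ around the source term generated by the connectivity term $-\eps(v-\JJJ_f)$.

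\textbf{Existence.} First I would fix $\eps\ge0$ and $j\in\R$ and study the linear hypoelliptic Fokker--Planck equation
\begin{equation*}
0 \;=\; \LL_{\eps,j}\,G \;:=\; \partial_x(AG)+\partial_v\!\big(B(x,v\,;\eps,j)\,G\big)+\partial_{vv}^2G,\qquad G\ge0,\quad \int_{\R^2}G=1 .
\end{equation*}
Because the $v$-drift contains the confining cubic $-v^3$ and $-A=-(ax-bv)$ is linearly stable ($a>0$), the weight exponent $M=1+x^2/2+v^2/2$ of \eqref{eq:weightFunc} is a Lyapunov function: computing the backward generator and applying Young's inequality gives $\big(-A\partial_x-B(\cdot\,;\eps,j)\partial_v+\partial_{vv}^2\big)M\le -cM+C(1+(\eps|j|)^{4/3})$ with $c>0$ and $C$ independent of $j$. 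Together with Hörmander's hypoellipticity and the irreducibility of the associated diffusion, this yields (by classical Harris/Doeblin theory, or by running the arguments behind Theorem~\ref{th:E&U&B} on this linear equation) a unique solution $G_{\eps,j}$, which is smooth, decays faster than any Gaussian, and is bounded in $H^2_v(m)\cap\Pp(\R^2)$ uniformly for $\eps\in[0,1]$ and $j$ in compacts; integrating the Lyapunov inequality against $G_{\eps,j}$ bounds $\int M\,G_{\eps,j}$, hence gives the \emph{sublinear} a priori estimate $|\Phi_\eps(j)|\le C(1+(\eps|j|)^{2/3})$, where $\Phi_\eps(j):=\JJJ(G_{\eps,j})=\int_{\R^2}v\,G_{\eps,j}\,dv\,dx$. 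The map $\Phi_\eps$ is continuous: if $j_n\to j$ the family $G_{\eps,j_n}$ is bounded in $H^2_v(m_1)$, a subsequence converges in $L^2(m)\subset L^1(M)$ towards a solution of $\LL_{\eps,j}G=0$ which is $G_{\eps,j}$ by uniqueness, and the uniform weighted bound passes the limit through $\int v\,G_{\eps,j_n}$. Since $\Phi_\eps(j)-j\to\mp\infty$ as $j\to\pm\infty$, the intermediate value theorem (equivalently, $\Phi_\eps$ maps a large interval into itself and Brouwer applies) produces a fixed point $j^\star=j^\star(\eps)$; then $G_\eps:=G_{\eps,j^\star}$ has $\JJJ_{G_\eps}=j^\star$, so $B(\cdot\,;\eps,\JJJ_{G_\eps})=B(\cdot\,;\eps,j^\star)$ and $G_\eps$ solves \eqref{eq:StatExist} in $H^2_v(m)\cap\Pp(\R^2)$. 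The sublinear bound at $j=j^\star$ moreover forces $|j^\star(\eps)|\le C$ uniformly for $\eps\in[0,1]$, so running the a priori estimates of Theorem~\ref{th:E&U&B} directly on the stationary equation gives $\|G_\eps\|_{H^2_v(m_1)}\le C$ and $|\JJJ_{G_\eps}|\le C$ uniformly in $\eps\in[0,1]$ for \emph{every} stationary solution, with $m_1$ a slightly heavier weight.

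\textbf{Closeness to $G_0$.} For $\eps=0$ the coefficient $B(\cdot\,;0,j)=v(v-\lambda)(v-1)+x+I_0$ does not depend on $j$, so $\LL_0$ is a fixed linear operator with $G_0$ its unique invariant density. I would then invoke the hypocoercivity of $\LL_0$ — obtainable by Villani/Hérau-type estimates, or via the semigroup factorisation carried out in the later sections of this paper — which gives a spectral gap in $L^2(m)$: $\LL_0$ is invertible on $\{h:\int_{\R^2}h=0\}$, and the hypoelliptic smoothing of $e^{t\LL_0}$ provides the one-$v$-derivative gain $\|\LL_0^{-1}\partial_v\psi\|_{L^2(m)}\le C\|\psi\|_{L^2(m_1)}$. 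For an arbitrary stationary solution $G=G_\eps$, subtracting the two stationary equations and using $B(\cdot\,;\eps,\JJJ_G)-B(\cdot\,;0)=-\eps(v-\JJJ_G)$ gives
\begin{equation*}
\LL_0\,(G-G_0)\;=\;\eps\,\partial_v\!\big((v-\JJJ_G)\,G\big),\qquad \int_{\R^2}(G-G_0)=0 ,
\end{equation*}
and since $\|(v-\JJJ_G)\,G\|_{L^2(m_1)}\le C$ by the uniform bounds of the existence step (the gain from $m$ to $m_1$ absorbing the polynomial factor), this yields $\|G_\eps-G_0\|_{L^2(m)}\le C\eps$ for $\eps\in[0,1]$. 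Choosing $\eta(\eps):=\sup\{\|G_{\eps'}-G_0\|_{L^2(m)}:\eps'\le\eps,\ G_{\eps'}\text{ stationary}\}$ (extended arbitrarily and increasingly for $\eps>1$) gives a nondecreasing $\eta$ with $\eta(\eps)\to0$ as $\eps\to0$, which is the claim.

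\textbf{Main obstacle.} The Lyapunov/moment estimates and the fixed-point topology are routine. The genuine difficulties are analytic: first, the well-posedness, smoothness and \emph{weighted} $H^2_v(m)$ bounds for the linear hypoelliptic stationary problem, where hypoellipticity must be exploited carefully (the same machinery as for Theorem~\ref{th:E&U&B}); and second, more seriously, the quantitative invertibility of $\LL_0$ on zero-average functions in $L^2(m)$ together with the gain of one $v$-derivative — this is exactly a hypocoercive estimate and is the substantial input borrowed from the later, factorisation-based part of the paper. If one prefers not to use the derivative gain, one can instead pair the difference equation with $G-G_0$ in the hypocoercive norm and close using the uniform $H^2_v(m_1)$ bound on $G_\eps-G_0$, at the weaker but still sufficient rate $\|G_\eps-G_0\|_{L^2(m)}\le C\sqrt{\eps}$.
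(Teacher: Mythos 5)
Your proposal is correct in substance, but the existence part follows a genuinely different route from the paper. For existence, the paper never freezes the current: it works with the full nonlinear semigroup $S_{Q_\eps}(t)$, uses the \emph{a priori} evolution bounds \eqref{eq:L1H}, \eqref{eq:H2vm} and the entropy estimate to build a convex set $\ZZ$ that is invariant under the flow and compact in $L^1(\R^2)$ (via $H^1(m)\hookrightarrow L^1$), and then applies the abstract Brouwer-type fixed point result of Theorem~\ref{th:AbsStat} to produce a stationary state; this avoids entirely the question of uniqueness of the invariant measure for the frozen-$\JJJ$ linear equation. Your scheme instead solves the linear stationary problem for each fixed $j$, which requires Harris/Doeblin-type uniqueness (Lyapunov function plus hypoellipticity and irreducibility) and weighted $H^2_v$ regularity for the frozen stationary state — classical facts, but extra input not proved in the paper — and then closes with a one-dimensional intermediate value argument on $j\mapsto\Phi_\eps(j)-j$, whose sublinearity you correctly extract from the Lyapunov inequality. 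What your route buys is a concrete parametrization of the stationary set by the zeros of $\Phi_\eps(j)-j$; what the paper's route buys is economy, since everything needed is already proved for the evolution problem. For the closeness estimate the two arguments are essentially the same: both reduce to $\LLL_0(G_\eps-G_0)=\eps\,\partial_v\big((v-\JJJ_{G_\eps})G_\eps\big)$ with zero mean and invoke the spectral gap of $\LLL_0$ in $L^2(m)$ on mean-free functions, which the paper obtains from the Krein--Rutman/factorization machinery (Theorem~\ref{th:KRG} with the splitting $\AA+\BB_0$) and then exploits through Duhamel's formula letting $t\to\infty$, while you phrase it as a stationary resolvent bound. One remark: the one-$v$-derivative gain $\|\LLL_0^{-1}\partial_v\psi\|_{L^2(m)}\le C\|\psi\|_{L^2(m_1)}$ that you invoke (and whose proof via adjoint smoothing would need some care) is not needed, and neither is your $\sqrt{\eps}$ fallback — as in the paper, one simply puts the derivative on $G_\eps$ and uses the uniform $H^1(m_2)$ bound on stationary solutions, so that $\|\partial_v((v-\JJJ_{G_\eps})G_\eps)\|_{L^2(m)}\le C$ and the rate $\eta(\eps)=C\eps$ follows directly.
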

\smallskip

\begin{theo} \label{th:WeakCregime} There exists $\eps^* > 0$ such that, in the small connectivity regime $\eps \in (0,\eps^*)$, the stationary solution 
is unique and exponentially stable. More precisely, there exist $\alpha^*< 0$ and $\eta^*(\eps):\R_+\rightarrow\R$, with $\eta^*(\eps)\xrightarrow[\eps\rightarrow0]{}\infty$, such that if
$$
f_0 \in H^1(m) \cap \Pp(\R^2)\quad \text{and}\quad\|f_0 - G \|_{H^1(m)} \le \eta^*(\eps),
$$
then there exists $C^*=C^*(f_0,\eps^*,\eps)>0$, such that
$$
\|f_t - G \|_{L^2(m)} \le C^* \, e^{\alpha^*\,t}, \quad  \forall \, t \ge 0,
$$
where $f_t$ is the solution to~\eqref{eq:FhNs} with initial condition $f_0$.
\end{theo}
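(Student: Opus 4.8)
The plan is to reduce the statement to a spectral/semigroup analysis of the operator obtained by linearising \eqref{eq:FhNs} around a stationary solution, combined with the \emph{a priori} bounds of Theorem~\ref{th:E&U&B}. Fix a stationary solution $G=G_\eps$ as in \eqref{eq:StatExist}, write $f_t=G+h_t$, and use that $B_\eps(\cdot\,)$ depends affinely on its scalar argument, that $\JJJ_{f_t}=\JJJ_G+\JJJ_{h_t}$, and that $G$ is stationary. This yields
\begin{equation*}
\partial_t h=\LL_\eps h+\eps\,\partial_v(\JJJ_h\,h),\qquad \LL_\eps h:=\partial_x(Ah)+\partial_v\big(B_\eps(\JJJ_G)\,h\big)+\partial^2_{vv}h+\eps\,\partial_v(\JJJ_h\,G).
\end{equation*}
Two structural features drive the argument: the remainder $\eps\,\partial_v(\JJJ_h h)$ is \emph{bilinear} in $h$ and carries the small factor $\eps$, and mass conservation forces $\int_{\R^2}h_t=0$ for all $t$, so $h_t$ stays in the zero-mass hyperplane. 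Note that at $\eps=0$ the nonlocal coupling disappears entirely and the equation is genuinely linear, which is what makes the perturbative scheme work.

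The core is the spectral study of the reference operator $\LL_0 h=\partial_x(Ah)+\partial_v(B_0 h)+\partial^2_{vv}h$, $B_0=v(v-\lambda)(v-1)+x+I_0$, a linear hypoelliptic Fokker--Planck operator. I would show that in each of the spaces $E\in\{L^1(m),L^2(m),H^1(m),H^2_v(m)\}$, $\LL_0$ generates a $C_0$-semigroup, that $0$ is a simple eigenvalue with eigenfunction $G_0>0$ and spectral projector $\Pi_0 g=\big(\int_{\R^2}g\big)G_0$ (the adjoint eigenfunction being $1$, by mass conservation), and that $\Sigma(\LL_0)\cap\{\re z>\alpha_0\}=\{0\}$ for some $\alpha_0<0$. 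The route is the usual splitting $\LL_0=\AAA+\BBB_0$ with $\AAA=M_0\,\chi_R$ a bounded truncation operator ($M_0,R$ large) and $\BBB_0=\LL_0-\AAA$: prove $\BBB_0$ is hypodissipative on each $E$ with rate $\alpha_0$ by Hérau--Villani-type hypocoercive estimates adapted to the exponential weight $m$ (the confining cubic drift absorbing the unbounded coefficients once $M_0,R$ are large); prove the gain of regularity/integrability of $\AAA S_{\BBB_0}(t)$ along the scale $L^1(m)\hookrightarrow L^2(m)\hookrightarrow H^1(m)\hookrightarrow H^2_v(m)$; then use the factorisation/enlargement identity $S_{\LL_0}=S_{\BBB_0}+S_{\BBB_0}*(\AAA S_{\LL_0})$, iterated, to transfer the decay and the spectral gap from the large space down to $H^1(m)$. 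Uniqueness of $G_0$, hence simplicity of $0$, follows from the strong maximum principle and irreducibility of the positivity semigroup $S_{\BBB_0}$.

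Next, $\LL_\eps$ is an $O(\eps)$ perturbation of $\LL_0$: $B_\eps(\JJJ_{G_\eps})-B_0=-\eps(v-\JJJ_{G_\eps})$ gives a first-order term that is $\BBB_0$-bounded with bound $O(\eps)$; the nonlocal term $\eps\,\partial_v(\JJJ_\cdot\,G_\eps)=\eps\,\JJJ_\cdot\,\partial_v G_\eps$ is a bounded, rank-one-type operator (bounded since $G_\eps\in H^2_v(m)$, close to $G_0$ by Theorem~\ref{th:StatExist}); and $\JJJ_{G_\eps}-\JJJ_{G_0}$, $G_\eps-G_0$ are $O(\eta(\eps))$. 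So for $\eps$ small the same splitting $\LL_\eps=\AAA+\BBB_\eps$ keeps $\BBB_\eps$ hypodissipative with regularising $\AAA S_{\BBB_\eps}$, the factorisation runs uniformly, and one obtains $\eps^*>0$, $\alpha^*<0$ such that for $\eps\in(0,\eps^*)$, $\Sigma(\LL_\eps)\cap\{\re z\ge\alpha^*\}=\{0\}$ with $0$ simple, eigenfunction $G_\eps$, projector $\Pi_\eps g=(\int g)G_\eps$, and
\begin{equation*}
\|S_{\LL_\eps}(t)(\mathrm{Id}-\Pi_\eps)g\|_{L^2(m)}\le C\,e^{\alpha^*t}\,\|g\|_{H^1(m)},\qquad t\ge 0,
\end{equation*}
plus a companion bound carrying a $\partial_v$-smoothing gain. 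Uniqueness then follows: if $G,\tilde G$ both solve \eqref{eq:StatExist} with $\eps\in(0,\eps^*)$, then $h=G-\tilde G$ has zero mass, so $h=(\mathrm{Id}-\Pi_\eps)h$, and $\LL_\eps h$ equals a quadratic term $-\eps\,\partial_v(\JJJ_h h)$ plus terms of size $O(\eps\,\eta(\eps)\|h\|)$; inverting $\LL_\eps$ on the zero-mass hyperplane gives $\|h\|_{L^2(m)}\le C\eps(\|h\|_{H^1(m)}+\eta(\eps))\|h\|_{H^1(m)}$, which for $\eps$ small forces $h\equiv 0$. For the nonlinear stability, given $f_0$ with $\|f_0-G\|_{H^1(m)}\le\eta^*(\eps)$ set $h_t=f_t-G$, note $\Pi_\eps h_t\equiv 0$, and use Duhamel
\begin{equation*}
h_t=S_{\LL_\eps}(t)h_0+\eps\int_0^t S_{\LL_\eps}(t-s)\,\partial_v\big(\JJJ_{h_s}\,h_s\big)\,ds.
\end{equation*}
Combining the decay estimate, the $\partial_v$-smoothing of the semigroup (so the nonlinear source is controlled by $\lesssim\|h_s\|_{L^2(m)}\|h_s\|_{H^1(m)}$), and the \emph{global} bound $\|h_t\|_{H^1(m)}\le C$ of \eqref{eq:H1m}, a Gronwall/bootstrap argument on $t\mapsto\sup_{s\le t}e^{-\alpha^*s}\|h_s\|_{L^2(m)}$ closes precisely because the nonlinear contribution carries the factor $\eps$ — which is also what allows the basin radius $\eta^*(\eps)$ to blow up as $\eps\to 0$ — yielding $\|f_t-G\|_{L^2(m)}\le C^*e^{\alpha^*t}$.

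The main obstacle is the linear spectral-gap analysis: establishing the hypocoercive decay of $S_{\BBB_\eps}$ in the exponentially weighted spaces in spite of the unbounded cubic drift, and — because diffusion acts only in $v$ — proving the gain of $x$-regularity needed to run the factorisation along the full scale $L^1(m)\to L^2(m)\to H^1(m)\to H^2_v(m)$, as well as checking that the nonlocal rank-one term $\eps\,\JJJ_\cdot\,\partial_v G_\eps$ does not disturb the Fredholm/spectral structure. Once these linear ingredients are secured, the uniqueness fixed-point step and the Duhamel bootstrap are comparatively routine, given Theorems~\ref{th:E&U&B} and~\ref{th:StatExist}.
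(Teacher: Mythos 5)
Your overall route coincides with the paper's: the same decomposition $\partial_t h=\LLL_\eps h+\eps\,\JJJ(h)\,\partial_v h$ around $G_\eps$, the same splitting $\LLL_\eps=\AA+\BB_\eps$ with a bounded truncation $\AA=N\chi_R$ and a hypodissipative, regularising remainder, Krein--Rutman for the positive semigroup of $\LLL_0$ at $\eps=0$, an $O(\eps)$ perturbation/factorisation argument to localise $\Sigma(\LLL_\eps)$ near the simple eigenvalue $0$ (equal to $0$ because $\LLL_\eps^*1=0$), and the nonlinear step by Duhamel, using the a priori bound \eqref{eq:H1m} to turn the quadratic term into a linear one with constant $O(\eps)$, then Gronwall with basin radius $\eta^*(\eps)\sim\eps^{-1/2}$. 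This is Sections 4 and 5 of the paper in the same order.

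There is, however, a genuine gap in your uniqueness step. The source term produced by the difference of two stationary solutions contains $\partial_v h$ multiplied by the unbounded factor $(v-\JJJ)$, so inverting the linearised operator returns an estimate whose right-hand side involves a \emph{stronger} norm than the left-hand side. Your inequality $\|h\|_{L^2(m)}\le C\eps\,(\|h\|_{H^1(m)}+\eta(\eps))\,\|h\|_{H^1(m)}$ has exactly this mismatch: since $\|h\|_{H^1(m)}$ is merely bounded and not controlled by $\|h\|_{L^2(m)}$, it yields at best $\|h\|_{L^2(m)}=O(\eps)$, not $h\equiv0$, so the claimed conclusion fails as written. The paper closes the loop with Lemma~\ref{lem:EstimL0perp}: for zero-mean $g$, the solution of $\LLL_0 f=g$ satisfies $\|f\|_{\VV}:=\|f\|_{L^2(Mm)}+\|\nabla_v f\|_{L^2(M^{1/2}m)}\le C_\VV\|g\|_{L^2(m)}$, i.e.\ the inverse gains precisely the moment and the $v$-derivative needed to bound $\|\partial_v\big((v-\JJJ(F_\eps))(F_\eps-G_\eps)+(\JJJ(F_\eps)-\JJJ(G_\eps))G_\eps\big)\|_{L^2(m)}$ by $\|F_\eps-G_\eps\|_{\VV}$; this produces the single-norm contraction $\|F_\eps-G_\eps\|_{\VV}\le\eps\,C\,\|F_\eps-G_\eps\|_{\VV}$ and hence uniqueness for $\eps$ small (Corollary~\ref{prop:WeakCregimeU}). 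Note that a plain $H^1_v(m)$ gain is not sufficient because of the factor $v$: the weighted gain is the point. A secondary remark: for the Duhamel/Gronwall step you should use the decay of $S_{\LLL_\eps}$ in $\BBB(L^2(m))$ (as in the paper's Theorem~\ref{theo:LepsExt}), not only the $H^1(m)\to L^2(m)$ bound you state, since the nonlinear source $\eps\,\JJJ(h_s)\,\partial_v h_s$ is controlled by the a priori estimates only in $L^2(m)$, not in $H^1(m)$.
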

\bigskip

\subsection{Other notations and definitions.}

We prepare to the demonstration of these results by introducing a few notations that will be used throughout the paper. For two given Banach spaces $(E,\|\cdot\|_E)$ and $(\EE,\|\cdot\|_\EE)$, we denote by $\BBB(E,\EE)$ the space of bounded linear operators from $E$ to $\EE$ and we denote by $\|\cdot\|_{\BBB(E,\EE)}$ the associated operator norm. The set of closed unbounded linear operators from $E$ to $\EE$ with dense domain is denoted by $\CCC(E,\EE)$. In the special case when $E=\EE$, we simply write $\BBB(E)=\BBB(E,E)$ and $\CCC(E)=\CCC(E,E)$.
\smallskip

For a given $\alpha\in\R$, we define the complex half plane
$$
 \Delta_\alpha:=\{z\in\C,\quad \re(z)>\alpha\}.
$$
For a given Banach space $X$ and $\Lambda\in\CCC(X)$ which generates a semigroup, we denote this associated semigroup by $(S_\Lambda(t),\,t\geq0)$, by $D(\Lambda)$ its domain, by $N(\Lambda)$ its null space, by $R(\Lambda)$ its range, and by $\Sigma(\Lambda)$ its spectrum. On the resolvent set $\rho(\Lambda)=\C\setminus\Sigma(\Lambda)$ we may define the resolvent operator $\RR(\Lambda)$ by 
$$
 \forall\,z\in\C,\qquad \RR_\Lambda(z):=(\Lambda-z)^{-1}.
$$
Moreover, $\RR_\Lambda(z)\in\BBB(X)$ and has range equal to $D(\Lambda)$. We recall that $\xi\in\Sigma(\Lambda)$ is called an eigenvalue of $\Lambda$ if $N(\Lambda-\xi)\neq\{0\}$, and it called an isolated eigenvalue if there exists $r>0$ such that
$$
 \Sigma(\Lambda)\cap\{z\in\C,\,|z-\xi|< r\}=\{\xi\}.
$$

Since the notion of convolution of semigroups will be required, we recall it here. Let us consider some Banach spaces $X_1,X_2$ and $X_3$ and two given functions
$$
 S_1\in L^1([0,\infty);\BBB(X_1,X_2))\quad\text{and}\quad S_2\in L^1([0,\infty);\BBB(X_2,X_3)),
$$
one can define $S_2\ast S_1\in L^1([0,\infty);\BBB(X_1,X_3))$ by
$$
 (S_2\ast S_1)(t):=\int_0^t S_2(t-s)S_1(t)\,ds,\qquad \forall\,t\geq0.
$$
In the special case $S_1=S_2$ and $X_1=X_2=X_3$, $S^{(*n)}$ is defined recursively by $S^{(\ast 1)}=S$ and $S^{(\ast n)}=S\ast S^{(\ast(n-1))}$ for $n>1$. Equipped with this definition, we state the
\begin{prop}\label{prop:semigroupGeneral}
 Let $X,Y$ be two Banach spaces such that $Y\subset X$. Let us consider $S(t)$ a continuous semigroup such that for all $t\geq0$
 \begin{equation}\label{eq:genHypo}\nonumber
  \|S(t)\|_{\BBB(\XX)}\leq C_{\XX}\, e^{\alpha^* t},\quad \XX\in\{X,Y\},
 \end{equation}
 for some $\alpha^*\in\R$ and positive constants $C_X$ and $C_Y$. If there exists $\Theta>0$ and $C_{X,Y}>0$ such that
 \begin{equation}\label{eq:genReg}\nonumber
  \|S(t) f\|_Y\,\leq\, C_{X,Y}\,t^{-\Theta}\,e^{\alpha^*t}\,\|f\|_X ,\quad \forall\, f\in  X,\,t\in(0,1],
 \end{equation}
 then, there exists $n\in\N$, and a polynomial $p_n(t)$ such that
 \begin{equation}\label{eq:convSn}
  \|S\,^{(*n)}(t) f\|_Y\,\leq\, p_n(t)\, e^{\alpha^*t}\|f\|_X,\qquad\forall\, f\in X,\,t>0.
 \end{equation}
 In particular, for any $\alpha>\alpha^*$, it holds
 $$
  \|S\,^{(*n)}(t) f\|_Y\,\leq\, C_{\alpha,n} e^{\alpha t}\|f\|_X,\quad\forall\, f\in X,\,t>0,
 $$
 for some positive constant $C_{\alpha,n}$.
\end{prop}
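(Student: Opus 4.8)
The plan is to exploit that $S$ is a genuine semigroup, which makes the iterated time-convolution $S^{(*n)}$ completely explicit. Since $S(t-s)S(s)=S(t)$ for $0\le s\le t$, one has $S^{(*2)}(t)=\int_0^tS(t-s)S(s)\,ds=t\,S(t)$, and then, by an immediate induction on the recursion $S^{(*n)}=S*S^{(*(n-1))}$,
\[
S^{(*n)}(t)=\frac{t^{\,n-1}}{(n-1)!}\,S(t)\qquad\text{for all }t\ge0 .
\]
Hence $\|S^{(*n)}(t)\|_{\BBB(X,Y)}=\tfrac{t^{\,n-1}}{(n-1)!}\,\|S(t)\|_{\BBB(X,Y)}$, and the whole statement reduces to controlling $\|S(t)\|_{\BBB(X,Y)}$ on all of $(0,\infty)$, not merely on $(0,1]$.

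The one genuine point is therefore to propagate the smoothing hypothesis past $t=1$. For $t\ge1$ I would write $S(t)=S(t-1)\,S(1)$ and combine $\|S(1)\|_{\BBB(X,Y)}\le C_{X,Y}\,e^{\alpha^*}$ (the hypothesis at $t=1$) with $\|S(t-1)\|_{\BBB(Y)}\le C_Y\,e^{\alpha^*(t-1)}$, which gives $\|S(t)\|_{\BBB(X,Y)}\le C_YC_{X,Y}\,e^{\alpha^* t}$; together with the hypothesis on $(0,1]$ this yields $\|S(t)\|_{\BBB(X,Y)}\le C_0\,(1+t^{-\Theta})\,e^{\alpha^* t}$ for every $t>0$, with $C_0$ depending only on $C_{X,Y},C_Y,\alpha^*$. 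Inserting this into the explicit formula and choosing any integer $n$ with $n-1\ge\Theta$,
\[
\|S^{(*n)}(t)\|_{\BBB(X,Y)}\le\frac{C_0}{(n-1)!}\big(t^{\,n-1}+t^{\,n-1-\Theta}\big)\,e^{\alpha^* t}\le p_n(t)\,e^{\alpha^* t},
\]
since $t^{\,n-1-\Theta}\le1$ on $(0,1]$ and $t^{\,n-1-\Theta}\le t^{\,n-1}$ on $[1,\infty)$, so one may take the genuine polynomial $p_n(t):=\tfrac{2C_0}{(n-1)!}\,(1+t^{\,n-1})$; this is exactly \eqref{eq:convSn}. The final assertion follows at once: for $\alpha>\alpha^*$ the map $t\mapsto p_n(t)\,e^{(\alpha^*-\alpha)t}$ is bounded on $[0,\infty)$, hence $\|S^{(*n)}(t)f\|_Y\le C_{\alpha,n}\,e^{\alpha t}\|f\|_X$ with $C_{\alpha,n}:=\sup_{t\ge0}p_n(t)e^{(\alpha^*-\alpha)t}$.

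Because $S$ is a semigroup there is no serious obstacle here: the identity $S^{(*n)}(t)=\tfrac{t^{\,n-1}}{(n-1)!}S(t)$ carries the argument, and it even produces the explicit count $n=\lceil\Theta\rceil+1$. I would nonetheless also indicate a proof not relying on this identity, since statements of this type are usually applied to a time-dependent operator family that is not itself a semigroup (as in iterated Duhamel expansions). That alternative is an induction on the number of convolutions: splitting $\int_0^t=\int_0^{t/2}+\int_{t/2}^t$ and estimating each half by routing around the singularity — the $X\to X$, resp. $Y\to Y$, bound on one factor and the $X\to Y$ smoothing bound on the other — shows that convolving two families whose $X\to Y$ operator norms blow up like $t^{-\beta}$ with $\beta>0$ produces a family whose norm blows up only like $t^{-(\beta-1)}$; after $\lceil\Theta\rceil$ such doublings the exponent is $\le0$ and the claim follows. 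In that variant the only work is to keep track of the polynomial prefactors and to handle the range $t\ge1$ separately, both routine.
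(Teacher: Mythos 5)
Your main argument is correct for the statement as literally phrased, but it takes a genuinely different route from the paper's proof. You exploit the semigroup relation $S(t-s)S(s)=S(t)$ to obtain the closed formula $S^{(*n)}(t)=\tfrac{t^{n-1}}{(n-1)!}\,S(t)$, reduce the whole question to bounding $\|S(t)\|_{\BBB(X,Y)}$ for $t\ge1$ via $S(t)=S(t-1)S(1)$, and conclude with $n=\lceil\Theta\rceil+1$; each step checks out. The paper, by contrast, never uses the semigroup property beyond the two growth/smoothing bounds: its proof is precisely the induction you only sketch as an ``alternative'', namely the $\int_0^{t/2}+\int_{t/2}^t$ splitting showing that each additional convolution lowers the singularity exponent $\Theta$ by one, together with a separate treatment of $t\ge1$. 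The difference is not cosmetic, because of how the proposition is used later: in Lemma~\ref{lemma:RegSB2} it is invoked for the family $t\mapsto\AA S_{\BB_\eps}(t)$, which satisfies the two bounds but is \emph{not} a semigroup ($\AA$ does not commute with $S_{\BB_\eps}$), so the identity $S^{(*n)}(t)=\tfrac{t^{n-1}}{(n-1)!}S(t)$ is unavailable there and your main route would not yield \eqref{eq:consequence}. In short, your identity-based proof buys brevity and an explicit, essentially optimal $n$ under the stated hypothesis, while the convolution-splitting induction (the paper's argument, and your fallback) buys the robustness the application actually requires; to serve the paper you should either promote your alternative to the main proof or restate the proposition for operator families satisfying the two bounds rather than for semigroups.
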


This general result has been already established and used in~\cite{GMM} and~\cite{MM*}, but we give an alternative, and somehow simpler, proof of it.

\begin{proof}
Let us start by noticing that for $\XX\in\{X,Y\}$, if
\begin{equation}\label{eq:inducHyp}
 \|S\,^{(*n)}(t) f\|_{\XX} \,\leq\, p_n^\XX(t)\,e^{\alpha^*t}\|f\|_{\XX},\quad\forall\,t\geq0,
\end{equation}
for $n\in\N$ and $p_n^\XX(t)$ a polynomial, then
$$
 \|S\,^{(*(n+1))}(t) f\|_{\XX}
 \,\leq\,\int_0^t \|S(t-s)\,S\,^{(*n)}\,(s) f\|_{\XX}\,ds
  \,\leq\,p_{n+1}^\XX(t)\,e^{\alpha^*t}\|f\|_{\XX},
$$
for $p_{n+1}^\XX=C_\XX\int_0^tp_n^\XX(s)\,ds$. So, by an immediate induction argument we get~\eqref{eq:inducHyp} for any $n\geq1$ and $p_n^\XX(t):=\frac{C_\XX^n t^{n-1}}{(n-1)!}$.
\smallskip

Let us now fix $t\in(0,1]$ and, without lost of generality, assume that $\Theta\notin\N$. In that case, if
\begin{equation}\label{eq:regnsmall}
 \|S\,^{(*n)}(t)f\|_{Y}\leq C_nt^{-(\Theta-n+1)}e^{\alpha^*t}\|f\|_{X},\quad\forall\,t\in(0,1],
\end{equation}
for some $n\in\N$ and $C_n$ a positive constant, then
\begin{eqnarray*}
 \|S^{(\ast (n+1))}(t) f\|_{Y} &\leq& \int_0^{t/2} \|S(t-s)S^{(\ast n)}(s) f\|_{Y}\,ds+\int_{t/2}^t \|S(t-s)S^{(\ast n)}(s) f\|_{Y}\,ds
 \\
 &\leq& \int_0^{t/2} C_{X,Y}(t-s)^{-\Theta}e^{\alpha^*(t-s)}\|S^{(\ast n)}(s) f\|_{X}\,ds+\int_{t/2}^t C_Ye^{\alpha^*(t-s)}\|S^{(\ast n)}(s) f\|_{Y}\,ds
 \\
  &\leq& \int_0^{t/2} C_{X,Y}(t-s)^{-\Theta}e^{\alpha^*t} p_n^X(s)\|f\|_{X}\,ds +\int_{t/2}^t C_Ye^{\alpha^*t} C_n s^{-(\Theta-n+1)}\| f\|_{X}\,ds
 \\
  &\leq& \frac{C_{X,Y}C_{X}^n}{(n-1)!}e^{\alpha^*t}\|f\|_{X}\int_0^{t/2} (t-s)^{-\Theta} s^{n-1}\,ds +C_YC_ne^{\alpha^*t}\| f\|_{X}\int_{t/2}^t  s^{-(\Theta-n+1)}\,ds
  \\
  &\leq& C_{n+1}t^{-(\Theta-n)}e^{\alpha^* t}\|f\|_X,
\end{eqnarray*}
for some $C_{n+1}$ depending only on $C_X,C_Y,C_{X,Y}$ and $C_n$. Once again, by an induction argument, we get~\eqref{eq:regnsmall}. Moreover, as soon as $\Theta-n+1>0$, inequality~\eqref{eq:convSn} holds.

Finally, to get the conclusion in the case $t>1$, it suffices to notice that
$$
 \|S(t)f\|_{Y}\,\,\leq\,\, C_YC_{X,Y}(t-\lfloor t\rfloor)^{-\Theta} e^{\alpha^*t}\|f\|_X,
$$
where $\lfloor t\rfloor$ is the largest integer smaller than $t$. A similar argument that the one used for $t\in(0,1]$, allows us to find a polynomial $p_n$ such that~\eqref{eq:convSn} still holds when $t>1$.
\end{proof}
\smallskip

Finally, we recall the abstract notion of \emph{hypodissipative operators}:
\begin{defin}
 Considering a Banach space $(X,\|\cdot\|_X)$, a real number $\alpha\in\R$ and an operator $\Lambda\in\CCC(X)$, $(\Lambda-\alpha)$ is said to be hypodissipative on $X$ if there exists some norm $\vertiii{\cdot}_X$ on $X$ equivalent to the usual norm $\|\cdot\|_X$ such that
$$
 \forall\,f\in D(\Lambda),\quad\exists\, \phi\in F(f)\quad\text{such that}\quad \re\langle\phi,(\Lambda-\alpha)f\rangle\leq0,
$$
where $\langle\cdot,\cdot\rangle$ is the duality bracket in $X$ and $X^*$ and $F(f)\subset X^*$ is the dual set of $f$ defined by
$$
 F(f)=F_{\vertiii{\cdot}_X}(f):=\{\phi\in X^*,\quad\langle \phi,f\rangle=\vertiii{f}_X^2=\vertiii{\phi}_{X^*}^2\}.
$$
\end{defin}
One classically sees (we refer to for example~\cite[Subsection 2.3]{GMM}) that when $\Lambda$ is the generator of a semigroup $S_\Lambda$, for given $\alpha\in\R$ and $C>0$ constants, the following assertions are equivalent:
\begin{itemize}
\item[(a)] $(\Lambda-\alpha)$ is hypodissipative;
\item[(b)] the semigroup satisfies the growth estimate $\|S_\Lambda (t)\|_{\BBB(X)}\leq C e^{\alpha t},$ $t\geq0$;
\end{itemize}

%
%
%

\section{Analysis of the nonlinear evolution equation} 
\label{sec:estim}
\setcounter{equation}{0}
\setcounter{theo}{0}

This section is concerned with the analysis of the nonlinear evolution equation. We shall prove existence and uniqueness of solutions, and provide some \emph{a priori} estimates on their behavior. 

Before going into further details, let us remark that for $\JJJ$ fixed, the operator $Q_\eps[\JJJ]$ is linear and writes
$$
 Q_\eps[\JJJ]\, f \,=\, \partial_x(Af)+\partial_v(B_\eps(\JJJ)\,f)+\partial^2_{vv}f.
$$
In particular, for $g\in H^2_v(m)$ we have
$$
 \int_{\R^2} (Q_\eps[\JJJ]\, f) \, g \,dvdx \,=\, -\int_{\R^2} f\, \big(A\,\partial_x g + B_\eps(\JJJ)\,\partial_v g-\partial^2_{vv} g\big) \,dvdx,
$$
therefore, it is natural to define
$$
 Q_\eps^*[\JJJ]\,g \, := \, -A\,\partial_x g - B_\eps(\JJJ)\,\partial_v g+\partial^2_{vv} g.
$$

\subsection{A priori bounds.}
We now fix $\eps_0>0$. The a priori estimates that follow are uniform in $\eps$ in the bounded connectivity regime $\eps\in[0,\eps_0)$, i.e., they involve constants that do not depend on $\eps$.

\begin{lem}\label{lemma:L1M} For $f_t$ solution to~\eqref{eq:FhNs} with $f_0 \in L^1(M) \cap \Pp(\R^2)$, estimate \eqref{eq:L1H} holds. Moreover, there exists $C_0'>0$ depending on $a,b,\lambda,I_0,\eps_0$ and $\|f_0\|_{L^1(M)}$ such that
\beqn\label{eq:mufbdd}
\sup_{t\geq0}|\JJJ(f _t)|<C_0'.
\eeqn
\end{lem}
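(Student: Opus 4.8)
The plan is to test the equation against the weight $M$ and close a Grönwall inequality. Observe first that \eqref{eq:mufbdd} is an immediate consequence of \eqref{eq:L1H}: since $f_t\ge 0$ and $\int_{\R^2} f_t = 1$, Cauchy--Schwarz gives $|\JJJ(f_t)|\le\big(\int_{\R^2}v^2 f_t\,dx\,dv\big)^{1/2}\le\big(2\,\|f_t\|_{L^1(M)}\big)^{1/2}$, which is controlled by \eqref{eq:L1H} with $C_0':=\big(2\max(C_0,\|f_0\|_{L^1(M)})\big)^{1/2}$. So the real task is \eqref{eq:L1H}.

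Set $u(t):=\|f_t\|_{L^1(M)}=\int_{\R^2}M f_t$. Taking (formally) $\varphi=M$ in \eqref{eq:Solution} and using $\partial_x M=x$, $\partial_v M=v$, $\partial_{vv}^2 M=1$ together with $\JJJ_{f_t}=\int_{\R^2}v f_t$, one integration by parts gives
$$
u'(t)=\int_{\R^2}\!\big[1-Ax-B_\eps(\JJJ_{f_t})v\big]f_t = 1-a\!\int x^2 f_t+(b-1)\!\int xv\,f_t-\!\int v^4 f_t+(1+\lambda)\!\int v^3 f_t-(\lambda-\eps)\!\int v^2 f_t-\eps\,\JJJ_{f_t}^{\,2}-I_0\,\JJJ_{f_t},
$$
all integrals being taken against $f_t\,dx\,dv$. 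The point is that $-a\int x^2 f_t$ and $-\int v^4 f_t$ are coercive, that $-\eps\,\JJJ_{f_t}^{\,2}\le 0$, and that every remaining term is absorbed by these two via Young's inequality, uniformly for $\eps\in[0,\eps_0)$: one has $(b-1)\int xv\,f_t\le\frac a2\int x^2 f_t+K\int v^2 f_t$, and then $|v|^3$, $v^2$ and $|\JJJ_{f_t}|\le(\int v^2 f_t)^{1/2}$ are each bounded by an arbitrarily small multiple of $v^4$ plus a constant depending only on $a,b,\lambda,I_0,\eps_0$ (recall $\int f_t=1$). Choosing the small parameters consistently yields $u'(t)\le-\frac a2\int x^2 f_t-\frac14\int v^4 f_t+C$, and converting the $v^4$-term into a $v^2$-term via $v^2\le\frac12 v^4+\frac12$ gives the closed inequality $u'(t)\le-c\,u(t)+C''$ with $c:=\min(a,1)>0$ and $C,C''$ depending only on $a,b,\lambda,I_0,\eps_0$. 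Grönwall's lemma then produces $u(t)\le e^{-ct}u(0)+\frac{C''}{c}(1-e^{-ct})\le\max\big(u(0),C''/c\big)$, i.e.\ \eqref{eq:L1H} with $C_0:=C''/c$.

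The one genuine point requiring care — and the step I expect to be the main (if routine) obstacle — is that $M\notin C^\infty_c(\R^2)$, so the computation above must be justified by truncation. I would fix $\theta\in C^\infty_c(\R)$ with $\mathbf 1_{[0,1]}\le\theta\le\mathbf 1_{[0,2]}$, put $\varphi_R:=M\,\theta\big((x^2+v^2)/R^2\big)\in C^\infty_c(\R^2)$, insert $\varphi_R$ in \eqref{eq:Solution}, and let $R\to\infty$. The principal part converges to the identity above, while the error terms coming from the derivatives of $\theta$ are supported in $\{R\le x^2+v^2\le 2R\}$ and, since each such derivative carries a negative power of $R$, are dominated by $C\,(1+x^2+v^2)^2 f_t\,\mathbf 1_{\{x^2+v^2\ge R\}}$; this is exactly where the regularity $f\in C([0,\infty);L^1(M^2))$ from Definition~\ref{def:One} is used, and by dominated convergence the errors vanish as $R\to\infty$, locally uniformly in $t$. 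One also uses that $s\mapsto\int_{\R^2}P f_s$ is continuous for every polynomial $P$ of degree at most $4$ (again by $f\in C([0,\infty);L^1(M^2))$), which shows $u\in C^1$ and legitimises differentiating; passing to the limit in the integrated identity then yields the differential inequality above, and the proof is complete.
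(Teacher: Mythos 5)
Your argument is correct and follows essentially the same route as the paper: test the equation against the weight $M$, use the cubic structure of $B_\eps$ and the linear structure of $A$ to produce coercive $-\int x^2 f_t$ and $-\int v^4 f_t$ terms that absorb the lower-order ones via Young's inequality uniformly in $\eps\in[0,\eps_0)$, close with Gr\"onwall, and deduce \eqref{eq:mufbdd} from \eqref{eq:L1H} by Cauchy--Schwarz. The only difference is that you additionally justify the formal choice $\varphi=M$ by a compactly supported truncation using $f\in C([0,\infty);L^1(M^2))$, a step the paper performs only formally; this is a welcome but not essential refinement.
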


\begin{proof}
We first apply Cauchy-Schwartz's inequality to find
 \begin{equation}\label{aux1}
 |\JJJ(f)| \le \int_{\R^2} |v| \, f \le\Bigl(\int_{\R^2} f\Bigr)^{1/2}\Bigl(\int_{\R^2}v^2 \,  f\Bigr)^{1/2}=\Bigl(\int_{\R^2}v^2 \,  f\Bigr)^{1/2},
 \end{equation}
 for any $f\in\Pp(\R^2)\cap L^1(v^2)$. Now, for $f_t$ a solution to~\eqref{eq:FhNs}, we have
  \begin{eqnarray*}
  \frac{d}{dt}\int_{\R^2} f_t \,M &=& \int_{\R^2} (Q_\eps[\JJJ_{f_t}]\,f_t)\,M =\int_{\R^2}f_t \,(Q_\eps^*[\JJJ_{f_t}]M)\\
  &=&\int_{\R^2}(1-Ax-B_\eps(\JJJ_{f_t})v)f_t.
 \end{eqnarray*}
  Using the definition of $A$ and $B_\eps$, and then~\eqref{aux1}, we get
 \begin{eqnarray*}
  \frac{d}{dt}\int_{\R^2}f_t M\,dxdv &\leq&-\int_{\R^2}\Big(-1+ax^2-bxv+v^2(v-\lambda)(v-1)-\eps v^2+xv+I_0v\Big)f_t +\eps \JJJ(f_t)^2
  \\
  &\leq&K_1-K_2\int_{\R^2}(v^4+x^2)f_t +\eps \int_{\R^2}v^2\,f_t
  \\
  &\leq& K_1 -  K_2 \int_{\R^2}f_t \, M\,dxdv,
 \end{eqnarray*}
where $K_1$ and $K_2$ are generic constans depending only on $a,b,\lambda,I_0$ and $\eps_0$. Using Gronwall's lemma we get~\eqref{eq:L1H} for some $C_0>0$. Finally, coming back to~\eqref{aux1}, we get
\begin{equation}\nonumber
 |\JJJ(f_t)|^2 \,\,\leq\,\, \int_{\R^2}v^2 \,  f_t \,\,\leq\,\, 2\,\|f_t\|_{L^1(M)}\,\leq\,\, 2\,\max ( C_0, \| f_0 \|_{L^1(M)} ),
 \end{equation}
which is nothing but~\eqref{eq:mufbdd}. 
\end{proof}

\begin{lem} For any $\JJJ \in \R$ fixed, there exist some constants $K_1,K_2>0$ depending on $a,b,\lambda,I_0,\JJJ,\kappa$ and $\eps_0$ such that
\beqn\label{eq:BdissipL1m}
\int_{\R^2} Q_\eps[\JJJ]\,f\cdot \sign (f)\,m \,\le\, K_1\|f\|_{L^1(\R^2)} - K_2\| f \|_{L^1(m)}, \quad\forall\,f \in L^1(m).
\eeqn
\end{lem}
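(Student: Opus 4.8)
The plan is to run a weighted $L^1$ dissipativity estimate by (formally) multiplying $Q_\eps[\JJJ]f$ by $\sign(f)\,m$ and integrating over $\R^2$. Using $\sign(f)\,\partial_x f = \partial_x|f|$ and $\sign(f)\,\partial_v f = \partial_v|f|$ for the transport terms, together with Kato's inequality $\sign(f)\,\partial_{vv}^2 f \le \partial_{vv}^2 |f|$ for the diffusion term, and then moving every derivative back onto the weight by integration by parts, one checks that the contributions of $\partial_x A$ and $\partial_v B_\eps(\JJJ)$ cancel, so that
\[
\int_{\R^2} Q_\eps[\JJJ]\,f \cdot \sign(f)\,m \;\le\; \int_{\R^2} |f|\,\Psi\,m ,
\]
where, using $\partial_x m = \kappa x\,m$, $\partial_v m = \kappa v\,m$ and $\partial_{vv}^2 m = (\kappa+\kappa^2 v^2)\,m$, the function $\Psi$ depends only on the drift coefficients and the weight:
\[
\Psi \;=\; -\,\kappa\,x\,A \;-\; \kappa\,v\,B_\eps(\JJJ) \;+\; \kappa \;+\; \kappa^2 v^2 .
\]

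Next I would make the growth of $\Psi$ explicit. Substituting $A = ax-bv$ and $B_\eps(\JJJ) = v(v-\lambda)(v-1) + x -\eps(v-\JJJ)+I_0$, the polynomial $\Psi$ has top-degree part $-\kappa a\,x^2 - \kappa\,v^4$ (the quartic coming from $-\kappa v\cdot v(v-\lambda)(v-1)$), all other monomials being of strictly lower degree; in particular the cross terms in $xv$ are absorbed into $\tfrac{\kappa a}{2}x^2 + C v^2$ by Young's inequality, and the term $\kappa\eps v^2$ is bounded by $\kappa\eps_0 v^2$, uniformly for $\eps\in[0,\eps_0)$. Since $a>0$ and $\kappa>0$, it follows that $\Psi(x,v)\to -\infty$ as $|(x,v)|\to\infty$. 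Fixing any $K_2>0$, choose $R>0$ with $\Psi\le -K_2$ on $\{|(x,v)|>R\}$, and set $M_0:=\sup_{B_R}|\Psi|$, $m_0:=\sup_{B_R} m$. Splitting the integral over $B_R$ and its complement and using $\int_{B_R^c}|f|\,m \ge \|f\|_{L^1(m)} - m_0\,\|f\|_{L^1(\R^2)}$, one gets
\[
\int_{\R^2}|f|\,\Psi\,m \;\le\; M_0 m_0\,\|f\|_{L^1(\R^2)} \;-\; K_2\!\int_{B_R^c}\!|f|\,m \;\le\; (M_0+K_2)\,m_0\,\|f\|_{L^1(\R^2)} \;-\; K_2\,\|f\|_{L^1(m)},
\]
which is the desired bound with $K_1:=(M_0+K_2)\,m_0$.

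Finally, to justify the formal step for a general $f\in L^1(m)$, I would replace $\sign$ by a smooth nondecreasing bounded approximation $\beta_\delta$ with $\beta_\delta(s)\to\sign(s)$ and $s\,\beta_\delta(s)\to|s|$, regularize $f$ by mollification, carry out the integration by parts (the convexity of the primitive of $\beta_\delta$ producing the Kato-type sign for the second-order term), and let $\delta\to0$; since $m$ dominates every polynomial, all integrals converge and the boundary terms at infinity vanish. The computation is otherwise routine, and the only place requiring care is the bookkeeping in the previous paragraph, namely that the weight-generated polynomial $\Psi$ has a top-degree part of definite negative sign — which is exactly where $a>0$ and the positivity of the leading coefficient of the cubic in $B$ are used, and which is why no smallness of $\eps$ is needed for this bound.
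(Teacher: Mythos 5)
Your proof is correct and follows essentially the same route as the paper: Kato's inequality plus integration by parts onto the exponential weight, which gives $\int_{\R^2}|f|\,Q_\eps^*[\JJJ]\,m$ with $Q_\eps^*[\JJJ]\,m/m=-\kappa\big(Ax+B_\eps v-(1+\kappa v^2)\big)$, a polynomial whose leading part $-\kappa(ax^2+v^4)$ is negative, so the compact/non-compact splitting yields the stated bound. The paper leaves the final splitting and the justification of the $\sign$ manipulation implicit ("follows directly"), and your explicit bookkeeping is exactly what is meant there.
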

\begin{proof}
Since $\JJJ\in\R$ is now fixed, for simplicity of notation, we drop the dependence on this parameter. Using Kato's inequality, $\sign(f)\partial_{vv}^2 f\leq\partial_{vv}^2|f|$, we have
\begin{eqnarray*}
 \int_{\R^2}Q_\eps\,f\cdot \sign (f)\,m &\leq& \int_{\R^2}|f|\,Q_\eps^*\,m\\
 &=& -\kappa\int_{\R^2}|f|\,\big(Ax+B_\eps v-(1+\kappa v^2)\big)m,
\end{eqnarray*}
thus
\begin{eqnarray*}
\int_{\R^2}Q_\eps\,f\cdot \sign (f)\,m \,\leq\,-\int_{\R^2}p(x,v)\,|f|\,m,
\end{eqnarray*}
 where $p(x,v)$ is a polynomial on $x$ and $v$ with leading term $v^4+x^2$. Inequality~\eqref{eq:BdissipL1m} follows directly.
 \end{proof}
  
\begin{cor} Estimate \eqref{eq:L1m} holds.
\end{cor}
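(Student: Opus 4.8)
The plan is to derive the bound \eqref{eq:L1m} on $\|f_t\|_{L^1(m)}$ by combining the linear dissipativity estimate \eqref{eq:BdissipL1m} just proved with the uniform control on the nonlinear coupling $\JJJ(f_t)$ furnished by Lemma~\ref{lemma:L1M}. The subtlety is that \eqref{eq:BdissipL1m} holds only for $\JJJ$ \emph{fixed}, with the constants $K_1,K_2$ depending on that value, whereas along a solution the coupling $\JJJ(f_t)$ varies in time; so the first task is to make the estimate genuinely uniform over the relevant range of $\JJJ$.

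First I would invoke \eqref{eq:mufbdd}: since $f_0\in L^1(m)\cap\Pp(\R^2)$ implies in particular $f_0\in L^1(M)\cap\Pp(\R^2)$ (as $M\le C\,m$ pointwise for the exponential weight, or directly since $L^1(m)\hookrightarrow L^1(M)$), Lemma~\ref{lemma:L1M} gives a constant $C_0'$, depending only on $a,b,\lambda,I_0,\eps_0$ and $\|f_0\|_{L^1(M)}$, with $\sup_{t\ge0}|\JJJ(f_t)|<C_0'$. Then I would apply the previous lemma with the parameter $\JJJ$ ranging over the compact interval $[-C_0',C_0']$: inspecting its proof, the polynomial $p(x,v)=Ax+B_\eps(\JJJ)v-(1+\kappa v^2)$ has leading term $v^4+x^2$ with coefficients that are continuous (indeed affine) in $\JJJ$, so the constants $K_1,K_2$ can be chosen uniformly for $\JJJ\in[-C_0',C_0']$, keeping $K_2>0$. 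Hence there exist $K_1,K_2>0$, depending only on $a,b,\lambda,I_0,\eps_0,\kappa$ and $\|f_0\|_{L^1(M)}$, such that
\begin{equation}\label{eq:BdissipUnif}
\int_{\R^2} Q_\eps[\JJJ(f_t)]\,f\cdot\sign(f)\,m\,\le\,K_1\|f\|_{L^1(\R^2)}-K_2\|f\|_{L^1(m)},\qquad\forall\,t\ge0,\ \forall\,f\in L^1(m).
\end{equation}

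Next I would run the differential inequality. Since $f_t$ solves \eqref{eq:FhNs}, a standard renormalization argument (Kato's inequality, as already used above, together with the fact that $f_t$ is a weak solution in the sense of Definition~\ref{def:One} with the required integrability) gives, in the distributional sense,
\begin{equation}\nonumber
\frac{d}{dt}\|f_t\|_{L^1(m)}=\frac{d}{dt}\int_{\R^2}|f_t|\,m=\int_{\R^2}Q_\eps[\JJJ(f_t)]\,f_t\cdot\sign(f_t)\,m\le K_1\|f_t\|_{L^1(\R^2)}-K_2\|f_t\|_{L^1(m)}.
\end{equation}
Because $f_t\ge0$ and is a probability density, $\|f_t\|_{L^1(\R^2)}=1$, so $\tfrac{d}{dt}\|f_t\|_{L^1(m)}\le K_1-K_2\|f_t\|_{L^1(m)}$, and Gronwall's lemma yields $\|f_t\|_{L^1(m)}\le\max(K_1/K_2,\|f_0\|_{L^1(m)})$, which is \eqref{eq:L1m} with $C_1:=K_1/K_2$. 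I would then note that $C_1$ depends only on the fixed parameters and on $\|f_0\|_{L^1(M)}$ (through $C_0'$), as claimed in Theorem~\ref{th:E&U&B}.

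The main obstacle — really the only non-bookkeeping point — is the uniformity of \eqref{eq:BdissipL1m} in the coupling parameter over the a priori range $[-C_0',C_0']$, i.e. checking that the leading-order negativity of the polynomial $p(x,v)$ is not destroyed as $\JJJ$ varies; this is where \eqref{eq:mufbdd} is essential, since without an a priori bound on $\JJJ(f_t)$ one could not close the loop. A secondary technical point is justifying the chain rule for $t\mapsto\|f_t\|_{L^1(m)}$ at the level of weak solutions; this is handled by the usual regularization-in-the-renormalization procedure and the continuity $f\in C([0,\infty);L^1(M^2))$ guaranteed by Definition~\ref{def:One}, together with the $L^1(m)$ a priori control being derived first on smooth approximations and then passed to the limit.
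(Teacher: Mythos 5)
Your proof is correct and follows essentially the same route as the paper: use the a priori bound \eqref{eq:mufbdd} on $\JJJ(f_t)$ to make the constants in \eqref{eq:BdissipL1m} uniform (the paper simply states that $K_1,K_2$ depend on $C_0'$, which is exactly the uniformity point you spell out), then exploit the unit mass to reduce the differential inequality to $\frac{d}{dt}\|f_t\|_{L^1(m)}\le K_1-K_2\|f_t\|_{L^1(m)}$ and conclude by Gronwall. The additional remarks on justifying the chain rule at the level of weak solutions are fine but not a divergence from the paper's argument.
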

\begin{proof}
 For $f_t$ solution to~\eqref{eq:FhNs}, inequality~\eqref{eq:mufbdd} tells us that $|\JJJ(f_t)|\leq C_0'$.  Moreover, since the mass is unitary for almost any $t\geq0$, it holds
 $$
  \frac{d}{dt}\int_{\R^2}|f_t|\,m \,=\, \int_{\R^2}Q_\eps[\JJJ_{f_t}]\,f_t\cdot\sign(f_t)\,m \,\leq\, K_1-K_2 \int_{\R^2}|f_t|\,m,
 $$
 where $K_1$ and $K_2$ depend only on $a,b,\lambda,I,\eps_0$ and $C_0'$. Finally, integrating this last inequality, we get
 $$
  \|f_t\|_{L^1(m)}\,\,\leq\,\,\max\big(C_1,\|f_0\|_{L^1(m)}\big),\qquad\forall\, t\geq0,
 $$
 for some positive constant $C_1$ depending only on the parameters of the system, $\eps_0$ and $C_0'$.
\end{proof}

Now we analyse the $H^1(m)$ and $H^2_v(m)$ norms of the solutions to~\eqref{eq:FhNs}, in particular, we prove a priori bounds~\eqref{eq:H1m} and~\eqref{eq:H2vm}. Since the equation is hypodissipative, we used the ideas of ``twisted spaces'' and the Nash-Villani's technique (see e.g.~\cite{MR2562709}) to control the $L^2(\R^2)$ contributions in function of the $L^1(\R^2)$ norm.

\begin{lem} For $0<\kappa_1<\kappa_2$, let us consider two exponential weight functions $m_1$ and $m_2$ as defined in~\eqref{eq:weightFunc}. For any $\JJJ \in \R$ fixed, there exist $K_1,K_2>0$ and $\delta\in(0,1)$ constants such that
\begin{equation}\label{eq:BdissipH1m}
\langle Q_\eps[\JJJ]\,f,  f \rangle_{\HH^1} \le K_1\|f\|_{L^2(\R^2)}^2 - K_2\| f \|_{\HH^1}^2,\quad  \forall\,f \in H^1(m_2),
\end{equation}
where $\langle\cdot,\cdot\rangle_{\HH^1}$ is the scalar product related to the Hilbert norm
$$
\| f \|_{\HH^1}^2 := \| f \|_{L^2(m_2)}^2 +  \delta^{3/2}\| \partial_x f \|_{L^2(m_2)}^2  + \delta^{4/3} \langle\partial_x f,\partial_v f\rangle_{L^2(m_1)}  + \delta\,   \| \partial_v f \|_{L^2(m_2)}^2.
$$ 
\end{lem}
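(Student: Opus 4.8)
The plan is to compute the time derivative of $\|f_t\|_{\HH^1}^2$ along the linear flow generated by $Q_\eps[\JJJ]$ by carrying out three separate energy estimates — one for each of the terms $\|f\|_{L^2(m_2)}^2$, $\|\partial_x f\|_{L^2(m_2)}^2$ and $\|\partial_v f\|_{L^2(m_2)}^2$ — and one mixed estimate for the cross term $\langle \partial_x f, \partial_v f\rangle_{L^2(m_1)}$, and then to combine them with judiciously chosen powers of the small parameter $\delta$ so that the bad contributions are absorbed. Throughout I drop the fixed $\JJJ$ from the notation and write $B_\eps$ for $B_\eps(\JJJ)$, which is a cubic polynomial in $v$ (plus an affine term in $x$), so that $\partial_v B_\eps$ is quadratic in $v$ and bounded below only at infinity up to lower-order terms. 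First I would treat the plain $L^2(m_2)$ estimate: integrating $(Q_\eps f)\, f\, m_2^2$ by parts, the diffusion term $\partial^2_{vv}f$ yields $-\|\partial_v f\|_{L^2(m_2)}^2$ plus weight-derivative terms, and the transport terms $\partial_x(Af)+\partial_v(B_\eps f)$ produce, after symmetrising, a zeroth-order multiplier of the form $\tfrac12(\partial_x A + \partial_v B_\eps)$ together with contributions from $\nabla m_2/m_2 = \kappa_2(x,v)$. The key point is that the weight $m_2$ grows fast enough that the resulting polynomial multiplier has leading term $-c\,(x^2+v^4)$, hence controls $\|f\|_{\HH^1}^2$-type quantities at infinity, at the price of a constant times $\|f\|_{L^2(\R^2)}^2$ on a compact set — this is exactly the right-hand side of~\eqref{eq:BdissipH1m}.

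Next I would differentiate the equation in $x$ and in $v$. Since $A$ and $B_\eps$ have $x,v$-dependence, commutators appear: $\partial_x(Q_\eps f) = Q_\eps(\partial_x f) + [\partial_x, Q_\eps]f$ with $[\partial_x,Q_\eps]f = -(\partial_x A)\,\partial_x f + \ldots$ and similarly $[\partial_v, Q_\eps]f = -(\partial_v A)\,\partial_x f - (\partial_v B_\eps)\,\partial_v f + \ldots$. The dangerous term is the one coming from $\partial_v$ hitting $B_\eps$, namely $-(\partial_v B_\eps)\,\partial_v f$ integrated against $\partial_v f\, m_2^2$, giving $-\int (\partial_v B_\eps)\,(\partial_v f)^2 m_2^2$; because $\partial_v B_\eps$ is a quadratic in $v$ with positive leading coefficient, this is actually favourable at infinity, but near $v=0$ it is sign-indefinite and must be dominated by the good term $-\|\partial^2_{vv}(\partial_v f)\|$-type quantity or absorbed into $-K_2\|f\|_{\HH^1}^2$. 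The genuinely problematic structural feature is hypoellipticity: the $\partial_x$-derivative estimate has no diffusion to control $\|\partial^2_{xv} f\|$ or even $\|\partial_x f\|$ at infinity on its own, because the equation diffuses only in $v$. This is where the twisted norm enters: the cross term $\delta^{4/3}\langle\partial_x f,\partial_v f\rangle_{L^2(m_1)}$, when differentiated in time, produces — via the transport part $\partial_v(B_\eps f)$ and the commutator with $\partial_x$ — a term proportional to $-\|\partial_x f\|_{L^2}^2$ (up to weights and lower order), which is precisely the missing coercivity in the $x$-direction. This is the Hérau–Villani / Nash-type hypocoercivity mechanism, and reproducing it with unbounded coefficients and the right weight $m_1$ versus $m_2$ is the heart of the argument.

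The final assembly is a matter of bookkeeping with $\delta$. Writing the time derivative of $\|f_t\|_{\HH^1}^2$ as a sum of the four contributions with coefficients $1$, $\delta^{3/2}$, $\delta^{4/3}$, $\delta$, one collects all the ``bad'' terms (the ones of wrong sign: the cross terms $\langle \partial^2_{xv}f,\partial_v f\rangle$, the $\|\partial_x f\|^2$ appearing with $+$ sign in the $\partial_x$-estimate, etc.) and checks, using Young's inequality with weights, that each is bounded by a small multiple — controlled by a positive power of $\delta$ — of the ``good'' terms $-\|\partial_v f\|^2$, $-\|\partial^2_{vv}f\|^2$, $-\delta^{3/2}\|\partial^2_{xv}f\|^2$ and $-\delta^{4/3}\|\partial_x f\|^2$, so that for $\delta$ small enough everything closes with a net bound $\le K_1\|f\|_{L^2(\R^2)}^2 - K_2\|f\|_{\HH^1}^2$; one also has to check the elementary fact that $\|\cdot\|_{\HH^1}$ is indeed a norm equivalent to $\|\cdot\|_{H^1(m_2)}$, which follows from Cauchy–Schwarz on the cross term once $\delta$ is small, since $m_1 \le m_2$. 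I expect the main obstacle to be precisely the choice of exponents $3/2$ and $4/3$ on $\delta$ and the verification that the hypocoercive cross-term estimate generates $-\delta^{4/3}\|\partial_x f\|^2$ with a constant large enough to dominate the positive $\|\partial_x f\|^2$ contribution coming from the diagonal $\partial_x$-estimate — equivalently, making sure the powers are ordered so that every error term carries a strictly positive power of $\delta$ relative to the good term it must be absorbed into. The unbounded coefficients make this more delicate than in the torus case because the weight functions $m_1, m_2$ must simultaneously be used to gain coercivity at infinity and be compatible (the ratio $m_1/m_2$ decaying) so that the cross term does not destroy positivity of the quadratic form.
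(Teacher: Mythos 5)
Your overall architecture is the paper's: a diagonal $L^2(m_2)$ estimate and $\partial_x$, $\partial_v$ estimates in which the exponential weight produces a confining polynomial multiplier with leading term $x^2+v^4$, a cross-term estimate in the twisted scalar product $\langle\partial_x f,\partial_v f\rangle_{L^2(m_1)}$ with $\kappa_1<\kappa_2$, and a final absorption of all sign-indefinite terms by positive powers of $\delta$, plus the easy equivalence of $\|\cdot\|_{\HH^1}$ with $\|\cdot\|_{H^1(m_2)}$. However, you misidentify the mechanism that generates the missing coercivity in the $x$-direction, and as written that step would fail. You claim the cross term produces $-\|\partial_x f\|^2$ ``via the transport part $\partial_v(B_\eps f)$ and the commutator with $\partial_x$''. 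Since $\partial_x B_\eps\equiv 1$, the commutator $[\partial_x,\partial_v(B_\eps\,\cdot\,)]=\partial_v$, and in the cross pairing it only yields a term of the type $+\|\partial_v f\|^2_{L^2(m_1)}$ (see the paper's $\TT_5$), which is harmless because it is already controlled by the $v$-diffusion, but gives no control of $\partial_x f$ at all. The true source, isolated in the paper when the terms $\TT_2$ and $\TT_4$ are added, is the \emph{other} transport term $\partial_x(Af)$: because $A=ax-bv$ depends on $v$ with $\partial_v A=-b$, one has $[\partial_v,\partial_x(A\,\cdot\,)]=\partial_x(\partial_v A\,\cdot\,)=-b\,\partial_x$, and pairing this against $\partial_x f$ in $\langle\partial_v Q_\eps f,\partial_x f\rangle_{L^2(m_1)}$ produces exactly $-b\,\|\partial_x f\|^2_{L^2(m_1)}$.

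This is not a cosmetic slip: the coercivity constant is precisely the coupling coefficient $b>0$ of the adaptation variable, which is also what makes the H\"ormander bracket condition hold ($[\partial_v,\,A\partial_x+B_\eps\partial_v]=-b\,\partial_x+(\partial_v B_\eps)\partial_v$); if $b=0$ no choice of $\delta$-exponents closes the estimate, and looking for the coercivity in the $B_\eps$-commutator, as you propose, cannot produce it. Once this term is correctly identified, the rest of your plan — the $v^4+x^2$ confinement in the diagonal estimates, the use of $m_1<m_2$ to absorb the $M$- and $M^{3/2}$-weighted error terms of the cross estimate into $L^2(m_2)$-quantities, Young's inequality with the coefficients $1,\ \delta^{3/2},\ \delta^{4/3},\ \delta$, and the norm equivalence — coincides with the paper's Steps 1--4 and closes as you describe.
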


\begin{rem}
It is worth emphasising that for $\delta\in(0,1)$ the norm $\HH^1$ is equivalent to the usual norm of $H^1(m_2)$. Indeed, the choice of the exponents allows us to write
\begin{eqnarray*}
 c_\delta\|f\|^2_{H^1(m_2)}\leq\|f\|^2_{L^2(m_2)}+\Big(\delta^{3/2}-\frac{\delta^{5/3}}{2}\Big)\|\partial_xf\|^2_{L^2(m_2)}+\frac\delta2\|\partial_vf\|^2_{L^2(m_2)}\leq \|f\|^2_{\HH^1},
\end{eqnarray*}
for some $c_\delta>0$.
\end{rem}

\begin{proof}
The proof is presented as follows: the first three steps deal with inequalities in $L^2$ for $f$ and its derivatives, while the last one combines these inequalities to control the $\HH^1$ norm. Some long and tedious calculations are only outlined for the sake of clarity. In the following we denote by $k_0,k_1$ and $k_2$ some unspecified constants and drop the dependance on $\JJJ$.
\bigskip

\noindent\textit{Step 1. $L^2(m_2)$ norm.} We start by noticing that
\begin{eqnarray*}
 \langle \partial^2_{vv}f,f\rangle_{L^2(m_2)} &=&
 -\int_{\R^2}(\partial_vf)^2m_2^2+\kappa_2\int_{\R^2}(1+2\kappa_2 v^2)f^2m_2^2.
 \\
 \langle \partial_x(A f), f \rangle_{L^2(m_2)} 
 &=&\frac12\int_{\R^2}\Big[\partial_xA-A\frac{\partial_xm_2^2}{m_2^2}\Big]f^2\,m_2^2\,\,=\,\,\frac12\int_{\R^2}[a-2\kappa_2 x(ax-bv)]f^2\,m_2^2,
\end{eqnarray*}
and similarly
$$
 \langle \partial_v(B_\eps f),f \rangle_{L^2(m_2)} \,\,=\,\,\frac12\int_{\R^2}\big[3v^2-2(1+\lambda)v+\lambda-\eps-2\kappa_2 v\,B_\eps\big]f^2 \, m_2^2.
$$
Therefore, we get
\begin{eqnarray}
 \label{eq:preOne}
 \langle Q_\eps f,  f \rangle_{L^2(m_2)} =-\int_{\R^2}p(x,v)f^2\,m_2^2-\|\partial_vf\|_{L^2(m_2)}^2,
\end{eqnarray}
 where $p(x,v)$ is a polynomial in $x$ and $v$ with leading term $v^4+x^2$. In particular, there exist some positive constants $k_1$  and $k_2$ such that
\begin{equation}\label{eq:one}
 \langle Q_\eps f,  f \rangle_{L^2(m_2)} \leq k_1\| f\|^2_{L^2(\R^2)}-k_2\|f\|^2_{L^2(M^{1/2}m_2)}-\|\partial_vf\|_{L^2(m_2)}^2.
\end{equation}
\smallskip
 
\noindent\textit{Step 2. $x$-derivative bound.} We have
 \begin{eqnarray*}
 \langle \partial_x(\partial_x(A f)), \partial_xf \rangle_{L^2(m_2)} 
 &=&\frac12\int_{\R^2}\Big[3\,\partial_xA-A\frac{\partial_xm_2^2}{m_2^2}\Big](\partial_xf)^2\,m_2^2\\
 &=&\frac12\int_{\R^2}\big[3\,a-2\kappa_2 x(ax-bv)\big](\partial_xf)^2\,m_2^2,
\end{eqnarray*}
and
$$
\langle \partial_x(\partial_v(B_\eps f)), \partial_xf \rangle_{L^2(m_2)} =\int_{\R^2}[\partial_vB_\eps\partial_xf+\partial_xB_\eps\partial_vf+B_\eps\partial^2_{xv}f]\partial_xf\,m_2^2.
$$
Since $\partial_xB_\eps=1$, and observing that
$$
\int_{\R^2}[\partial_vB_\eps\partial_xf+B_\eps\partial^2_{xv}f ]\partial_xf\,m_2^2
\,=\,
\frac12\int_{\R^2}\Big[\partial_vB_\eps-B_\eps\frac{\partial_v m_2^2}{m_2^2}\Big](\partial_x f)^2m_2^2,
$$
we get
$$
\langle \partial_x(\partial_v(B_\eps f)), \partial_xf \rangle_{L^2(m_2)} \,\leq\, \int_{\R^2}|\partial_x f|\,|\partial_v f|m_2^2+
\frac12\int_{\R^2}\Big[\partial_vB_\eps-B_\eps\frac{\partial_v m_2^2}{m_2^2}\Big](\partial_x f)^2m_2^2.
$$

Using that
$$
 \langle \partial_x\partial^2_{vv}f,\partial_xf \rangle_{L^2(m_2)} =-\int_{\R^2}|\partial^2_{xv}f|^2m_2^2+\frac12\int_{\R^2}(\partial_xf)^2\partial^2_{vv}m_2^2.
$$
we finally obtain
\begin{eqnarray}\label{eq:two}
 \langle \partial_x(Q_\eps f), \partial_x f \rangle_{L^2(m_2)}
 &\leq&
 k_1\|\partial_x f\|^2_{L^2(\R^2)}-k_2\|\partial_xf\|^2_{L^2(M^{1/2}m_2)}\\
 &&
 \qquad-\|\partial^2_{xv}f\|^2_{L^2(m_2)}+\int_{\R^2}|\partial_xf||\partial_vf|\,m_2^2.\nonumber
\end{eqnarray}
 A similar calculation leads to
 \begin{eqnarray}\label{eq:three}
 \langle (\partial_vQ_\eps f), \partial_v f \rangle_{L^2(m_2)}
 &\leq&
 k_1\|\partial_v f\|^2_{L^2(\R^2)}-k_2\|\partial_vf\|^2_{L^2(M^{1/2}m_2)}\\
 &&
 -\|\partial^2_{vv}f\|^2_{L^2(m_2)}+b\int_{\R^2}|\partial_xf||\partial_vf|\,m_2^2\nonumber\\
 &&+2\kappa_2(1+\lambda)\int_{\R^2}v\,f^2m_2^2\nonumber.
\end{eqnarray}
 \textit{Step 3. Cross product bound.}
The contribution of the cross product term is a little bit more delicate. We decompose it into five quantities and we study them separately:
\begin{eqnarray*}
&&\langle\partial_x Q_\eps f, \partial_v f \rangle_{L^2(m_1)}+\langle\partial_v Q_\eps f, \partial_x f \rangle_{L^2(m_1)}\\
&&\qquad\qquad\qquad=\int_{\R^2}\big[(\partial_xf)(\partial_{vvv}^3f)+(\partial_vf)(\partial_{xvv}^3f)\big] \,m_1^2\\
&&\qquad\qquad\qquad+\int_{\R^2}\big[\partial_xA\partial_vf+\partial_vA\partial_xf+A\partial_{vx}^2f\big](\partial_x f)\,m_1^2\\
&&\qquad\qquad\qquad+\int_{\R^2}\big[\partial_{vv}^2Bf+2\partial_vB\partial_vf+B\partial_{vv}^2f\big](\partial_x f)\,m_1^2\\
&&\qquad\qquad\qquad+\int_{\R^2}\big[2\partial_xA\partial_xf+A\partial_{xx}^2f\big](\partial_v f)\,m_1^2\\
&&\qquad\qquad\qquad+\int_{\R^2}\big[\partial_vB\partial_xf+\partial_xB\partial_vf+B\partial_{xv}^2f\big](\partial_v f)\,m_1^2\quad=:\quad\sum_{i=1}^5\TT_i.
\end{eqnarray*}

We start by handling the first term on the right hand side. Using integration by parts adequately, we get
\begin{equation}\nonumber
 \TT_1 \,=\,
 \int_{\R^2} (\partial_xf)(\partial_v f)\partial_{vv}^2 m_1^2-2\int_{\R^2} (\partial_{xv}^2f)(\partial_{vv}^2f)\,m_1^2.
\end{equation}
Similarly, for the contributions involving $A$, we have
\begin{equation}\nonumber
\TT_2\,=\,\frac12\int_{\R^2}\Big[\partial_vA-A\frac{\partial_v m_1^2}{m_1^2}\Big](\partial_xf)^2m_1^2+a\int_{\R^2}(\partial_xf)(\partial_vf)\,m_1^2,
\end{equation}
and
$$
\TT_4
 \,=\,\int_{\R^2}\Big[\partial_xA-A\frac{\partial_xm_1^2}{m_1^2}\Big](\partial_xf)(\partial_vf)m_1^2+\frac12\int_{\R^2} \partial_v[Am_1^2](\partial_xf)^2.
$$
Adding these last two expressions, it only remains
\begin{equation}\nonumber
 \int_{\R^2}\partial_v A(\partial_xf)^2 m_1^2+\int_{\R^2}\Big[2a-A\frac{\partial_x m_1^2}{m_1^2}\Big](\partial_xf)(\partial_vf) m_1^2\leq -b\,\|\partial_xf\|_{L^2( m_1)}^2+k_0\int_{\R^2} |\partial_xf|\,|\partial_vf|\,M\, m_1^2,
\end{equation}
for some constant $k_0>0$.
\smallskip

For the contributions related to $B_\eps$, involved in $\TT_3$ and $\TT_5$, we have
$$
\TT_3\,\,=\,\,-\int_{\R^2}2\kappa_1 x(3v-1-\lambda)f^2m_1^2+2\int_{\R^2}\partial_vB_\eps(\partial_xf)(\partial_v f)m_1^2+\int_{\R^2}B_\eps(\partial^2_{vv}f)(\partial_xf)m_1^2,
$$
and
$$
\TT_5\,\,=\,\,\int_{\R^2}\partial_vB_\eps\,(\partial_xf)(\partial_v f)\,m_1^2+\frac12\int_{\R^2} \Big[\partial_xB_\eps-B_\eps\frac{\partial_xm_1^2}{m_1^2}\Big](\partial_vf)^2 m_1^2,
$$
Finally, for the last contribution in $\TT_3$, we have
$$\int_{\R^2} B_\eps(\partial_{vv}^2f)(\partial_x f)\, m_1^2\,\leq\, k_0\int_{\R^2} (\partial_{vv}^2f)(\partial_x f)\,M^{3/2}\,m_1^2,
$$ 
getting that there exists $k_0>0$ such that
\begin{eqnarray}\label{eq:four}
&&\langle\partial_x Q_\eps f, \partial_v f \rangle_{L^2( m_1)}
+\langle\partial_v Q_\eps f, \partial_x f \rangle_{L^2(m_1)}\\ \nonumber
&&\qquad\qquad\leq 
k_0\int_{\R^2}|\partial_xf|\,|\partial_vf| \,M\, m_1^2+k_0\int_{\R^2}|\partial_{xv}^2f|\,|\partial_{vv}^2f|\,m_1^2\\ \nonumber
&&\qquad\qquad\qquad-b\,\|\partial_xf\|^2_{L^2(m_1)}+k_0\int_{\R^2}|\partial_{vv}^2f|\,|\partial_xf|\,M^{3/2}\, m_1^2 \nonumber\\
&&\qquad\qquad\qquad\qquad+k_0\int_{\R^2}|\partial_vf|^2\,M^2\,m_1^2+k_0\int_{\R^2} f^2\,M\,m_1^2.\nonumber
\end{eqnarray}
\smallskip

\noindent\textit{Step 4. Conclusion.}
To get~\eqref{eq:BdissipH1m}, we just put together~\eqref{eq:one},~\eqref{eq:two},~\eqref{eq:three} and~\eqref{eq:four} and we use Young's inequality several times. Indeed, the scalar product $\langle\cdot,\cdot\rangle_{\HH^1}$ applied to any $f\in H^1(m_2)$ writes
\begin{eqnarray*}
 \langle Q_\eps f,f\rangle_{\HH^1} &=& \langle Q_\eps f,f\rangle_{L^2(m_2)}\\
 &&\quad+\delta^{3/2}\langle\partial_xQ_\eps f,\partial_xf\rangle_{L^2(m_2)}+\delta\,\langle\partial_vQ_\eps f,\partial_vf\rangle_{L^2(m_2)}\\
 &&\qquad+\frac{\delta^{4/3}}2\langle \partial_xQ_\eps f,\partial_vf\rangle_{L^2(m_1)}+\frac{\delta^{4/3}}2\langle\partial_vQ_\eps f,\partial_xf\rangle_{L^2(m_1)}.
\end{eqnarray*}

To give an idea of the method, we only explain how to get rid of a few terms. For example, for the positive contribution of~\eqref{eq:two}, it holds
\begin{equation}\nonumber
 \delta^{3/2}k_1\|\partial_xf\|_{L^2(\R^2)}^2+\delta^{3/2}\int|\partial_xf||\partial_vf|m_2^2\leq\delta^{3/2}k_1\|\partial_xf\|_{L^2(\R^2)}^2+\delta^{7/4}\|\partial_xf\|_{L^2(m_2)}^2+\delta^{5/4}\,\|\partial_vf\|_{L^2(m_2)}^2,
\end{equation}
and for $\delta>0$ small enough these terms are annihilated by the quantities
$$
-\|\partial_vf\|^2_{L^2(m_2)}-\delta^{3/2}k_2\|\partial_xf\|^2_{L^2(M^{1/2}m_2)}-\frac {\delta^{4/3}b}2\|\partial_xf\|^2_{L^2( m_1)}
$$ present in the right hand side of~\eqref{eq:one},~\eqref{eq:two} and~\eqref{eq:four}.

In~\eqref{eq:three}, the only delicate contribution is
$$
\delta\,b\int|\partial_xf||\partial_vf|m_2^2\,\leq\, \frac{\delta^{5/3}\,b}2\|\partial_xf\|_{L^2(m_2)}^2+\frac{\delta^{1/3}b}2\|\partial_vf\|^2_{L^2(m_2)},
$$
but the right hand sides of~\eqref{eq:one} and~\eqref{eq:two} include
$$
-\|\partial_vf\|^2_{L^2(m_2)}-\delta^{3/2}k_2\|\partial_xf\|^2_{L^2(M^{1/2}m_2)},
$$
and once again for $\delta>0$ small the sum is nonpositive.

The positive part of~\eqref{eq:four} is controlled using that $\kappa_1<\kappa_2$. Indeed, in that situation
\begin{equation}\nonumber
 \delta^{4/3}k_0\int|\partial^2_{vv}f||\partial_xf|\,M^{3/2}\,m_1^2\leq \delta^{4/3-1/4}k_0\|\partial^2_{vv}f\|^2_{L^2(m_2)}+\delta^{4/3+1/4}k_0\|\partial_{x}f\|^2_{L^2(m_2)},
\end{equation}
replacing, if necessary, $k_0$ by a larger constant. If $\delta>0$ is small we get rid of these terms thanks to the presence of 
$$
-\delta^{3/2}k_2\|\partial_xf\|^2_{L^2(M^{1/2}m_2)}-\delta\,\|\partial^2_{vv}f\|^2_{L^2(m_2)},
$$ 
in~\eqref{eq:two} and~\eqref{eq:three}.

All remaining positive contributions can be handled in the same fashion leading to the conclusion that one can find $K_1,K_2>0$ such that
\begin{equation}\nonumber
 \langle Q_\eps f,f\rangle_{\HH^1}\leq K_1\|f\|_{L^2(\R^2)}^2-K_2\|f\|^2_{\HH^1}.
\end{equation}
\end{proof}

\begin{cor}\label{coro:H1m} Estimate \eqref{eq:H1m} holds.
\end{cor}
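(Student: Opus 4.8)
The plan is to feed the differential inequality \eqref{eq:BdissipH1m} into Gr\"onwall's lemma, once the ``bad'' term $K_1\|f\|_{L^2(\R^2)}^2$ has been tamed by a Nash-type interpolation. For a solution $f_t$ of \eqref{eq:FhNs} (smooth enough for the computation to make sense, see below), I would write along the flow
\[
\frac{d}{dt}\|f_t\|_{\HH^1}^2 \;=\; 2\,\langle Q_\eps[\JJJ_{f_t}]\,f_t,\,f_t\rangle_{\HH^1},
\]
and apply \eqref{eq:BdissipH1m} with $\JJJ=\JJJ(f_t)$. By Lemma~\ref{lemma:L1M} one has $\sup_{t\ge0}|\JJJ(f_t)|<C_0'$, and since in the previous proof the parameter $\JJJ$ enters the controlling polynomial $p(x,v)$ only through lower-order terms (its leading part $v^4+x^2$ being $\JJJ$-free), the constants $K_1,K_2$ in \eqref{eq:BdissipH1m} can be chosen uniformly for $|\JJJ|\le C_0'$. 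This gives
\[
\frac{d}{dt}\|f_t\|_{\HH^1}^2 \;\le\; 2K_1\,\|f_t\|_{L^2(\R^2)}^2 \;-\; 2K_2\,\|f_t\|_{\HH^1}^2 .
\]

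The key step is to bound $\|f_t\|_{L^2(\R^2)}^2$ by $\|f_t\|_{\HH^1}$ to the \emph{first} power, which is exactly where the normalisation $\|f_t\|_{L^1(\R^2)}=1$ is used. I would invoke the two-dimensional Nash inequality $\|g\|_{L^2(\R^2)}^2\le C_N\,\|g\|_{L^1(\R^2)}\,\|\nabla g\|_{L^2(\R^2)}$, together with the elementary bound $\|\nabla f_t\|_{L^2(\R^2)}^2\le\|f_t\|_{H^1(m_2)}^2\le c_\delta^{-1}\|f_t\|_{\HH^1}^2$ (using $m_2\ge1$ and the norm equivalence recorded in the Remark following \eqref{eq:BdissipH1m}), so that $\|f_t\|_{L^2(\R^2)}^2\le C_N c_\delta^{-1/2}\,\|f_t\|_{\HH^1}$. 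Young's inequality then absorbs this contribution into the dissipative term, yielding
\[
\frac{d}{dt}\|f_t\|_{\HH^1}^2 \;\le\; C'' \;-\; K_2\,\|f_t\|_{\HH^1}^2 ,
\]
and Gr\"onwall's lemma gives $\|f_t\|_{\HH^1}^2\le\max\bigl(C''/K_2,\,\|f_0\|_{\HH^1}^2\bigr)$, which is precisely \eqref{eq:H1m} with $C_2:=\sqrt{C''/K_2}$.

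The only genuine obstacle is the rigorous justification of the formal computation: a priori $f_t$ is only known to lie in $C([0,\infty);L^1(M^2))$, so neither the membership $f_t\in H^1(m_2)$ nor the chain rule $\tfrac{d}{dt}\|f_t\|_{\HH^1}^2=2\langle Q_\eps[\JJJ_{f_t}]f_t,f_t\rangle_{\HH^1}$ is available at this stage. I would therefore run the argument on a regularised problem — truncating the unbounded drift and adding a small diffusion in $x$, or using a Galerkin/mollified scheme — for which solutions are smooth and the above manipulations are licit; since all the constants ($K_1,K_2,\delta,c_\delta,C_0',C_N$) are independent of the regularisation parameter, the estimate \eqref{eq:H1m} is uniform and passes to the limit, lower semicontinuity of $\|\cdot\|_{\HH^1}$ along the convergence provided by Theorem~\ref{th:E&U&B} giving the bound for the actual solution.
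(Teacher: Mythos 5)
Your argument is essentially the paper's own proof: apply the dissipativity estimate \eqref{eq:BdissipH1m} along the flow, use the two-dimensional Nash inequality together with the normalisation $\|f_t\|_{L^1(\R^2)}=1$ and the equivalence of $\|\cdot\|_{\HH^1}$ with $\|\cdot\|_{H^1(m_2)}$ to absorb the $K_1\|f_t\|_{L^2(\R^2)}^2$ term, and conclude by Gr\"onwall. Your extra remarks on the uniformity of the constants in $\JJJ$ and on justifying the formal computation via a regularised scheme are sound and consistent with how the paper itself constructs solutions in the proof of Theorem~\ref{th:E&U&B}, so there is nothing to correct.
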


\begin{proof}
Nash's inequality in the 2-dimensional case reads: there exists a constant $C>0$, such that for any $f\in L^1(\R^2)\cap H^1(\R^2)$,
\begin{equation}\label{eq:Nash}
 \|f\|^2_{L^2(\R^2)}\,\,\leq\,\, C\|f\|_{L^1(\R^2)}\|D_{x,v} f\|_{L^2(\R^2)}\,\,\leq\,\,\frac{C}{2\delta'}\|f\|^2_{L^1(\R^2)}+\frac{\delta'}2\|D_{x,v} f\|^2_{L^2(\R^2)}.
\end{equation}
Coming back to the previous lemma, using the equivalence of the norms $\HH^1$ and $H^1(m_2)$, together with the fact that a solution $f_t$ to~\eqref{eq:FhNs} is a probability measure, we get that, 
\begin{equation}\nonumber
  \frac{d}{dt}\|f_t\|_{\HH^1}^2 \,=\, \langle Q_\eps [\JJJ_{f_t}]\,f_t,f_t\rangle_{\HH^1} \,\leq\, k_1-k_2\|f_t\|_{\HH^1}^2,
\end{equation}
for some $k_1,k_2>0$ constants. Finally, integrating in time, we get
$$
 \|f_t\|_{\HH^1}\leq \max(C_2,\|f_0\|_{\HH^1}),
$$
for some $C_2>0$ depending only on the parameters of the system and the initial condition.
\end{proof}

Let us notice that we can go a little further in the analysis of the regularity of the solutions of~\eqref{eq:FhNs}. Actually, we can expect that the norm $H^2_v(m)$ is also bounded. Indeed, there exists $k_0>0$ such that
\begin{eqnarray}\label{extra}
&& \langle \partial_{vv}^2 Q_\eps f,\partial_{vv}^2f\rangle_{L^2(m_2)} \\
&&\qquad=-\int|\partial^3_{vvv}f|\, m_2^2 + \frac12\int|\partial^2_{vv}f|^2\,\partial^2_{vv} m_2^2 \nonumber\\
&&\qquad\quad+2\int(\partial_vA)(\partial^2_{vv}f)(\partial^2_{xv}f)\,m_2^2+\frac12\int|\partial^2_{vv}f|^2\Big[\partial_xA-A\frac{\partial_x m_2^2}{ m_2^2}\Big]\, m_2^2 \nonumber\\
&&\qquad\quad+\int(\partial^3_{vvv}B) f(\partial^2_{vv}f)\, m_2^2+3\int(\partial^2_{vv}B)(\partial_vf)(\partial^2_{vv}f)\, m_2^2\nonumber\\
&&\qquad\quad+\frac12\int|\partial^2_{vv}f|^2\Big[5\,\partial_vB_\eps-B_\eps\frac{\partial_v m_2^2}{ m_2^2}\Big]\,m_2^2\nonumber 
\\
&&\qquad\leq k_0\,\Big[\int|\partial^2_{vv}f|^2+\int|\partial^2_{xv}f|^2\,m_2^2+\int|f|^2\,m_2^2+\int|\partial_{v}f|^2\,m_2^2\Big].
\nonumber
\end{eqnarray}
We can therefore state that

\begin{cor} 
Estimate~\eqref{eq:H2vm} holds.
\end{cor}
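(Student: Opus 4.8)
The plan is to reproduce, now at the level of the additional $v$-derivative, the twisted-norm argument already used for the bound \eqref{eq:H1m}. I introduce the norm
$$
\|f\|_{\HH^2_v}^2 \,:=\, \|f\|_{\HH^1}^2 \,+\, \delta'\,\|\partial^2_{vv}f\|_{L^2(m_2)}^2 ,
$$
where $\delta'\in(0,1)$ is a small constant, to be fixed after the $\delta$ appearing in \eqref{eq:BdissipH1m}; since $0<\delta'<1$ and $\HH^1$ is equivalent to $\|\cdot\|_{H^1(m_2)}$, this norm is equivalent to $\|\cdot\|_{H^2_v(m_2)}$, so that proving the estimate for this choice of weight gives \eqref{eq:H2vm}. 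As in Corollary~\ref{coro:H1m}, it then suffices to establish a dissipativity inequality of the form
$$
\langle Q_\eps[\JJJ]\,f, f\rangle_{\HH^1} + \delta'\,\langle \partial^2_{vv} Q_\eps[\JJJ]\,f,\, \partial^2_{vv}f\rangle_{L^2(m_2)} \,\le\, K_1\|f\|_{L^2(\R^2)}^2 - K_2\|f\|_{\HH^2_v}^2 ,
$$
uniformly for $\JJJ$ in a bounded set, and then to close the argument exactly as before, using Nash's inequality \eqref{eq:Nash}, the fact that $\|f_t\|_{L^1(\R^2)}=1$, and that $\JJJ(f_t)$ stays bounded along the flow by Lemma~\ref{lemma:L1M}, so that all constants are uniform in time.

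The first term on the left is handled by a sharpened form of \eqref{eq:BdissipH1m}. The proof of that estimate produces, in Step~2 and Step~3, the negative weighted second-order contributions $-\delta^{3/2}\|\partial^2_{xv}f\|_{L^2(m_2)}^2$ (from \eqref{eq:two}) and $-\delta\,\|\partial^2_{vv}f\|_{L^2(m_2)}^2$ (from \eqref{eq:three}), and in the absorption procedure of Step~4 only a small fraction of each is actually consumed; hence, for $\delta$ small enough,
$$
\langle Q_\eps[\JJJ]\,f, f\rangle_{\HH^1} \,\le\, K_1\|f\|_{L^2(\R^2)}^2 - K_2\|f\|_{\HH^1}^2 - K_3\big(\|\partial^2_{xv}f\|_{L^2(m_2)}^2 + \|\partial^2_{vv}f\|_{L^2(m_2)}^2\big)
$$
for some $K_3>0$. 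For the second term, \eqref{extra} gives, using $m_2\ge 1$ to bound the weightless $\int|\partial^2_{vv}f|^2$ by $\|\partial^2_{vv}f\|_{L^2(m_2)}^2$,
$$
\delta'\,\langle \partial^2_{vv} Q_\eps[\JJJ]\,f,\, \partial^2_{vv}f\rangle_{L^2(m_2)} \,\le\, \delta' k_0 \big(\|\partial^2_{vv}f\|_{L^2(m_2)}^2 + \|\partial^2_{xv}f\|_{L^2(m_2)}^2 + \|f\|_{L^2(m_2)}^2 + \|\partial_v f\|_{L^2(m_2)}^2\big).
$$
Adding the two lines: since $-K_2\|f\|_{\HH^1}^2$ dominates $-c\big(\|f\|_{L^2(m_2)}^2 + \|\partial_v f\|_{L^2(m_2)}^2\big)$, half of it absorbs the contributions $\delta' k_0\big(\|f\|_{L^2(m_2)}^2+\|\partial_v f\|_{L^2(m_2)}^2\big)$, while $\delta' k_0\big(\|\partial^2_{xv}f\|_{L^2(m_2)}^2+\|\partial^2_{vv}f\|_{L^2(m_2)}^2\big)$ is absorbed by $-K_3(\cdots)$ as soon as $\delta'<K_3/k_0$. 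What remains is bounded by $K_1\|f\|_{L^2(\R^2)}^2 - c\big(\|f\|_{\HH^1}^2 + \delta'\|\partial^2_{vv}f\|_{L^2(m_2)}^2\big)$, which is the claimed dissipativity inequality.

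Taking $f=f_t$, controlling $\|f_t\|_{L^2(\R^2)}^2$ by \eqref{eq:Nash} together with $\|f_t\|_{L^1(\R^2)}=1$ and $\|D_{x,v}f_t\|_{L^2(\R^2)}^2\le C\|f_t\|_{\HH^1}^2\le C\|f_t\|_{\HH^2_v}^2$, and keeping enough of the negative term to reabsorb the resulting $\|f_t\|_{\HH^2_v}^2$, we obtain $\frac{d}{dt}\|f_t\|_{\HH^2_v}^2 \le k_1 - k_2\|f_t\|_{\HH^2_v}^2$, whence \eqref{eq:H2vm} by integration. As for the previous bounds, this computation is a priori only formal since a weak solution is known only to lie in $H^1(m)$; it should be carried out on smooth approximate solutions (e.g. those of a parabolic regularization of \eqref{eq:FhNs}) and then transferred to $f_t$ via weak lower semicontinuity of $\|\cdot\|_{\HH^2_v}$ and the continuity statement of Theorem~\ref{th:E&U&B}. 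The only genuinely delicate point is the sharpened $H^1$ estimate above: one must verify that, after Step~4 of the proof preceding Corollary~\ref{coro:H1m}, a definite share of the negative weighted second-order terms is left over to soak up the positive contributions produced by \eqref{extra}; granting this, the remainder is a routine Young--Gronwall argument.
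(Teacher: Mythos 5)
Your proposal is correct and takes essentially the same route as the paper: the same twisted norm $\|f\|_{\HH^1}^2+\delta'\|\partial^2_{vv}f\|_{L^2(m_2)}^2$ (the paper takes $\delta'=\delta^2$), absorption of the positive terms of \eqref{extra} by the unspent negative second-order contributions of \eqref{eq:two}--\eqref{eq:three}, and the Nash--Gronwall closure of Corollary~\ref{coro:H1m}. Your explicit remarks on the leftover dissipative terms and on performing the computation on regularized solutions merely spell out what the paper leaves implicit.
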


\begin{proof}
 The proof follows the same idea already introduced in the proof of Corollary~\ref{coro:H1m}. We consider the norm
 $$
\| f \|_{\HH^2_v}^2 := \| f \|_{\HH^1}^2 +  \delta^{2}\| \partial^2_{vv} f \|_{L^2(m_2)}^2,
$$
and notice that~\eqref{eq:one},~\eqref{eq:two},~\eqref{eq:three} together with~\eqref{extra} imply that
\begin{eqnarray*}
\frac{d}{dt}\| f_t \|_{\HH^2_v}^2 
&\leq&
 \frac{d}{dt}\|f_t\|^2_{\HH^1}\\
 &&\quad+2\, \delta^2\,k_0\,\Big[\int|\partial^2_{vv}f_t|^2m_2^2+\int|\partial^2_{xv}f_t|^2m_2^2+\int|f_t|^2m_2^2+\int|\partial_{v}f_t|^2m_2^2\Big]\\
 &\leq& k_1-k_2\| f_t \|_{\HH^2_v}^2 ,
\end{eqnarray*}
for some $k_1,k_2>0$ depending on some $\delta>0$ small and the parameters of the system. Inequality~\eqref{eq:H2vm} follows.
\end{proof}
\smallskip

\subsection{Entropy estimates and uniqueness of the solution}

Now we focus our attention on the problem of uniqueness of the solutions to~\eqref{eq:FhNs}. First, we prove that solutions remain in the space of functions with finite entropy. To that aim, for any positive function $f$, we define
$$
 I_v(f)\,\,:=\,\,\int_{\R^2}\frac{|\partial_vf(x,v)|^2}{f(x,v)}\,dxdv,
$$
which is understood as a {\it partial Fisher information}. When the previous quantity is not well defined we use the convention $I_v(f)=+\infty$. Notice that in any case $I_v(\cdot)\geq0$. Equipped with this definition we can state:

\begin{lem} For any $f_0 \in L^1(M) \cap L^1 \log L^1\cap \Pp(\R^2)$ we denote by $f_t$ the associated solution to the FhN statistical equation~\eqref{eq:FhNs} with initial condition $f_0$. It holds
\beqn\label{eq:entropy}
\sup_{t \in [0,T]} \HHH(f_t) + \int_0^t I_v(f_s) \, ds \le C(T),
\eeqn
where $C(T)$ depend on $f_0$ and the coefficients of the problem.
\end{lem}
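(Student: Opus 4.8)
The plan is to differentiate the entropy $\HHH(f_t)=\int_{\R^2}f_t\log f_t$ along the evolution \eqref{eq:FhNs}, to read off the entropy dissipation term $-I_v(f_t)$ produced by the diffusion $\partial^2_{vv}$, and to absorb the remaining (transport) contributions into the uniform moment bound of Lemma~\ref{lemma:L1M}. Concretely, I would multiply \eqref{eq:FhNs} by $1+\log f_t$, integrate over $\R^2$, and integrate by parts: the diffusion term gives $\int_{\R^2}(1+\log f_t)\,\partial^2_{vv}f_t=-\int_{\R^2}|\partial_vf_t|^2/f_t=-I_v(f_t)$, the $x$-transport term gives $\int_{\R^2}(1+\log f_t)\,\partial_x(Af_t)=\int_{\R^2}(\partial_xA)\,f_t=a$, and the $v$-transport term gives $\int_{\R^2}(1+\log f_t)\,\partial_v(B_\eps(\JJJ_{f_t})f_t)=\int_{\R^2}(\partial_vB_\eps)\,f_t$, each time because $f_t$ has unit mass. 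Since $\partial_vB_\eps=3v^2-2(1+\lambda)v+\lambda-\eps$ does not depend on $\JJJ_{f_t}$ and is bounded by $C_\lambda M$ with $C_\lambda$ depending only on $\lambda$ (and on $\eps_0$ via $\eps\le\eps_0$), the bound \eqref{eq:L1H} yields $\int_{\R^2}(\partial_vB_\eps)f_t\le C_\lambda\|f_t\|_{L^1(M)}\le C_\lambda\max(C_0,\|f_0\|_{L^1(M)})$ uniformly in $t\ge0$. This gives $\frac{d}{dt}\HHH(f_t)\le\bar C-I_v(f_t)$ for a constant $\bar C=\bar C(a,\lambda,\eps_0,\|f_0\|_{L^1(M)})$, and integrating on $[0,t]$ with $t\le T$ produces $\HHH(f_t)+\int_0^tI_v(f_s)\,ds\le\HHH(f_0)+\bar C\,T=:C(T)$, which is \eqref{eq:entropy}. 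The left-hand side is meaningful: $\HHH(f_0)<\infty$ since $f_0\in L^1\log L^1$, and $\HHH(f_t)\ge-\|f_t\|_{L^1(M)}-\log(2\pi/e)$ is bounded from below by \eqref{eq:L1H}.

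This computation is only formal: $\log f_t$ need not be regular, $A$ and $B_\eps$ are unbounded, and one must ensure that no boundary term survives at infinity in the integrations by parts. I would make it rigorous through the approximation scheme already underlying Theorem~\ref{th:E&U&B}: realise $f_t$ as a limit in $C([0,T];L^1(M))$ of smooth, strictly positive solutions $f^n_t$ of regularised problems (mollified, compactly supported initial data with $\HHH(f^n_0)\to\HHH(f_0)$; truncated coefficients; and, if needed, an extra vanishing viscosity $\eta\,\partial^2_{xx}$ making the equation uniformly parabolic). For each $f^n$ the entropy identity is legitimate, and — crucially — the bound of the first paragraph holds with $\bar C$ independent of $n$, because the moment estimate of Lemma~\ref{lemma:L1M} applies uniformly along the approximation. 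Passing to the limit $n\to\infty$ then uses that $g\mapsto\HHH(g)$ and $g\mapsto\int_0^TI_v(g_s)\,ds$ are convex and lower semicontinuous for the $L^1$ (hence weak-$L^1$) convergence, which transfers \eqref{eq:entropy} from $f^n$ to $f$. An equivalent route is to argue directly on $f_t$ by renormalising \eqref{eq:FhNs} in the DiPerna--Lions sense, testing against $1+\log(\delta+f_t)$ and letting $\delta\to0$, the moment bounds of Lemma~\ref{lemma:L1M} furnishing the uniform integrability needed to control the error terms.

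I expect the only genuine difficulty to be this limit passage, and more precisely the lower semicontinuity of the space--time partial Fisher information $\int_0^TI_v(f^n_s)\,ds$ under weak $L^1$ convergence of $f^n$ and of $\partial_vf^n$; this is classical, the integrand being jointly convex in $(f,\partial_vf)$. Everything else reduces to the algebraic identity of the first paragraph combined with the a priori estimate \eqref{eq:L1H} that is already at our disposal.
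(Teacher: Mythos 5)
Your proposal follows essentially the same route as the paper's proof: differentiate $\HHH(f_t)$ along the flow, read off $-I_v(f_t)$ from the $\partial^2_{vv}$ term, bound $\int_{\R^2}(\partial_xA+\partial_vB_\eps)f_t$ by $C\|f_t\|_{L^1(M)}$ using the uniform moment estimate \eqref{eq:L1H} of Lemma~\ref{lemma:L1M}, integrate in time, and invoke the moment-based lower bound on the entropy to control both terms in \eqref{eq:entropy}. The additional care you devote to justifying the formal computation (regularised approximations, lower semicontinuity of $\HHH$ and of the partial Fisher information) is consistent with, and slightly more explicit than, the paper, which performs the same formal calculation and defers rigour to the renormalisation argument appearing in the proof of Theorem~\ref{th:E&U&B}.
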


\begin{proof}
It is well known that for functions with finite moments, the entropy can be bounded from below. Indeed, since
$$
 r_1\log r_1\,\geq\, -r_2+r_1\log r_2,\qquad\forall r_1\geq0,r_2>0,
$$
taking $r_1=f(x,v)$ and $r_2=e^{-M}$, it holds
$$
 0\,\,\geq\,\, f\,\log f \,\,\geq\,\, -e^{-M}- f\,M,
$$
implying that
$$
 \HHH(f_t)\geq -\int_{\R^2}e^{-M}-\int_{\R^2}f_t\,M\geq-2\pi e^{-1}-\max(C_0,\|f_0\|_{L^1(M)}).
 $$

On the other hand, for any solution of~\eqref{eq:FhNs} with initial datum $f_0$ there exists a positive constant $C$, depending on the parameters of the system, $\eps_0$ and $C_0'$, such that
\begin{eqnarray*}
 \frac{d}{dt}\HHH(f_t)&=&\int(1+\log(f_t))\,Q_\eps[\JJJ_{f_t}]\,f_t\\
 &=&-I_v(f_t)+\int\big(\partial_xA+\partial_vB_\eps(\JJJ_{f_t})\big)\,f_t\\
 &\leq&-I_v(f_t)+C\|f_t\|_{L^1(M)}.
\end{eqnarray*}
Let us fix $T>0$ and take any $t<T$, thanks to estimate~\eqref{eq:L1H}, we get that
$$
 \HHH(f_t)\leq-\int_0^tI_v(f_s)\,ds+ \HHH(f_0)+C\, T\,\max(C_0,\|f_0\|_{L^1(M)}).
$$
Since $\HHH$ is bounded by below, we get that $I_v(f_t)\in L^1([0,T])$. Moreover, taking the supremum on the last relationship, we get
$$
 \sup_{t\in[0,T]}\HHH(f_t)\,\,\leq\,\, \HHH(f_0)+C\, T\max(C_0,\|f_0\|_{L^1(M)}).
$$
\end{proof}

\begin{cor}\label{cor:Estimf-g} For any two initial data $f_0, g_0 \in L^1(M^2) \cap \,L^1 \log L^1\cap\, \Pp(\R^2)$ the associated solutions $f$ and $g$ to the FhN statistical equation \eqref{eq:FhNs}, satisfy
$$
\sup_{[0,T]} \| f_t - g_t \|_{L^1(M)} \le C(T) \, \| f_0 - g_0 \|_{L^1(M)},
$$
for some positive $C(T)$. In particular, equation~\eqref{eq:FhNs} with initial datum in $L^1(M^2) \cap L^1 \log L^1\cap \Pp(\R^2)$ has, at most, one solution.
\end{cor}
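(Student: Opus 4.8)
The plan is to reduce the estimate to a Gronwall argument for the difference $h_t := f_t - g_t$. Subtracting the weak formulations~\eqref{eq:Solution} satisfied by $f$ and $g$, and using that $B_\eps$ depends affinely on its $\JJJ$-argument — precisely $B_\eps(\JJJ_1) - B_\eps(\JJJ_2) = \eps\,(\JJJ_1 - \JJJ_2)$, see~\eqref{eq:FhNs3} — one finds that $h$ solves, in the sense of distributions, the \emph{linear} equation
\[
\partial_t h = Q_\eps[\JJJ_{f_t}]\,h + \eps\,\big(\JJJ_{f_t} - \JJJ_{g_t}\big)\,\partial_v g_t ,
\]
namely the FhN operator with the coefficient $\JJJ$ frozen along $f$, driven by a source proportional to $\partial_v g_t$. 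One first checks this source is meaningful: $\partial_v g_t = (\partial_v g_t/\sqrt{g_t})\,\sqrt{g_t}$ lies in $L^1(\R^2)$ as soon as the partial Fisher information $I_v(g_t)$ is finite, which holds for a.e.\ $t$ by~\eqref{eq:entropy}; and $h_t\in L^1(M)$ for every $t$ since $f_t,g_t\in C([0,\infty);L^1(M^2))\subset C([0,\infty);L^1(M))$.

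Next I would run an $L^1(M)$ estimate on $h$, copying the mechanism of the proof of Lemma~\ref{lemma:L1M}: test the equation against $\sign(h_t)\,M$, use Kato's inequality $\sign(h)\,\partial_{vv}^2 h \le \partial_{vv}^2|h|$, and observe that $Q_\eps^*[\JJJ_{f_t}]\,M = 1 - A\,x - B_\eps(\JJJ_{f_t})\,v \le K_1 - K_2\,M$ pointwise, the constants being uniform in $t$ because $|\JJJ_{f_t}|\le C_0'$ by~\eqref{eq:mufbdd}. Since $M\ge 1$, the coefficient term contributes at most $K_1\,\|h_t\|_{L^1(M)}$, while the coupling factor is controlled by $|\JJJ_{f_t}-\JJJ_{g_t}| = \big|\int_{\R^2} v\,h_t\big| \le \|h_t\|_{L^1(M)}$ because $|v|\le M$. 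This is exactly where the weight $M$ is indispensable: a bare $L^1$ estimate does not see the first velocity moment hidden in $\JJJ_{f_t}-\JJJ_{g_t}$. One is thus led to
\[
\frac{d}{dt}\,\|h_t\|_{L^1(M)} \le \big(K_1 + \eps\,\psi(t)\big)\,\|h_t\|_{L^1(M)}, \qquad \psi(t) := \int_{\R^2}|\partial_v g_t|\,M .
\]

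The remaining — and genuinely central — point is to show $\psi\in L^1(0,T)$. Here I would use Cauchy--Schwarz in $(x,v)$, $\psi(t) \le I_v(g_t)^{1/2}\,\|g_t\|_{L^1(M^2)}^{1/2}$, then the boundedness of $t\mapsto\|g_t\|_{L^1(M^2)}$ on $[0,T]$ (continuity of $g$ in $L^1(M^2)$, or an \emph{a priori} bound in the spirit of~\eqref{eq:L1H} with weight $M^2$) and, crucially, $\int_0^T I_v(g_t)\,dt<\infty$ from~\eqref{eq:entropy}, together with Cauchy--Schwarz in time; this gives $\int_0^T\psi \le (\sup_{[0,T]}\|g_t\|_{L^1(M^2)})^{1/2}\,T^{1/2}\,(\int_0^T I_v(g_t)\,dt)^{1/2}<\infty$. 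Gronwall's lemma then yields $\sup_{[0,T]}\|f_t-g_t\|_{L^1(M)}\le C(T)\,\|f_0-g_0\|_{L^1(M)}$ with $C(T)=\exp\big(K_1 T + \eps\|\psi\|_{L^1(0,T)}\big)$, and taking $f_0=g_0$ gives uniqueness. The only other thing needing care is that the Kato/energy computation above is formal for weak (renormalised) solutions, so it should be performed on a suitable regularisation of $|h_t|$ and passed to the limit, exactly as is implicit in the $L^1(m)$ and entropy lemmas preceding this corollary; beyond this standard point, controlling the source term via the Fisher information is the real obstacle.
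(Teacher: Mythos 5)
Your proposal is correct and follows essentially the same route as the paper: write the difference equation $\partial_t(f_t-g_t)=Q_\eps[\JJJ_{f_t}](f_t-g_t)+\eps\,\JJJ(f_t-g_t)\,\partial_v g_t$, estimate in $L^1(M)$ with $|\JJJ(f_t-g_t)|\le\|f_t-g_t\|_{L^1(M)}$, bound the source by $I_v(g_t)^{1/2}\|g_t\|_{L^1(M^2)}^{1/2}$ via Cauchy--Schwarz, and conclude by the time integrability of $I_v(g_t)$ from the entropy estimate together with Gronwall's lemma. The extra care you take about regularisation and the $L^1(M^2)$ bound on $g_t$ matches what the paper leaves implicit or states briefly.
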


\noindent{\sl Proof of Corollary~\ref{cor:Estimf-g}. } We write 
$$
\partial_t (f_t-g_t) = Q_\eps[\JJJ(f_t)]\,(f_t-g_t) + \eps\,\JJJ(f_t-g_t)\, \partial_vg_t
$$
from which we deduce
\bean
{d \over dt} \int_{\R^2} |f_t-g_t| M
&\le& K_1 \int_{\R^2} |f_t-g_t| M + \eps\, |\JJJ(f_t-g_t)| \int_{\R^2} |\partial_v g_t|M
\\
&\leq&K_1 \int_{\R^2} |f_t-g_t| M + \eps\, |I(g_t)|^{1/2}\|g_t\|^{1/2}_{L^1(M^2)}\int_{\R^2}|f_t-g_t|M,
\eean
where $K_1$ is the constant introduced in the proof of Lemma~\ref{lemma:L1M}. Also, it is not hard to see that
$$
 \sup_{t\in[0,T]}\|g_t\|_{L^1(M^2)} \leq \|g_0\|_{L^1(M^2)}+2(K_1+1)T\max(C_0,\|g_0\|_{L^1(M)}).
$$
The rest of the proof is a direct application of the time integrability of $I_v(g_t)$ and Gronwall's lemma.
\qed
 \bigskip

 Let us finish this section by giving some insights of the proofs of the existence of solutions and stationary solutions to equation~\eqref{eq:FhNs} which are, however, classical.
 
 \begin{proof}[Proof of Theorem~\ref{th:E&U&B}]
 
 Let us consider an exponential weight $m$ and $\JJJ\in L^\infty(\R_+)$ such that 
 $$
 \sup_{t\geq0}|\JJJ|\leq C_0',
 $$ 
 where $C_0'$ is given by~\eqref{eq:mufbdd}. First, to avoid the non boundedness of the coefficients of the equation, let us fix $R>0$, and define a regular truncation function
\begin{equation}\label{eq:chiTruncation}
\chi_R(x,v) = \chi(x/R,v/R),\quad\chi \in \DD(\R^2),\quad{\bf 1}_{B(0,1)} \le \chi \le {\bf 1}_{B(0,2)}.
\end{equation}
Secondly, to avoid the intrinsic degenerate character of~\eqref{eq:FhNs}, we fix some $1>\sigma>0$, and define the bilinear form
\begin{eqnarray*}
a_\sigma(t;f,g) & := & \langle\partial_vf,\partial_v g\rangle_{L^2(m)}+\langle \partial_vf,g\,\chi_R\,m^{-2}\partial_{v} m^2\rangle_{L^2(m)}
\\
&&
\quad+\sigma\,\langle\partial_xf,\partial_xg\rangle_{L^2(m)}+\sigma\,\langle \partial_xf,g\,\chi_R\,m^{-2}\partial_{x} m^2\rangle_{L^2(m)}
\\
&&
\quad\quad-\frac12\langle f,g\,\chi_R\,[\partial_x A-A\,m^{-2}\partial_xm^2]\rangle_{L^2(m)}
\\
&&
\quad\quad\quad-\frac12\langle f,g\,\chi_R\,[\partial_v B_\eps(\JJJ_t)-B_\eps(\JJJ_t)\,m^{-2}\partial_vm^2]\rangle_{L^2(m)}.
\end{eqnarray*}
This bilinear form is obviously well defined, a.e. $t\geq0$, for any $f,g\in H^1(m)$. Moreover, $a_\sigma$ is continuous,
$$
|\,a_\sigma(t;f,g)|\,\,\leq\,\, C_R\|f\|_{H^1(m)}\|g\|_{H^1(m)},
$$
for some positive constant $C_R$, and coercive. Indeed, we have from~\eqref{eq:one}, that
$$
 a_\sigma(t;f,f)\,\,\geq\,\,\frac12\|\partial_{v} f\|^2_{L^2(m)}+\frac\sigma2\|\partial_{x} f\|^2_{L^2(m)}-k_1\|f\|^2_{L^2(m)},
$$
for some $k_1>0$ not depending on $t$, nor on $R$ and nor on $\sigma$. The J. L. Lions theorem~\cite[Theorem X.9]{brezis:83} implies that for any $f_0\in L^2(m)$ there exists a unique
$$
 f\in L^2((0,\infty); H^1(m))\cap C([0,\infty); L^2(m));\qquad \frac{d}{dt}f\in L^2((0,\infty);H^1(m)')
$$
such that $f(0)=f_0$ and
$$
\langle \frac{d}{dt}f, g\rangle_{L^2(m)}+a_\sigma(f(t),g)\,\,=\,\,0,\qquad \text{a.e. }t\geq0,\,\forall\, g\in H^1(m).
$$

We recall that $f_-:=\min(f,0)$ belongs to $H^1(m)$, therefore we can use it as a test function to find that
$$
f_0\geq0\qquad \Rightarrow\qquad f(t)\geq0,\qquad \text{a.e. }t\geq0.
$$
Let us now fix some $T>0$. Using $f$ itself as a test function, we get easily that
$$
 \|f_t\|_{L^2(m)}^2+\int_0^T\|\partial_v f_s\|_{L^2(m)}^2\,ds\,\,\leq\,\,e^{k_1T}\|f_0\|^2_{L^2(m)},
$$
therefore, one can take the limits $\sigma\rightarrow0$ and $R\rightarrow\infty$, to find that for any $\varphi\in C^1([0,T]; C_c^2(\R^2))$ 
$$
\int_{\R^2} \varphi_t f_t = \int_{\R^2}\varphi_0 f_0 +\int_0^t\int_{\R^2} \big[\partial_t\varphi_s+\partial^2_{vv}\varphi_s-A\,\partial_x\varphi_s-B_\eps(\JJJ_s)\partial_v\varphi_s\big] f_s\, ds,\quad 0< t<T,
$$
holds.
Taking a well chosen sequence $\varphi^n\rightarrow M^2$, we deduce that
$$
 \sup_{t\in[0,T]}\|f_t\|_{L^1(M^2)}\,\,\leq\,\, \max\big(C',\|f_0\|_{L^1(M^2)}\big),
$$
for some positive constant $C'$ that depends only on the parameters of the system. We also notice that, thanks to renormalisation concepts, we recover the inequality
$$
 \sup_{t\in[0,T]}\HHH(f_t)+\int_0^t I_v(f_s)\,ds\,\,\leq\,\, \HHH(f_0)+K_0 T\max(C_0,\|f_0\|_{L^1(M)}).
$$

Let us take now $f_0\in L^1(M^2)\cap L^1\log L^1\cap \Pp(\R^2)$, and a sequence $\{f_{n,0}\}\subset L^2(m)$ such that $f_{n,0}\rightarrow f_0$ in $L^1(M)$. Moreover, let us assume that there is a positive constant $C>0$ such that $\HHH(f_{n,0})\leq C$, for any $n\in\N$. From the previous analysis we get a family $\{f_{n}\}\in C((0,T);L^1(M))$ of functions related to the initial conditions $\{f_{n,0}\}$. Using the Dunford-Pettis criterium we can pass to the limit in $L^1(M)$ finding a solution to the linear problem
\begin{equation}\label{eq:lineal}
 \partial_t f\,\,=\,\,\partial_x(Af)+\partial_v(B_\eps(\JJJ_t)f)+\partial^2_{vv}f.
\end{equation}
that depends continuously to the initial datum (in the sense defined in Theorem~\ref{th:E&U&B}). Moreover, from Corollary~\ref{cor:Estimf-g} we get that this solution is necessarily unique.

Finally,  we use again the ideas of Corollary~\ref{cor:Estimf-g} to find a solution to the NL equation~\eqref{eq:FhNs}. Indeed, it suffices to notice that the mapping
\[
\left\{\begin{array}{rcl}
 L^\infty([0,T]) &\longrightarrow& C([0,T]; L^1(M^2))
 \\
 \JJJ &\longmapsto& f,
 \end{array}
 \right.
\]
with $f$ solution of~\eqref{eq:lineal} for $\JJJ$ given, is Lipschitz and contracting when $T>0$ is small enough.
 \end{proof}


Existence of stationary solutions will be shown as a result of an abstract version of the Brouwer fixed point theorem (a variant of~\cite[Theorem 1.2]{EMRR} and \cite{MR2533928}):

\begin{theo}\label{th:AbsStat} Consider $\ZZ$ a convex and compact subset of a Banach space $X$ and $S(t)$ a continuous semigroup on $\ZZ$. Let us assume that $\ZZ$ is invariant under the action of $S(t)$ (that is $S(t) z \in \ZZ$ for any $z \in \ZZ$ and $t \ge 0$). Then, there exists $z_0 \in \ZZ$ which is stationary under the action of $S(t)$, i.e, $S(t) z_0 = z_0$ for any $t \ge 0$.
\end{theo}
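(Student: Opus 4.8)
The plan is to deduce this from the classical Brouwer--Schauder--Tychonoff fixed point theorem by a standard discretization-and-diagonal argument. First I would fix a sequence of times $t_n \to 0$, say $t_n = 1/n$, and for each $n$ consider the map $S(t_n) : \ZZ \to \ZZ$. Since $\ZZ$ is convex and compact and $S(t_n)$ is continuous and maps $\ZZ$ into itself, the Schauder fixed point theorem (or Tychonoff, in the non-metrizable case; here $X$ is a Banach space so Schauder suffices) yields a point $z_n \in \ZZ$ with $S(t_n) z_n = z_n$. Iterating the semigroup property, $S(k t_n) z_n = z_n$ for every $k \in \N$, i.e.\ $z_n$ is fixed by $S(t)$ for every $t$ in the dense subset $D_n := \{k/n : k \in \N\}$ of $\R_+$.

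Next I would extract a convergent subsequence: by compactness of $\ZZ$ there is a subsequence, still denoted $z_n$, with $z_n \to z_0 \in \ZZ$. I claim $S(t) z_0 = z_0$ for all $t \ge 0$. Fix $t > 0$ and $\eta > 0$. For each $n$ pick $k_n \in \N$ with $s_n := k_n/n \in D_n$ and $|s_n - t| \le 1/n$; then $S(s_n) z_n = z_n$. Writing
$$
\|S(t) z_0 - z_0\|_X \le \|S(t) z_0 - S(t) z_n\|_X + \|S(t) z_n - S(s_n) z_n\|_X + \|z_n - z_0\|_X,
$$
the first and third terms tend to $0$ by continuity of $S(t)$ and $z_n \to z_0$. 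For the middle term I would use joint continuity of the flow $(t,z) \mapsto S(t) z$ on $\R_+ \times \ZZ$ — which follows from strong continuity of the semigroup together with the uniform bound coming from compactness of $\ZZ$ — applied along the convergent data $(s_n, z_n) \to (t, z_0)$ and $(t, z_n) \to (t, z_0)$, to conclude $\|S(t) z_n - S(s_n) z_n\|_X \to 0$. Hence $\|S(t) z_0 - z_0\|_X = 0$.

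The main technical point to be careful about is precisely this last equicontinuity/joint-continuity step: a priori we only know $t \mapsto S(t) z$ is continuous for each fixed $z$ and $z \mapsto S(t) z$ is continuous for each fixed $t$, and we need some uniformity to let both arguments move simultaneously. The resolution is that on the compact invariant set $\ZZ$ the family $\{S(t)\}_{t \in [0,1]}$ is in fact uniformly equicontinuous on $\ZZ$ and the orbit maps are uniformly continuous in $t$ uniformly over $\ZZ$ — an Ascoli-type argument using compactness of $\ZZ$ — which upgrades separate continuity to joint continuity. Alternatively, and perhaps more cleanly, one can avoid the issue entirely: the nonempty closed sets $\mathrm{Fix}(S(t_n)) \subset \ZZ$ form, after passing to the fixed points of $S(1/n!)$ (so that they are nested: $\mathrm{Fix}(S(1/(n+1)!)) \subset \mathrm{Fix}(S(1/n!))$ since $1/n!$ is an integer multiple of $1/(n+1)!$), a decreasing sequence of nonempty compact sets, hence have nonempty intersection; any $z_0$ in the intersection satisfies $S(t) z_0 = z_0$ for all $t$ in the dense set $\bigcup_n \{k/n! : k \in \N\}$, and then continuity of $t \mapsto S(t) z_0$ for the single fixed point $z_0$ finishes the proof. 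This second route is the one I would actually write down, as it uses only separate continuity.
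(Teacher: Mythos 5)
Your second route is correct and complete, and it is the one to keep: applying Schauder (Brouwer--Schauder--Tychonoff) to each map $S(1/n!)$ on the convex compact invariant set $\ZZ$ gives nonempty closed fixed-point sets $F_n=\{z\in\ZZ:\ S(1/n!)z=z\}$, which are nested because $1/n!$ is an integer multiple of $1/(n+1)!$, so compactness yields $z_0\in\bigcap_n F_n$; then $S(t)z_0=z_0$ on the dense set of times $k/n!$ and the (separate) continuity of $t\mapsto S(t)z_0$ concludes. Note that the paper itself does not prove Theorem~\ref{th:AbsStat}: it is stated as a known abstract variant of the Brouwer/Schauder fixed point theorem and attributed to \cite[Theorem 1.2]{EMRR} and \cite{MR2533928}, where essentially this discretization-plus-Schauder argument is carried out, so your write-up supplies a self-contained proof in the same spirit as the cited source. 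One caution on your first sketched route: the claim that separate continuity of $(t,z)\mapsto S(t)z$ upgrades to joint continuity (or equicontinuity of $\{S(t)\}_{t\in[0,1]}$) on a compact set is not automatic --- Ellis-type theorems of this kind require a group action or additional hypotheses, and a compactness-of-$\ZZ$ argument alone does not deliver it --- so that variant has a genuine gap; since your final argument uses only the second route, the proof stands.
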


We present the argument briefly in this section. Our aim is to find a fixed point for the nonlinear semigroup $S_{Q_\eps}(t)$ related to equation~\eqref{eq:FhNs}. At this point we do not have any hint on the number of functions solving
$$
 Q_\eps[\JJJ_F]\,F=0,
$$
and the nonlinearity could lead to the presence of more than one. However, in the disconnected regime $\eps=0$ the nonlinearity disappears, and the multiplicity problem is no longer present. 

\begin{proof}[Proof of existence of stationary solutions to~\eqref{eq:StatExist}]

Let us fix $m$ an exponential weight and define for any $t\geq0$
$$
S(t): X \,\rightarrow\, X\quad\text{with}\quad X\,=\, H^2_v(m)\cap L^1\log L^1\cap\Pp(\R^2),
$$
such that $S(t)f_0$ is the solution to~\eqref{eq:FhNs} given by Theorem~\ref{th:E&U&B} associated to the initial condition $f_0$. Estimates~\eqref{eq:H2vm} and~\eqref{eq:entropy} imply that $S(t)$ is well defined. Moreover, the continuity of $S$ in the Banach space $L^1(\R^2)$ is direct from the definition of weak solutions, in particular,
$$
S(t)f_0\in C([0,\infty);L^1(\R^2)),
$$ 
with the topology of compact subsets in time.

Finally, defining 
$$
\ZZ:=\ZZ(\eps)=\{ f\in X\mbox{ such that~\eqref{eq:L1H} and~\eqref{eq:H2vm} hold}\}\subset L^1(\R^2),
$$
which is invariant under $S_t$ for any $t\geq0$ and convex. Moreover, the compactness of the inclusion
$
 \ZZ\subset H^1(m)\hookrightarrow L^1(\R^2)
$ 
allows us to apply Theorem~\ref{th:AbsStat} and find the existence of a fixed point for $S(t)$ and by consequence a stationary solution to~\eqref{eq:FhNs}.

It is worth emphasising that the above proof show yet that the map $\eps \mapsto G_\eps$ is locally bounded in $[0,\infty)$, i.e., if $\eps_0>0$ is fixed, then $$
G_\eps\in \ZZ(\eps_0)\quad\text{ for any }\quad\eps\in(0,\eps_0).
$$
 \end{proof}



\section{The linearized equation} 
\label{sec:UniqStatSol}
\setcounter{equation}{0}
\setcounter{theo}{0}

The aim of the present section is to undercover the properties of the linearized operator associated to $Q_\eps$ in the small connectivity case using what we call a \textit{splitting method}. 
To illustrate the ideas we use, let us assume that an operator $\Lambda$ on a Banach space $X$ can be written as
$$
 \Lambda\,=\,\AA+\BB,
$$
where $\AA$ is \emph{much more regular than $\BB$}, and $\BB$ has some \emph{dissipative property}. If $\BB$ has a good localisation of its spectrum, under some reasonable hypotheses on $\AA$, we expect $\Sigma(\Lambda)$ to be close to $\Sigma(\BB)$.

This is nothing but the Weyl's abstract theorem (and/or the generalisation of the Krein-Rutman theorem) from Mischler and Scher~\cite{MS*}, that we recall here:
\begin{theo}\label{th:KRG}
 We consider a semigroup generator $\Lambda$ on a ``Banach lattice of functions'' $X$, and we assume that
 \begin{enumerate}
  \item there exists some $\alpha^*\in\R$ and two operators $\AA,\BB\in\CCC(X)$, such that $\Lambda=\AA+\BB$ and
  \begin{enumerate}
  \item for any $\alpha>\alpha^*,\ell\geq0$, there exists a constant $C_{\alpha,\ell}>0$ such that 
  $$\forall\,t\geq0,\qquad \|S_\BB*(\AA S_\BB)^{(*\ell)}(t)\|_{\BBB(X)}\leq C_{\alpha,\ell}\,e^{\alpha t}.
  $$
  \item$\AA$ is bounded, and there exists an integer $n\geq1$ such that for any $\alpha>\alpha^*$, there exists a constant $C_{\alpha,n}>0$ such that
  $$
  \forall\,t\geq0,\qquad \|(\AA S_\BB)^{(*n)}(t)\|_{\BBB(X,Y)}\leq C_{\alpha,n} e^{\alpha t},
  $$ 
  with $Y\subset D(\Lambda)$ and $Y\subset X$ with compact embedding;
  \end{enumerate}
  \item for $\Lambda^*$ the dual operator of $\Lambda$ defined in $X'$, there exists $\beta>\alpha^*$ and $\psi\in D(\Lambda^*)\cap X'_+\setminus\{0\}$ such that $$\Lambda^*\psi\geq \beta\psi;$$
  \item $S_\Lambda$ satisfies Kato's inequalities, i.e,
  $$
   \forall\, f\in D(\Lambda),\quad \Lambda \theta(f)\geq \theta'(f)\,\Lambda f,
  $$
  holds for $\theta(s)=|s|$ or $\theta(s)=s_+$.
  \item $-\Lambda$ satisfies a strong maximum principle: for any given $f$ and $\gamma\in\R$, there holds,
  $$
   |f|\in D(\Lambda)\setminus\{0\}\text{ and }(-\Lambda+\gamma)|f|\geq0\quad\text{ imply }\quad f>0\text{ or }f<0.
  $$
 \end{enumerate}
Defining 
$$
 \lambda\,:=\,s(\Lambda)=\sup\big\{\re(\xi)\,\,:\,\,\xi\in\Sigma(\Lambda)\big\},
$$
 there exists $0<f_\infty\in D(\Lambda)$ and $0<\phi\in D(\Lambda^*)$ such that 
$$
 \Lambda f_\infty=\lambda\, f_\infty,\qquad\Lambda^*\phi=\lambda\,\phi.
$$ 
Moreover, there is some $\bar\alpha\in(\alpha^*,\lambda)$ and $C>0$ such that for any $f_0\in X$ $$\|S_\Lambda(t)f_0-e^{\lambda t}\langle f_0,\phi\rangle f_\infty\|_X\leq C e^{\bar\alpha t}\|f_0-\langle f_0,\phi\rangle f_\infty\|_X.$$
\end{theo}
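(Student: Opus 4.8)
The plan is to obtain the statement from two pieces: a \emph{quantitative spectral mapping} for the pair $(\Lambda,S_\Lambda)$ built on the factorisation $\Lambda=\AA+\BB$, which localises $\Sigma(\Lambda)$ inside $\Delta_{\alpha^*}$ up to a discrete set and produces a splitting of the semigroup, and a \emph{Krein--Rutman / Perron--Frobenius} argument exploiting the positivity hypotheses (2)--(4), which upgrades the spectral bound $\lambda=s(\Lambda)$ to a simple dominant eigenvalue carrying the full long--time asymptotics. I would start from the Duhamel identity $S_\Lambda=S_\BB+S_\Lambda*(\AA S_\BB)$, iterate it $n$ times into $S_\Lambda=\sum_{\ell=0}^{n-1}S_\BB*(\AA S_\BB)^{(*\ell)}+S_\Lambda*(\AA S_\BB)^{(*n)}$, and take Laplace transforms to get, on $\rho(\Lambda)\cap\Delta_\alpha$ for any $\alpha>\alpha^*$, the resolvent identity
\[
\RR_\Lambda(z)=\sum_{\ell=0}^{n-1}(-1)^\ell\,\RR_\BB(z)\big(\AA\,\RR_\BB(z)\big)^\ell+(-1)^n\,\RR_\Lambda(z)\big(\AA\,\RR_\BB(z)\big)^n .
\]
Hypothesis (1)(a) with $\ell=0$ gives $\|S_\BB(t)\|_{\BBB(X)}\le C_\alpha e^{\alpha t}$, hence $\Sigma(\BB)\subset\{\re z\le\alpha^*\}$ and $z\mapsto\RR_\BB(z)$ is holomorphic and bounded on $\Delta_\alpha$; hypothesis (1)(b), after Laplace transform, makes $z\mapsto(\AA\,\RR_\BB(z))^n$ a holomorphic family of \emph{compact} operators on $X$ (compactness coming from the factorisation $X\to Y\hookrightarrow X$ with compact embedding). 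Rewriting the identity as $\RR_\Lambda(z)\,(I-K(z))=T(z)$ with $K(z)=(-1)^n(\AA\,\RR_\BB(z))^n$ compact and $T(z)$ a known bounded holomorphic family, the analytic Fredholm theorem shows that $\RR_\Lambda(z)$ extends meromorphically to $\Delta_\alpha$ with finite-rank principal parts; equivalently $\Sigma(\Lambda)\cap\Delta_\alpha$ is a finite set of eigenvalues of finite algebraic multiplicity.

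The second step converts this into a semigroup decomposition. Hypothesis (1)(a) for all $\ell$, together with the convolution structure, yields decay of the $\RR_\BB(z)(\AA\RR_\BB(z))^\ell$ and of $T(z)$ along vertical lines $\re z=\alpha$ as $|z|\to\infty$, which combined with the meromorphic structure of $(I-K(z))^{-1}$ gives a uniform bound on $\RR_\Lambda(z)$ on such lines away from the finitely many poles. Representing $S_\Lambda(t)$ by the inverse Laplace transform and shifting the contour from $\{\re z=\alpha'\}$, $\alpha'\gg1$, down to $\{\re z=\alpha\}$, collecting the residues at $\xi\in\Sigma(\Lambda)\cap\Delta_\alpha$, one obtains
\[
S_\Lambda(t)=\sum_{\xi\in\Sigma(\Lambda)\cap\Delta_\alpha}e^{\xi t}\,\Pi_\xi(t)+R_\alpha(t),\qquad \|R_\alpha(t)\|_{\BBB(X)}\le C_\alpha\,e^{\alpha t},
\]
where each $\Pi_\xi(t)$ is finite rank, polynomial in $t$, supported on the generalised eigenspace of $\xi$. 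Hypothesis (2), $\Lambda^*\psi\ge\beta\psi$ with $0\ne\psi\in X'_+$ and $\beta>\alpha^*$, forces $\langle S_\Lambda(t)f_0,\psi\rangle$ to grow at least like $e^{\beta t}$ for a suitable $0\le f_0$, so $s(\Lambda)\ge\beta>\alpha^*$; since $\Sigma(\Lambda)\cap\Delta_{\alpha^*}$ is discrete, the supremum $\lambda:=s(\Lambda)$ is attained and is an eigenvalue.

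Now the positivity is brought to bear in full. Kato's inequalities (3) make $S_\Lambda(t)$ positivity preserving, and the strong maximum principle (4) then permits the classical Krein--Rutman reasoning: the eigenfunction $f_\infty$ associated with $\lambda$ can be taken strictly positive, so can the dual eigenfunction $\phi$, the eigenvalue $\lambda$ is algebraically simple, and there is no other point of $\Sigma(\Lambda)$ on the line $\re z=\lambda$ (a peripheral eigenvalue $\lambda+i\tau$, $\tau\ne0$, would produce an eigenfunction that changes sign, contradicting (4), or contradicts the strict positivity of $f_\infty$ via an averaging argument). Because $\Sigma(\Lambda)\cap\Delta_\alpha$ is finite for each $\alpha>\alpha^*$, we may choose $\bar\alpha\in(\alpha^*,\lambda)$ with $\Sigma(\Lambda)\cap\Delta_{\bar\alpha}=\{\lambda\}$. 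Specialising the decomposition to this $\bar\alpha$ leaves only $e^{\lambda t}\Pi$, with $\Pi$ the spectral projection onto $\mathrm{span}(f_\infty)$, which is rank one by simplicity; normalising $\langle f_\infty,\phi\rangle=1$ gives $\Pi f_0=\langle f_0,\phi\rangle f_\infty$. Setting $g_0:=f_0-\langle f_0,\phi\rangle f_\infty=(I-\Pi)f_0$ and using $\Pi g_0=0$, we have $S_\Lambda(t)f_0-e^{\lambda t}\langle f_0,\phi\rangle f_\infty=S_\Lambda(t)g_0=R_{\bar\alpha}(t)g_0$, whence
\[
\|S_\Lambda(t)f_0-e^{\lambda t}\langle f_0,\phi\rangle f_\infty\|_X\le C\,e^{\bar\alpha t}\,\|f_0-\langle f_0,\phi\rangle f_\infty\|_X ,
\]
which is the asserted estimate.

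I expect the principal obstacle to be the quantitative spectral mapping of the second step: turning the qualitative factorisation hypotheses (1)(a)--(1)(b) into the precise claim that, for \emph{every} $\alpha>\alpha^*$, $S_\Lambda$ differs from a finite exponential--polynomial sum by a remainder bounded by $C_\alpha e^{\alpha t}$. This needs careful resolvent estimates on vertical lines — in particular uniform decay in the imaginary direction, extracted from the convolution structure in the spirit of Proposition~\ref{prop:semigroupGeneral} — to legitimise the contour shift in the inverse Laplace representation of $S_\Lambda$; by contrast the Krein--Rutman refinement, although its treatment of peripheral eigenvalues is delicate, is comparatively standard once the spectrum near $s(\Lambda)$ is known to be discrete.
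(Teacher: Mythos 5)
The paper does not prove Theorem~\ref{th:KRG}: it is quoted as an abstract result of Mischler and Scher~\cite{MS*}, so there is no internal proof to compare against. Your outline is in fact the strategy of that cited work (a Weyl-type localisation of $\Sigma(\Lambda)$ via the factorisation $\Lambda=\AA+\BB$ and the iterated Duhamel/resolvent identity, then a Krein--Rutman upgrade using the positivity hypotheses (2)--(4)), and the skeleton is sound; the first step, compactness of $(\AA\RR_\BB(z))^n$ through $Y\hookrightarrow X$ and analytic Fredholm, is correctly set up. But as a proof your text has two genuine gaps.

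First, the quantitative spectral mapping does not work as literally written. You represent $S_\Lambda(t)$ itself by an inverse Laplace integral and shift the contour to $\{\re(z)=\alpha\}$; in a general Banach space the vertical-line integral of $\RR_\Lambda(z)$ need not converge, and the bounds furnished by (1)(a) give boundedness of the relevant Laplace transforms but no decay as $|\mathrm{Im}\,z|\to\infty$. The standard repair (the one used in~\cite{GMM,MS*}) is to keep the terms $S_\BB*(\AA S_\BB)^{(*\ell)}$, $\ell\le n-1$, in the time domain, where (1)(a) already gives the $C_\alpha e^{\alpha t}$ bound, and to apply Laplace inversion and the residue/contour argument only to the iterated remainder $S_\Lambda*(\AA S_\BB)^{(*n)}$, extracting the decay of $(\AA\RR_\BB(z))^n$ along vertical lines from the time-regularity and vanishing at $t=0$ of the $n$-fold convolution (integration by parts in the Laplace integral), in the spirit of Proposition~\ref{prop:semigroupGeneral}. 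You correctly flag this as the obstacle, but you do not resolve it, and without that the decomposition $S_\Lambda=\sum e^{\xi t}\Pi_\xi(t)+R_\alpha(t)$ is unproved. Second, the Krein--Rutman half is asserted rather than carried out, and that is exactly where hypotheses (3) and (4) are consumed: positivity of $S_\Lambda$ from Kato's inequality, strict positivity of $f_\infty$ and of the dual eigenvector $\phi$ from the strong maximum principle, algebraic simplicity of $\lambda$ (if $(\Lambda-\lambda)g=f_\infty$, pairing with $\phi>0$ gives $\langle f_\infty,\phi\rangle=0$, a contradiction), and triviality of the boundary spectrum: for $\Lambda g=(\lambda+i\tau)g$, Kato's inequality yields $\Lambda|g|\ge\lambda|g|$, pairing with $\phi$ forces equality, hence $|g|$ is a positive multiple of $f_\infty$, and the equality case excludes $\tau\neq0$. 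Your one-line appeal to ``classical Krein--Rutman reasoning'' leaves the simplicity of $\lambda$, the identity $\Sigma(\Lambda)\cap\{\re(z)=\lambda\}=\{\lambda\}$, and hence the existence of the gap $\bar\alpha$ unproved; filling in these two blocks would make your argument a faithful reconstruction of the cited theorem.
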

\bigskip


 From Theorem~\ref{th:StatExist} we know that for any value of $\eps$ there exists at least one $G_\eps$ non zero stationary solution of the FhN kinetic equation~\eqref{eq:FhNs}. The linearized equation, on the variation $h\,:=\, f-G_\eps$, 
induces the linearized operator
$$
\LLL_\eps h =   Q_\eps(\JJJ(G_\eps))h + \eps \,  \JJJ(h) \partial_vG_\eps.
$$
Moreover, let us recall that in Section~\ref{sec:estim} we proved that
$$
 \langle Q_\eps[\JJJ(G_\eps)]\,f,f\rangle_{L^2(m)}\,\leq\, K_1\|f\|_{L^2(\R^2)}-K_2\|f\|_{L^2(m)},
$$
if we could make $K_1=0$, then the operator $Q_\eps$ together with $\LLL_\eps$ would be dissipative. Since it is not the case, let us fix a constant $N>0$ and define
\begin{equation}\label{eq:splitAB}
 \BB_\eps\,:=\,\LLL_\eps - \AA, \quad\text{where}\quad \AA = N\, \chi_R(x,v);
\end{equation}
with $\chi_R$ given by~\eqref{eq:chiTruncation}. We remark that $\AA \in \BBB(H^2_v(m))$, and that $\AA f$ vanishes outside a ball of radius $2R$ for any $f\in H^2_v(m)$.
\smallskip

\subsection{Properties of $\AA$ and $\BB_\eps$} We now precise the dissipative properties of $\LLL_\eps$. In particular, we present two lemmas dealing with the hypodissipativity and regularisation properties of the sppliting $\AA$ and $\BB_\eps$. We use some ideas developed in~\cite{MMcmp,GMM} and~\cite{MM*}.

\begin{lem}\label{lemma:Bdiss}
 For any exponential weight $m$, there exist some constants $N,R>0$ such that $(\BB_\eps+1)$ is hypodissipative in $H^2_v(m)$.
\end{lem}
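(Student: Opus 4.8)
The plan is to establish hypodissipativity of $(\BB_\eps+1)$ on $H^2_v(m)$ by choosing the constants $N$ and $R$ large enough so that the dissipativity estimates already obtained for $Q_\eps$ in Section~\ref{sec:estim} produce a strictly negative drift after the shift by $1$. Recall that $\BB_\eps = \LLL_\eps - N\chi_R = Q_\eps[\JJJ(G_\eps)] + \eps\,\JJJ(\cdot)\,\partial_vG_\eps - N\chi_R$. The key observation is that in all the estimates \eqref{eq:one}, \eqref{eq:two}, \eqref{eq:three}, \eqref{eq:four} and \eqref{extra}, the ``bad'' positive terms of the form $k_1\|f\|^2_{L^2(\R^2)}$ (and similarly for the derivatives) come from a bounded region of phase space: the full drift polynomial $p(x,v)$ has leading term $v^4+x^2$, so $-p(x,v) \le k_1 - k_2 M$ with the constant $k_1$ concentrated on $\{M \le k_1/k_2\}$, which is contained in a fixed ball. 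Thus by taking $R$ large enough that this ball lies in $\{\chi_R \equiv 1\}$, and then $N$ large, the term $-N\chi_R f^2 m_2^2$ absorbs $k_1\|f\|^2_{L^2(\R^2)}$ and likewise for the derivative terms.

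Concretely, I would proceed as follows. First, work in the twisted norm $\|\cdot\|_{\HH^2_v}$ introduced in Corollary after \eqref{extra} (equivalent to $\|\cdot\|_{H^2_v(m)}$), since the cross terms in \eqref{eq:two}, \eqref{eq:three}, \eqref{eq:four} have already been shown to be controlled in this norm. Second, compute $\langle \BB_\eps f, f\rangle_{\HH^2_v}$ by adding to $\langle Q_\eps[\JJJ(G_\eps)]f,f\rangle_{\HH^2_v}$ the extra nonlocal contribution $\eps\,\JJJ(f)\langle \partial_vG_\eps, f\rangle_{\HH^2_v}$, which by Cauchy--Schwarz and $|\JJJ(f)| \le \|f\|_{L^2(\R^2)} \le \|f\|_{\HH^2_v}$ together with the fact that $G_\eps \in \ZZ(\eps_0) \subset H^2_v(m)$ (boundedness from Theorem~\ref{th:StatExist} and its proof) is bounded by $\eps\, C_{G}\,\|f\|^2_{\HH^2_v}$ for a constant $C_G$ uniform on $(0,\eps_0)$, and the term $-N\langle \chi_R f, f\rangle_{\HH^2_v}$, which is $\le 0$ and on the region $\{\chi_R \equiv 1\}$ equals $-N\|f\|^2_{\HH^2_v(\mathbf 1_{B_R})}$ up to controlled commutator terms from differentiating $\chi_R$. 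Third, combine: from \eqref{eq:one}--\eqref{extra} and the Young's inequality bookkeeping already done in Section~\ref{sec:estim} one gets $\langle Q_\eps f,f\rangle_{\HH^2_v} \le K_1 \|f\|^2_{L^2(B_{R_0})} - K_2\|f\|^2_{\HH^2_v}$ for a fixed ball $B_{R_0}$; choosing $R \ge 2R_0$ so $\chi_R \equiv 1$ on $B_{R_0}$, and then $N$ so large that $N \ge K_1 + 2$ after accounting for the commutator and nonlocal contributions, and finally $\eps_0$ small enough that $\eps C_G \le 1$, yields
$$
\langle \BB_\eps f, f\rangle_{\HH^2_v} \le -(K_2 - 1)\|f\|^2_{\HH^2_v} \le -\|f\|^2_{\HH^2_v},
$$
i.e. $\langle (\BB_\eps+1)f,f\rangle_{\HH^2_v} \le 0$, which is exactly hypodissipativity of $\BB_\eps+1$ in $H^2_v(m)$ with respect to the equivalent norm $\HH^2_v$. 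The analogous (in fact simpler) computation gives the statement in $H^1(m)$ and $L^1(m)$ as well, if needed elsewhere.

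The main obstacle I anticipate is the careful treatment of the commutator terms generated when the multiplication operator $N\chi_R$ meets the derivatives in the $\HH^2_v$ inner product: terms like $N\langle \partial_v(\chi_R f), \partial_v f\rangle$ produce $N\int (\partial_v\chi_R)\, f\,\partial_v f\, m^2$, which are $O(N/R)$ in size against $\|f\|^2_{\HH^2_v}$ since $|\partial_v\chi_R| \le C/R$; one must choose $R$ large relative to $N$ to make these subdominant, while simultaneously needing $N$ large relative to $K_1$ — so the order of choosing constants matters and must be spelled out ($K_1$ and the ball $B_{R_0}$ first, then a relation tying $N$ and $R$, then $\eps_0$). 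A secondary technical point is ensuring all constants are genuinely uniform in $\eps \in (0,\eps_0)$: this uses $\JJJ(G_\eps)$ bounded (Lemma~\ref{lemma:L1M}, \eqref{eq:mufbdd}) and $\|G_\eps\|_{H^2_v(m)}$ bounded (the remark at the end of the proof of Theorem~\ref{th:StatExist}), both already available. Once these bookkeeping issues are handled the conclusion is immediate from the definition of hypodissipativity.
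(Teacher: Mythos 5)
Your overall strategy is the paper's: same splitting $\BB_\eps=Q_\eps[\JJJ_{G_\eps}]+\eps\JJJ(\cdot)\partial_vG_\eps-N\chi_R$, same reuse of the a priori estimates of Section~\ref{sec:estim}, absorption of the bounded-region positive terms by $N\chi_R$ with $N,R$ large, and conclusion through an equivalent twisted $H^2_v(m)$ norm. However there is a genuine gap in your second step, precisely where you dispose of the nonlocal rank-one term: you claim that $\eps\,\JJJ(f)\,\langle\partial_vG_\eps,f\rangle_{\HH^2_v}\le \eps\,C_G\|f\|_{\HH^2_v}^2$ ``by Cauchy--Schwarz and $G_\eps\in H^2_v(m)$''. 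But the $\HH^2_v$ (or $\bar H^2_v$) inner product contains the pieces $\langle\partial_x(\JJJ(f)\partial_vG_\eps),\partial_xf\rangle_{L^2(m)}$ and $\langle\partial^2_{vv}(\JJJ(f)\partial_vG_\eps),\partial^2_{vv}f\rangle_{L^2(m)}$, i.e.\ they involve $\partial^2_{xv}G_\eps$ and $\partial^3_{vvv}G_\eps$, neither of which is controlled by $G_\eps\in H^2_v(m)$ (recall $H^2_v(m)$ only gives $G_\eps\in H^1(m)$ with $\partial^2_{vv}G_\eps\in L^2(m)$). So the constant $C_G$ you invoke is not finite on the basis of the information you cite, and Cauchy--Schwarz alone cannot close this step.

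The paper's proof gets around exactly this point by integrating by parts in $v$: the term $\JJJ(h)\int(\partial^2_{xv}G_\eps)(\partial_xh)\,m^2$ is rewritten as $-\JJJ(h)\int\partial_xG_\eps\,[\partial^2_{xv}h+2\kappa v\,\partial_xh]\,m^2$, and $\JJJ(h)\int(\partial^3_{vvv}G_\eps)(\partial^2_{vv}h)\,m^2$ is treated analogously, moving the extra derivative onto $h$ and the weight. This produces $\|\partial^2_{xv}h\|_{L^2(m)}$ and $\|\partial^3_{vvv}h\|_{L^2(m)}$, which are \emph{not} controlled by $\|h\|_{\HH^2_v}$ either, but which can be absorbed (with a small Young parameter) into the dissipative terms $-\tfrac12\|\partial^2_{xv}h\|^2_{L^2(m)}$ and $-\|\partial^3_{vvv}h\|^2_{L^2(m)}$ generated by the $\partial^2_{vv}$ diffusion in \eqref{eq:two} and \eqref{extra}. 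In other words, the nonlocal term cannot be estimated as a bounded perturbation at the level of the norm; it must be handled inside the differential inequality where the extra regularization from the Laplacian is available. Two further, more minor, points: your final smallness requirements ($\eps C_G\le1$ and implicitly $K_2\ge2$) are unnecessary — since the confining polynomial $p(x,v)$ grows at infinity and $N$ is free, any decay rate (in particular the rate $1$ needed for $\BB_\eps+1$) can be reached by enlarging $N$ and $R$, uniformly in $\eps\in[0,\eps_0)$, which matters because the lemma is also used at $\eps=0$ and should not be tied to a smallness condition; and the commutator terms from $\chi_R$ need no constraint of the type ``$R$ large relative to $N$'': they are lower order, with constants depending on $N,R$, and are absorbed at the end by choosing the parameter $\delta$ in the twisted norm small \emph{after} $N$ and $R$ have been fixed.
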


\begin{proof}

From the characterisation of hypodissipativity given in Section~\ref{sec:summary}, it suffices to show that there exists a constant $C>0$ such that
$$
 \|S_{\BB_\eps}(t)\|_{\BBB(H^2_v(m))}\leq C\,e^{-t},\quad t\geq0,
$$
or simply, to show that for any $h\in H^2_v(m)$, it holds
\begin{equation}\label{eq:H2vbarNorm}
 \langle\BB_\eps h,h\rangle_{\bar H^2_v(m)} \leq -\|h\|_{\bar H^2_v(m)}^2,
\end{equation}
for some norm $\|\cdot\|_{\bar H^2_v(m)}$ equivalent to the usual norm $\|\cdot\|_{H^2_v(m)}$.

Let us recall that the operator $\BB_\eps$ writes
$$
 \BB_\eps \,=\, \LLL_\eps-\AA \,=\, (Q_\eps[\JJJ_{G_\eps}]-N\chi_R)h+\eps\,\JJJ(h)\,\partial_v G_\eps,
$$
and since $\JJJ_{G_\eps}\in\R$ is a real constant, we can use all a priori estimates on $Q_\eps$ directly. As usual, when no confusion is possible, we drop the dependence on $\JJJ_\eps$. Three steps complete the proof:\\

\noindent{\sl Step 1. Dissipativity in $L^2(m).$}  Let us notice that for any $h\in L^2(m)$ we have
\begin{equation}\nonumber
 |\JJJ(h)|\leq C\|h\|_{L^2(m)},
\end{equation}
for some constant $C>0$. It follows that
\begin{equation}\nonumber
 \JJJ(h)\int_{\R^2}(\partial_vG_\eps)\,h\,m^2\leq|\JJJ(h)|\|\partial_vG_\eps\|_{L^2(m)}\|h\|_{L^2(m)}\leq C\,\|\partial_vG_\eps\|_{L^2(m)}\int_{\R^2} h^2m^2.
\end{equation}

Thus, coming back to~\eqref{eq:preOne}, we find that for $N$ and $R$ large enough one can assume $k_1=-1$, getting
\begin{eqnarray}\label{eq:dissipativeL2}
 \langle\BB_\eps h,h\rangle_{L^2(m)}&\leq& -\|h\|^2_{L^2(m)}-k_2\|h\|^2_{L^2(M^{1/2}m)}-\|\partial_vh\|^2_{L^2(m)},
\end{eqnarray}
 as a consequence, $(\BB_\eps+1)$ is dissipative in $L^2(m)$. 
 \bigskip

\noindent{\sl Step 2. Bounds on the derivatives of $\BB_\eps$.}
 For the $x$-derivative we see that there exists some constant $C'$ depending on $\chi_R$ and its derivatives, such that
 $$
-N\langle\partial_x(\chi_Rh),\partial_xh\rangle_{L^2(m)}\leq C'\| h\|_{L^2(m)}^2- N\|(\partial_xh)\sqrt{\chi_R}\|_{L^2(m)}^2.
$$
On the other hand, thanks to Young's inequality, we get
\begin{eqnarray*}
 \JJJ(h)\int_{\R^2}(\partial_{xv}^2G_\eps)(\partial_xh)\,m^2&=&-\JJJ(h)\int_{\R^2}\partial_xG_\eps\big[\partial^2_{vx}h+2\kappa v\,\partial_xh\big]\,m^2
 \\
 &\leq& \JJJ(h)^2\,\|\partial_vG_\eps\|_{L^2(m)}^2+\frac12\|\partial^2_{xv}h\|_{L^2(m)}^2+\|\sqrt{2}\kappa v\,\partial_xh\|_{L^2(m)}^2.
 \end{eqnarray*}
These two inequalities, together with~\eqref{eq:two}, imply that  for $N$ and $R$ large enough
 \begin{equation}\nonumber
  \langle\partial_x(\BB_\eps h),\partial_xh\rangle_{L^2(m)}\leq-\|\partial_xh\|^2_{L^2(m)}-\frac12\|\partial^2_{xv}h\|^2_{L^2(m)}+C'\|h\|^2_{L^2(m)}+\int_{\R^2}|\partial_xh|\,|\partial_vh|m^2.
 \end{equation}
 \smallskip

 Proceeding similarly with the $v$-derivative we get
\begin{eqnarray*}
 \JJJ(h)\int_{\R^2}(\partial^2_{vv}G_\eps)(\partial_vh)\,m^2&=&|\JJJ(h)|\|\partial^2_{vv}G_\eps\|_{L^2(m)}\|\partial_vh\|_{L^2(m)}
 \\
 &\leq&\frac{1}2\,\|\partial^2_{vv}G_\eps\|_{L^2(m)}(C^2\|h\|^2_{L^2(m)}+\|\partial_vh\|_{L^2(m)}^2),
 \end{eqnarray*}
then, coming back to~\eqref{eq:three}, we find $N,R>0$ such that
\begin{equation}\nonumber
  \langle\partial_v(\BB_\eps h),\partial_vh\rangle_{L^2(m)}\leq-\|\partial_vh\|^2_{L^2(m)}-\|\partial^2_{vv}h\|^2_{L^2(m)}+C'\,\|h\|^2_{L^2(m)}+\int_{\R^2}|\partial_xh|\,|\partial_vh|\,m^2.
 \end{equation}
 \smallskip
 
 Finally, for the second $v$-derivative we find $C'$ such that
 \begin{eqnarray*}
-N\langle\partial^2_{vv}(\chi_Rh),\partial^2_{vv}h\rangle_{L^2(m)}&\leq&-N\int_{\R^2}\chi_R(\partial^2_{vv}h)^2m^2+C'\,\int_{\R^2} (\partial_vh)^2m^2+C'\,\int_{\R^2} h\,|\partial_{vv}^2h|\,m^2,
\end{eqnarray*}
and for any $\epsilon>0$
\begin{eqnarray*}
\JJJ(h)\int_{\R^2}(\partial^3_{vvv}G_\eps)(\partial^2_{vv}h)\,m^2&\leq&\frac{\JJJ(h)^2}{2\epsilon}+\epsilon\,\big(\|\partial^2_{vv}G_\eps\|_{L^2(m)}^2\|\partial^3_{vvv}h\|_{L^2(m)}^2+
\\ 
&&\qquad+\|\partial^2_{vv}G_\eps\|_{L^2(m)}^2\|2\kappa v\,(\partial^2_{vv}h)\|_{L^2(m)}^2\big).
\end{eqnarray*}
 If $\epsilon>0$ is small and $N,R$ large enough, we obtain as an application of~\eqref{extra}, that there is a constant $C'>0$ such that
  \begin{equation}\nonumber
  \langle\partial^2_{vv}(\BB_\eps h),\partial^2_{vv}h\rangle_{L^2(m)}
  \,\leq\,-\|\partial^2_{vv}h\|^2_{L^2(m)}+C'\big[\|h\|^2_{L^2(m)}+\|\partial_vh\|^2_{L^2(m)}+\|\partial^2_{xv}h\|^2_{L^2(m)}+\|\partial^2_{vv}h\|^2_{L^2(m)}\big].
 \end{equation}
 \bigskip
 
 \noindent{\sl Step 3. Equivalent norm and conclusion.} Let $\delta>0$ and $h_1,h_2\in H^2_v(m)$, we can define the bilinear product
 \begin{equation}\nonumber
  \langle h_1,h_2\rangle_{\bar H^2_v(m)}:=\langle h_1,h_2\rangle_{L^2(m)}+\delta\langle\partial_x h_1,\partial_xh_2\rangle_{L^2(m)}+\delta\langle\partial_v h_1,\partial_vh_2\rangle_{L^2(m)}+\delta^2\langle\partial^2_{vv} h_1,\partial^2_{vv}h_2\rangle_{L^2(m)}.
 \end{equation}
and the relative norm 
$$
 \|h\|_{\bar H^2_v(m)}^2:=\|h\|_{L^2(m)}^2+\delta\,\|D_{x,v} h\|_{L^2(m)}^2+\delta^2\,\|\partial^2_{vv}h\|_{L^2(m)}^2.
$$
Choosing $\delta>0$ small enough we conclude that for any $\alpha\in(0,1]$ one find $\delta_\alpha$ such that
$$
 \langle \BB_\eps h,h\rangle_{\bar H^2_v(m)} \leq -\alpha\,\|h\|^2_{\bar H^2_v(m)}.
$$
Since the norm related to $\bar H^2_v(m)$ is equivalent to the usual norm in $H^2_v(m)$, we can conclude that $(\BB_\eps+1)$ is hypodissipative in $H^2_v(m)$.
\end{proof}\bigskip


\begin{lem}\label{lemma:RegSB2}
 There are positive constants $N,R$ large enough and some $C_{\BB_\eps}>0$, such that the semigroup $S_{\BB_\eps}$ satisfies
 $$
 \|S_{\BB_\eps}(t) h\|_{H^2_v(m_1)}\leq C_{\BB_\eps} t^{-9/2}\| h\|_{L^2(m_2)},\quad \forall\,t\in(0,1].
 $$ 
As a consequence, for any $\alpha>-1$, and any exponential weight $m$, there exists $n\geq1$ and $C_{n,\eps}$ such that of any $t>0$ it holds
\begin{equation}\label{eq:consequence}
 \|(\AA S_{\BB_\eps})^{(\ast n)}(t)h\|_{H^2_v(m)}\leq C_{n,\eps}\, e^{\alpha t}\|h\|_{L^2(m)}.
\end{equation}
\end{lem}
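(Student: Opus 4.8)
\emph{Overview.} The plan is to establish the two assertions in order, the smoothing bound being the substantial one and the consequence~\eqref{eq:consequence} then following by a soft iteration argument. The smoothing bound is a hypoelliptic regularisation estimate, which I would prove by the time-weighted (``twisted'') energy method used in~\cite{MMcmp,GMM,MM*} (and akin to the Nash--Villani computations of Section~\ref{sec:estim}).

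\emph{The smoothing estimate.} I would fix $0<\kappa_1<\kappa_2$ with associated weights $m_1<m_2$, write $f_t:=S_{\BB_\eps}(t)h$, and introduce, for $t\in(0,1]$, a functional of the form
$$
\Phi(t):=\|f_t\|_{L^2(m_2)}^2+c_1\,t^{\,\theta_v}\|\partial_v f_t\|_{L^2(m_1)}^2+c_\times\,t^{\,\theta_\times}\langle\partial_x f_t,\partial_v f_t\rangle_{L^2(m_1)}+c_x\,t^{\,\theta_x}\|\partial_x f_t\|_{L^2(m_1)}^2+c_{vv}\,t^{\,\theta_{vv}}\|\partial_{vv}^2 f_t\|_{L^2(m_1)}^2,
$$
with small positive constants $c_\bullet$ and monomial time-weights $\theta_\bullet$ dictated by the anisotropic scaling of the equation; here the cross term, carrying the intermediate weight $\theta_\times=\frac12(\theta_v+\theta_x)$, is indispensable because $\partial_v A=-b\neq0$ is exactly what converts it, along the flow, into a coercive $-b\,\|\partial_x f_t\|_{L^2(m_1)}^2$ contribution. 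For $c_\bullet$ small in the right order, $\Phi(t)^{1/2}$ is at each fixed $t$ a norm equivalent to $\big(\|\cdot\|_{L^2(m_2)}^2+\|\cdot\|_{H^2_v(m_1)}^2\big)^{1/2}$. I would then differentiate $\Phi$ along the flow and feed in the dissipativity estimates for $\BB_\eps$ — the $\BB_\eps$-analogues of~\eqref{eq:one},~\eqref{eq:two},~\eqref{eq:three},~\eqref{extra} and~\eqref{eq:dissipativeL2} established in the proof of Lemma~\ref{lemma:Bdiss}, together with the elementary bound $|\JJJ(f_t)|\le C\|f_t\|_{L^2(m_1)}$ that tames the rank-one term $\eps\,\JJJ(f_t)\,\partial_v G_\eps$, and the fact that $\AA=N\chi_R$ is a bounded, dissipative term — and check that every positive contribution is absorbed: the terms with polynomial $M$-growth by the super-linear dissipation $-k_2\|\cdot\|_{L^2(M^{1/2}m_2)}^2$ at the price of the weight drop $m_2\to m_1$; the mixed derivative $\|\partial_{xv}^2 f_t\|^2$ appearing in~\eqref{extra} by the dissipation $-\|\partial_{xv}^2 f_t\|_{L^2(m_2)}^2$ in~\eqref{eq:two}; and the terms $\theta_\bullet\,t^{\theta_\bullet-1}\|\cdot\|^2$ coming from the time-weights by the matching coercive quantities — this last requirement being what forces the ordering of the $\theta_\bullet$. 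This gives $\Phi'(t)\le0$ on $(0,1]$, hence (after first justifying the finiteness of the derivative norms at small times on a dense set of smooth data and passing to the limit) $\Phi(t)\le\Phi(0^+)=\|h\|_{L^2(m_2)}^2$; reading off the top-order term yields $\|\partial_{vv}^2 f_t\|_{L^2(m_1)}^2\le C\,t^{-\theta_{vv}}\|h\|_{L^2(m_2)}^2$, and the exponent bookkeeping (which I would not try to optimise) produces $\theta_{vv}=9$, i.e.\ the claimed $\|S_{\BB_\eps}(t)h\|_{H^2_v(m_1)}\le C_{\BB_\eps}\,t^{-9/2}\|h\|_{L^2(m_2)}$ for $t\in(0,1]$.

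\emph{The consequence.} Setting $\Theta:=9/2$ and $T(t):=(\AA S_{\BB_\eps})(t)$, I would first note that $\AA$, being a multiplication by a compactly supported function, maps $L^2(m')$ into $L^2(m)$ and $H^2_v(m')$ into $H^2_v(m)$ boundedly for \emph{any} exponential weights $m,m'$. Combined with the smoothing estimate (applied with $m_2=m$, $m_1=m'$ for some fixed $m'<m$), with $\|S_{\BB_\eps}(t)\|_{\BBB(L^2(m))}\le Ce^{-t}$ (from the $L^2(m)$-dissipativity~\eqref{eq:dissipativeL2}) and $\|S_{\BB_\eps}(t)\|_{\BBB(H^2_v(m))}\le Ce^{-t}$ (Lemma~\ref{lemma:Bdiss}), and handling $t>1$ by the semigroup property, this gives $\|T(t)\|_{\BBB(L^2(m))}+\|T(t)\|_{\BBB(H^2_v(m))}\le Ce^{-t}$ and $\|T(t)\|_{\BBB(L^2(m),H^2_v(m))}\le C\,\min(1,t)^{-\Theta}e^{-t}$. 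Then, exactly as in the proof of Proposition~\ref{prop:semigroupGeneral} (split $\int_0^t=\int_0^{t/2}+\int_{t/2}^t$, and use that $T^{(\ast k)}$ vanishes to order $k-1$ at $t=0$ in $\BBB(L^2(m))$), an induction gives $\|T^{(\ast k)}(t)\|_{\BBB(L^2(m),H^2_v(m))}\le C_k\,\min(1,t)^{-(\Theta-k+1)}e^{-t}$; picking $n\ge\Theta+1$ (e.g.\ $n=6$) kills the singularity and yields $\|T^{(\ast n)}(t)\|_{\BBB(L^2(m),H^2_v(m))}\le C_{n,\eps}(1+t^{\,n-1})e^{-t}\le C_{n,\eps,\alpha}e^{\alpha t}$ for every $\alpha>-1$, which is~\eqref{eq:consequence}.

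\emph{Main obstacle.} The hard part will be the first step: fixing the exponents $\theta_\bullet$ and small constants $c_\bullet$ so that everything closes, and actually carrying out the (lengthy but routine) energy estimates for $\BB_\eps$ — in particular juggling at once the two-weight device trading $M$-growth for the loss $m_2\to m_1$, the cross term manufacturing $\partial_x$-coercivity, and the rank-one term $\eps\,\JJJ(\cdot)\,\partial_v G_\eps$, for which one needs slightly more regularity of $G_\eps$ (say $\partial_{vvv}^3 G_\eps\in L^2(m)$, obtained from the elliptic bootstrap behind Theorem~\ref{th:StatExist}). Once the smoothing estimate is secured, the step to~\eqref{eq:consequence} is routine.
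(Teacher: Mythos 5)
Your proposal is correct and follows essentially the same route as the paper: the paper also proves the smoothing bound by the H\"ormander--H\'erau method, introducing the time-weighted functional $\FF(t,h_t)=\|h_t\|_{L^2(m_2)}^2+c_1t^3\|\partial_xh_t\|_{L^2(m_1)}^2+c_2t\|\partial_vh_t\|_{L^2(m_1)}^2+c_3t^2\langle\partial_xh_t,\partial_vh_t\rangle_{L^2(m_1)}+c_4t^4\|\partial^2_{vv}h_t\|_{L^2(m_1)}^2$, showing it is nonincreasing on $(0,1]$ via refined dissipativity estimates for $\BB_\eps$ (two-weight absorption of the $M$-growth, cross term producing the $-b\|\partial_xh\|^2$ coercivity, rank-one term $\eps\,\JJJ(\cdot)\,\partial_vG_\eps$ controlled through bounds on $G_\eps$), and reading off the claimed $t^{-9/2}$ rate. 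For the consequence it likewise observes that the compact support of $\AA$ removes the weight mismatch, giving $\|\AA S_{\BB_\eps}(t)h\|_{H^2_v(m)}\le C t^{-9/2}e^{-t}\|h\|_{L^2(m)}$, and then applies Proposition~\ref{prop:semigroupGeneral} with $X=L^2(m)$, $Y=H^2_v(m)$, $\Theta=9/2$, $\alpha^*=-1$ — exactly the iteration you carry out by hand; the only differences are cosmetic (your exponent bookkeeping and your re-derivation of the convolution induction instead of citing the proposition).
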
\smallskip

\begin{proof} We split the proof in three steps, in the first one we refine the previous estimates on the norm of the semigroup associated to the operator $\BB_\eps$, in the second one we use Hormander-H\'erau technique (see e.g.~\cite{MR2294477}) to get the first inequality, and finally we prove~\eqref{eq:consequence}.\\

\noindent{\sl Step 1. Sharper estimates on $\BB_\eps$.}
We denote for $K>0$ a generic constant. From the proof of the previous Lemma, we know that there are $N,R$ large enough such that for any $h\in D(\BB_\eps)$, it holds
\begin{eqnarray*}
\langle \BB_\eps h,h\rangle_{L^2(m_2)} &\le& -K\|h\|^2_{L^2(m_2)}-\|\partial_vh\|^2_{L^2(m_2)}
\\
\langle \partial_x\BB_\eps h,\partial_xh\rangle_{L^2(m_1)} &\le& -\frac12\|\partial_xh\|^2_{L^2(m_1)}-\frac12\|\partial^2_{xv}h\|^2_{L^2(m_1)}+K\|h\|^2_{L^2(m_1)}+\frac1{2\delta t}\|\partial_vh\|^2_{L^2(m_1)}
\\
\langle \partial_v\BB_\eps h,\partial_vh\rangle_{L^2(m_1)} &\le& -\|\partial^2_{vv}h\|^2_{L^2(m_1)}+K\|h\|^2_{L^2(m_1)}+\frac1{\delta t}\|\partial_vh\|^2_{L^2(m_1)}+\delta t\|\partial_xh\|^2_{L^2(m_1)}
\\
\langle\partial^2_{vv}\BB_\eps h,\partial^2_{vv}h\rangle_{L^2(m_1)} &\le& K\|h\|^2_{L^2(m_1)}+K\|\partial_vh\|^2_{L^2(m_1)}+\frac1{2t\delta}\|\partial^2_{xv}h\|^2_{L^2(m_1)}.
\end{eqnarray*}
We also notice for any $\delta,t\in(0,1)$ it holds
\begin{multline}\nonumber
\langle\partial_x(Q_\eps-\AA) h, \partial_v h \rangle_{L^2(m_1)}+\langle\partial_v (Q_\eps-\AA) h,\partial_xh \rangle_{L^2(m_1)}
\leq
-\frac{b}2\|\partial_xh\|^2_{L^2(m_1)}\\+K\|h\|^2_{L^2(m_2)}+\frac{K}{t\delta}\|\partial_vh\|_{L^2(m_2)}^2
+\frac{K}{t\delta^{1/10}}\|\partial_{vv}^2h\|_{L^2(m_1)}^2+Kt\delta^{1/10}\|\partial^2_{xv}h\|_{L^2(m_1)}^2,
\end{multline}
\begin{multline}\nonumber
\langle\JJJ(h)\partial^2_{xv}G_\eps,\partial_v h\rangle_{L^2(m_1)}+\langle\JJJ(h)\partial^2_{vv}G_\eps,\partial_x h\rangle_{L^2(m_1)}
\leq 
\frac{\|\partial_xG_\eps\|^2_{L^2(m_1)}}{2}\big[2\JJJ(h)^2
\\
+\|\partial^2_{vv}h\|^2_{L^2(m_1)}+\|\partial_{v}h\|^2_{L^2(m_2)}\big]
+\frac{\|\partial^2_{vv}G_\eps\|_{L^2(m_1)}}2\Big[\frac{\JJJ(h)^2}{t\delta}+t\delta\|\partial_{x}h\|^2_{L^2(m_1)}\Big],
\end{multline}
yielding to
\begin{eqnarray*}
\langle\partial_x\BB_\eps h, \partial_v h \rangle_{L^2(m)}+\langle\partial_v \BB_\eps h,\partial_xh \rangle_{L^2(m)}
&\leq&
-\frac{b}4\|\partial_x h\|^2_{L^2(m_1)}+\frac K{t\delta}\|h\|^2_{L^2(m_2)}+\frac K{t\delta}\|\partial_vh\|^2_{L^2(m_2)}
\\
&&\quad+\frac{K}{t\delta^{1/10}}\|\partial_{vv}^2h\|_{L^2(m_1)}^2+Kt\delta^{1/10}\|\partial^2_{xv}h\|_{L^2(m_1)}^2.
\end{eqnarray*}

\noindent{\sl Step 2. Hormander-H\'erau technique.}
For a given $h\in H^2_v(m_1)\cap L^2(m_2)$ we denote $h_t:=S_{\BB_\eps}(t) h$, and define $\FF$ by
$$
\FF(h,t) :=  \| h \|_{L^2(m_2)}^2 + c_1t^3 \| \partial_x h \|_{L^2(m_1)}^2 +  c_2t  \| \partial_v h \|_{L^2(m_1)}^2
+  c_3t^2 \langle\partial_x h,\partial_v h\rangle_{L^2(m_1)} +c_4 t^4\|\partial^2_{vv}h\|^2_{L^2(m_1)},
$$
which, for well chosen parameters, is decreasing. Indeed, thanks to the inequalities found in the first step, we have
\begin{eqnarray*}
 \frac{d}{dt}\FF(t,h_t) &\leq&
\sum_{i=1}^5 \TT_i,
 \end{eqnarray*}
 with
 \begin{eqnarray*}
 \TT_1 &=& K\int_{\R^2}\big[-2m_2^2+2(c_1t^3+c_2 t+c_4t^4)m_1^2+\frac{c_3 t}{\delta}m_2^2\,\big]h_t^2, \\
 \TT_2 &=& \int_{R^2}\big[(3c_1+2c_2\delta-\frac b4c_3+2c_3\delta)t^2-c_1t^3\big](\partial_x h_t)^2m_1^2, \\
 \TT_3 &=& \int_{\R^2}\big[-2m_2^2+c_2m_1^2+\frac{2c_2}\delta m_1^2+\frac{2c_3}{\delta}m_1^2+\frac{c_1t^2}{\delta} m_1^2+2c_4t^4Km_1^2+\frac{c_3tK}\delta m_2^2\big](\partial_v h_t)^2, \\
 \TT_4 &=&  \int_{\R^2}t^3\big[-c_1+\frac{c_4}{\delta}+c_3K\delta^{1/10}\big](\partial^2_{xv} h_t)^2m_1^2, \\
 \TT_5 &=& \int_{\R^2}\big[-2c_2t+\frac{c_3tK}{\delta^{1/10}}+4c_4t^3\big](\partial^2_{vv} h_t)^2m_1^2.
\end{eqnarray*}
 Choosing
 \begin{equation}\nonumber
  c_1=\delta^2,\quad c_2=\delta^{4/3}\quad c_3=\delta^{3/2}\quad\text{and}\quad c_4=\delta^4,
 \end{equation}
we get that for $\delta\in(0,1]$ small enough, it holds
 \begin{eqnarray*}
 \frac{d}{dt}\FF(t,h_t) &\leq& 0.
 \end{eqnarray*}
  for any $t\in(0,1]$. Since $0<c_4\leq c_1\leq c_3\leq c_2$ and $c_1c_2\geq c_3^2$, we finally get that
 \begin{equation}\nonumber
  c_4\, t^{9/2}\Big(\|\partial_{x,v}h_t\|^2_{L^2(m_1)}+\|\partial^2_{vv} h_t\|^2_{L^2(m_1)}\Big)\leq\FF(t,h_t)\leq F(0,h_0)=\|h_0\|^2_{L^2(m_2)}.
 \end{equation}

\noindent{\sl Step 3. Proof of inequality~\eqref{eq:consequence}.} From the definition of $\AA$ we notice that
 $$
 \|\AA\,S_{\BB_\eps}(t) h\|_{H^2_v(m)}\leq C' t^{-9/2}e^{-t}\| h\|_{L^2(m)},\quad \forall\,t\in(0,1],
 $$ 
for some constant $C'$. It is important to remark that since $\AA$ lies in a compact, we do not need anymore two different weights $m_1$ and $m_2$. Therefore, we apply Proposition~\ref{prop:semigroupGeneral} with $X=L^2(m)$, $Y=H^2_v(m)$, $\Theta=9/2$ and $\alpha^*=-1$ to get~\eqref{eq:consequence}.
\end{proof}

\subsection{Spectral analysis on the linear operator in the disconnected case}
\label{subsec:Scher}

We consider in this section the disconnected case $\eps = 0$. 
The corresponding FhN kinetic equation is linear and writes
\bean
&&\partial_t g \,=\, \partial_x(A g) + \partial_v(B_0 \,g) + \partial_{vv}^2 g
\\
&& B_0 = v\,(v-\lambda)\,(v-1) + x,
\eean
Theorem~\ref{th:StatExist} states that there exists at least one function $G_0 \in \Pp \cap H^2_v(m)$ which is a solution to the associated (linear) stationary problem 
$$
\LLL_0  G_0=  \partial_x ( A  G_0 ) + \partial_v ( B_0 G_0 ) +  \partial^2_{vv} G_0 = 0.
$$
Since the operator now enjoys a \emph{positive structure} (it generates  a positive semigroup $S_{\LLL_0}$), we can perform a more accurate analysis. Indeed, we can apply the the abstract Krein-Rutman theorem~\ref{th:KRG} previously stated.
%
%
%

\begin{proof}[Proof of the stability around $\eps=0$ in Theorem~\ref{th:StatExist}]
Let us assume for a first moment that hypotheses of the abstract Theorem~\ref{th:KRG} hold for $\LLL_0$ with $\alpha^*=-1$. We easily remark that
$$
 \lambda=0,\quad f_\infty=G_0\quad \phi=1,
$$
therefore, there exists $\bar\alpha\in (-1,0)$ such that
$$
\Sigma(\LLL_0) \cap \Delta_{\bar\alpha} = \{ 0 \},
$$
and
$$
\forall \, f_0 \in L^2(m), \,\, \forall \, t \ge 0 \qquad
\| S_{\LLL_0}(t) f_0 - \langle f_0 \rangle G_0 \|_{L^2(m)} \le 
C \, e^{\bar\alpha t} \|f_0 - \langle f_0 \rangle G_0 \|_{L^2(m)}.
$$

Now, for $\eps>0$, we consider $G_\eps$ such that
$$
 Q_\eps[\JJJ_{G_\eps}]\,G_\eps=0,
$$
then, it holds
 \begin{equation}\nonumber
  \frac{\partial}{\partial t}(G_\eps-G_0)+\LLL_0(G_\eps-G_0)=h,\qquad h=\eps \, \partial_v((v-\JJJ(G_\eps))G_\eps),
 \end{equation}
and, thanks to Duhamel's formula, we get that
$$
\|G_\eps-G_0\|_{L^2(m)}\leq \|S_{\LLL_0}(t)(G_\eps-G_0)\|_{L^2(m)}+\int_0^t \|S_{\LLL_0}(t-s) h\|_{L^2(m)}\,ds.
$$
But $G_\eps-G_0$ and $h$ have zero mean, then
$$
\|G_\eps-G_0\|_{L^2(m)}\leq C\|G_\eps-G_0\|_{L^2(m)}e^{\bar\alpha t}+\eps\, \frac{C}{|\bar\alpha|}\|G_\eps\|_{H^1_v(M^{1/2}m)}(1-e^{\bar\alpha t}).
$$
Letting $t\rightarrow\infty$ we conclude that there exists $C_{\bar\alpha}>0$ such that
$$
\|G_\eps-G_0\|_{L^2(m)}\leq \eps\, C_{\bar\alpha}\|G_\eps\|_{H^1_v(M^{1/2}m)}.
$$

Finally, thanks to Corollary~\ref{coro:H1m}, we have
$$
  0\,\,=\,\,\langle Q_\eps [\JJJ_{G_\eps}]G_\eps,G_\eps\rangle_{\HH^1}\,\,\leq\,\, K_1-K_2\|G_\eps\|_{\HH^1}^2\,\,\leq\,\,K_1-c_\delta\,K_2\|G_\eps\|_{H^1(m_2)}^2,
$$
for any exponential weight $m_2$. If $\kappa_2>\kappa$, we have then
$$
\|G_\eps\|_{H^1_v(M^{1/2}m)}^2\,\,\leq\,\, C_{\kappa,\kappa_2}\|G_\eps\|_{H^1(m_2)}^2\,\,\leq\,\, C_{\kappa,\kappa_2}K_1/c_\delta K_2,
$$
and in the small connectivity regime $\eps\in(0,\eps_0)$, constants $K_{1}$ and $K_2$ do not depend on $\eps$. Defining $\eta(\eps)= \eps\, C_{\bar\alpha} C_{\kappa,\kappa_2}K_{1}/c_\delta K_2$ we get the stability part of Theorem~\ref{th:StatExist}.\\

It only remains to verify that the requirement of Theorem~\ref{th:KRG} are fulfilled for $\LLL_0$ in the Banach lattice $X=L^2(m)$. \\
\begin{enumerate}
\item
\begin{itemize}
\item[(a)] the splitting~\eqref{eq:splitAB} has the nice structure. Indeed, the Lemma~\ref{lemma:Bdiss} implies that $\BB_0+1$ is hypodissipative in $L^2(m)$, therefore
$$
 \|S_{\BB_0}(t)\|_{\BBB(L^2(m))}\leq C e^{-t},\quad \forall\,t\geq0,
$$
i.e., it suffices to take $\alpha^*=-1$.
\item[(b)] if $Y=H^2_v(m)$ and $X=L^2(m)$, the desired inequality is consequence of Lemma~\ref{lemma:RegSB2}.
\end{itemize}

\item The requirement is obtained for $\beta=0$ and $\psi=1$. Indeed, in that case
$$
\LLL_0^*\psi \,=\, Q_0^* 1 \,=\, 0\,\geq\, \beta\psi.
$$
\item A side consequence of~\eqref{eq:one} is the positivity of the semigroup: $$f_0\geq0\qquad \Rightarrow\qquad S_{\LLL_0}f_0(t)\geq0,\quad\forall\, t\geq0.$$
Moreover, using that $L^2(m)$ is also a Hilbert space, we deduce the Kato's inequalities.
\item The strict positivity (or strong maximum principle) is a straightforward consequence of Theorem~\ref{th:FPtheo} in Appendix~\ref{app:MaximumPrinciple}.
\end{enumerate}
 \end{proof}

Let us finish this section by summarizing the properties of the spectrum of $\LLL_0$ in the Banach space $L^2(m)$ and by a useful result on the regularisation properties of $\RR_{\LLL_0}(z)$.
 \begin{prop} \label{prop:L0Ext}\ 
  \begin{itemize}
   \item[(i)] There exists $\bar\alpha<0$ such that the spectrum $\Sigma(\LLL_0)$ of $\LLL_0$ in $L^2(m)$ writes 
   $$
   \Sigma(\LLL_0)\cap \Delta_{\bar\alpha}\,=\,\{0\},
   $$
   and $0$ is simple.
   \item[(ii)] For any $\alpha>\bar\alpha$, there exists a constant $C_{H^1_v}>0$ depending on $(\alpha-\bar\alpha)$, such that
   $$
    \|\RR_{\LLL_0}(z)\|_{\BBB(L^2(m),H^1_v(m))}\leq C_{H^1_v}(1+|z|^{-1}),\qquad  \forall\,z\in\C\setminus\{0\},\text{Re}(z)>\alpha.
   $$
  \end{itemize}
 \end{prop}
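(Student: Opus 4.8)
The statement is Proposition~\ref{prop:L0Ext}, which summarizes spectral information already accumulated. Part~(i) is essentially a restatement of what was proved during the verification of Theorem~\ref{th:KRG} for $\LLL_0$: the abstract Krein--Rutman theorem gives $\lambda=s(\LLL_0)=0$ with $f_\infty=G_0$, $\phi=1$, and the spectral gap $\Sigma(\LLL_0)\cap\Delta_{\bar\alpha}=\{0\}$ for some $\bar\alpha\in(-1,0)$, with $0$ a simple isolated eigenvalue. So for~(i) I would simply invoke that conclusion, recalling the splitting $\LLL_0=\AA+\BB_0$ with $\AA=N\chi_R$, Lemma~\ref{lemma:Bdiss} giving hypodissipativity of $\BB_0+1$ in $L^2(m)$ (hence $\Sigma(\BB_0)\subset\Delta_{-1}^c$, i.e. $\re\le-1$), and Lemma~\ref{lemma:RegSB2} giving the required power-time regularization of $(\AA S_{\BB_0})^{(*n)}$; together with the positivity and strong maximum principle (Theorem~\ref{th:FPtheo}) these furnish all hypotheses of Theorem~\ref{th:KRG}, whose conclusion is exactly~(i).

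**Part (ii): the resolvent bound.** For the quantitative resolvent estimate $\|\RR_{\LLL_0}(z)\|_{\BBB(L^2(m),H^1_v(m))}\le C_{H^1_v}(1+|z|^{-1})$ on $\{\re z>\alpha\}\setminus\{0\}$, the plan is to use the factorization of the resolvent coming from $\LLL_0=\AA+\BB_0$. Writing $\RR_{\LLL_0}(z)$ in terms of $\RR_{\BB_0}(z)$ via the standard iterated identity
\begin{equation}\nonumber
\RR_{\LLL_0}(z)=\sum_{\ell=0}^{n-1}(-1)^\ell\,\RR_{\BB_0}(z)\big(\AA\,\RR_{\BB_0}(z)\big)^{\ell}+(-1)^n\,\RR_{\LLL_0}(z)\big(\AA\,\RR_{\BB_0}(z)\big)^{n},
\end{equation}
I would estimate each piece separately. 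Since $\BB_0+1$ is hypodissipative in both $L^2(m)$ and $H^1_v(m)$ (the latter is part of what the proof of Lemma~\ref{lemma:Bdiss} actually shows, the $H^2_v(m)$ estimate containing the $H^1_v(m)$ one), Laplace transforming $S_{\BB_0}$ gives $\|\RR_{\BB_0}(z)\|_{\BBB(\XX)}\le C/(\re z+1)\le C/(\alpha+1)$ for $\XX\in\{L^2(m),H^1_v(m)\}$ when $\re z>\alpha>-1$. The regularizing term: from Lemma~\ref{lemma:RegSB2}, $\|S_{\BB_0}(t)h\|_{H^2_v(m)}\le C t^{-9/2}\|h\|_{L^2(m)}$ for $t\in(0,1]$ and exponential decay for $t\ge1$; since $\AA\in\BBB(H^2_v(m))$ is supported in a ball, $\AA S_{\BB_0}(t)$ maps $L^2(m)$ to $L^2(m)$ (indeed to $H^2_v(m)$) with the same $t^{-9/2}$ singularity, which is integrable at $0$; iterating $n$ times ($n\ge 5$ suffices so that the convolution of the singularities becomes bounded, by the convolution-of-powers argument of Proposition~\ref{prop:semigroupGeneral}) one gets $(\AA S_{\BB_0})^{(*n)}\in L^1((0,\infty);\BBB(L^2(m),H^1_v(m)))$ with exponential weight, so its Laplace transform $\big(\AA\RR_{\BB_0}(z)\big)^n$ (up to the usual combinatorial identity relating the transform of the convolution to the product of resolvents) is bounded on $\{\re z>\alpha\}$ uniformly, in fact from $L^2(m)$ to $H^1_v(m)$. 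Finally $\RR_{\LLL_0}(z)$ itself is bounded on $L^2(m)$ for $\re z>\alpha$, $z\ne0$, by part~(i) together with the fact that near $0$ the only singularity is the rank-one pole $\langle\cdot\rangle G_0/(-z)$, whence the $|z|^{-1}$; away from a neighbourhood of $0$ it is uniformly bounded by the spectral gap and a large-$|z|$ estimate (for $|z|$ large, $\re z>\alpha$, one has $\RR_{\BB_0}(z)\to0$ and the Neumann-type series converges, giving boundedness of $\RR_{\LLL_0}(z)$). Combining: $\|\RR_{\LLL_0}(z)\|_{\BBB(L^2(m),H^1_v(m))}\le \sum_{\ell<n}\|\RR_{\BB_0}(z)\|_{\BBB(H^1_v(m))}\|\AA\|^\ell\|\RR_{\BB_0}(z)\|^\ell_{\BBB(L^2(m))}+\|\RR_{\LLL_0}(z)\|_{\BBB(L^2(m))}\|\big(\AA\RR_{\BB_0}(z)\big)^n\|_{\BBB(L^2(m),H^1_v(m))}$, each summand bounded by $C(1+|z|^{-1})$, which is the claim.

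**Main obstacle.** The only genuinely delicate point is bookkeeping around $z=0$ and large $|z|$ simultaneously: one must separate the rank-one residue at $0$ (producing the $|z|^{-1}$ factor) from the analytic remainder, and one must be careful that the iterated resolvent identity is valid and that the Laplace transform of the $n$-fold convolution genuinely equals $\big(\AA\RR_{\BB_0}(z)\big)^n$ on the relevant half-plane — this is where the hypotheses of Proposition~\ref{prop:semigroupGeneral} (integrability of the time singularity after $n$ iterations) are used. The estimates on $\RR_{\BB_0}(z)$ in $H^1_v(m)$ rely on extracting from the proof of Lemma~\ref{lemma:Bdiss} that $(\BB_0+1)$ is in fact hypodissipative in $H^1_v(m)$ as well — this is implicit there but should be stated explicitly. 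Everything else is a routine assembly of already-proven ingredients.
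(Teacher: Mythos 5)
Your part (i) coincides with the paper's treatment: the proposition's first item is just a restatement of what was established when the hypotheses of the abstract Krein--Rutman theorem (Theorem~\ref{th:KRG}) were verified for $\LLL_0$ with the splitting $\LLL_0=\AA+\BB_0$, and the paper's proof indeed begins with ``it only remains to prove (ii)''. For part (ii), however, you take a genuinely different and much heavier route than the paper. The paper argues directly on the resolvent equation: for $(\LLL_0-z)f=g$ it tests against $f$ in $L^2(m)$ and uses the dissipativity estimate of Lemma~\ref{lemma:Bdiss} (which already contains the $-\|\partial_v f\|_{L^2(m)}^2$ term) together with the boundedness of $\AA$, obtaining $(\re(z)-\bar\alpha)\|f\|^2_{L^2(m)}+\|\partial_v f\|^2_{L^2(m)}\le\|g\|_{L^2(m)}\|f\|_{L^2(m)}+N\|f\|^2_{L^2(m)}$; combined with the bound $\|\RR_{\LLL_0}(z)\|_{\BBB(L^2(m))}\le C^0|z|^{-1}$ coming from the Laurent expansion at the simple isolated eigenvalue $0$, this gives the $H^1_v(m)$ estimate in two lines. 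Your iterated factorization $\RR_{\LLL_0}=\sum_\ell(-1)^\ell\RR_{\BB_0}(\AA\RR_{\BB_0})^\ell+(-1)^n\RR_{\LLL_0}(\AA\RR_{\BB_0})^n$ with the regularization of $(\AA S_{\BB_0})^{(*n)}$ is the standard enlargement machinery and can be made to work, but it is superfluous here: the key ingredient you ultimately need is precisely the $L^2(m)\to H^1_v(m)$ coercivity that the paper exploits directly.

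Two concrete points in your write-up need repair. First, the bookkeeping of the terms $\RR_{\BB_0}(z)(\AA\RR_{\BB_0}(z))^\ell$: hypodissipativity of $\BB_0+1$ in $H^1_v(m)$ does not suffice, because $(\AA\RR_{\BB_0}(z))^\ell g$ lies only in $L^2(m)$ (multiplication by $N\chi_R$ does not produce $H^1_v$ regularity), so the factor you must control is $\|\RR_{\BB_0}(z)\|_{\BBB(L^2(m),H^1_v(m))}$, not $\|\RR_{\BB_0}(z)\|_{\BBB(H^1_v(m))}$; this bound does hold, by the same energy estimate \eqref{eq:dissipativeL2} applied to $(\BB_0-z)f=g$ --- but once you invoke it you may as well apply it to $\LLL_0-z$ itself, which is the paper's proof. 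Second, your justification of the uniform bound on $\|\RR_{\LLL_0}(z)\|_{\BBB(L^2(m))}$ away from $z=0$ via ``$\RR_{\BB_0}(z)\to0$ for large $|z|$ and a Neumann series'' does not work with the estimates available: hypodissipativity only yields $\|\RR_{\BB_0}(z)\|\le C/(\re z+1)$, which does not decay as $|\mathrm{Im}\,z|\to\infty$ at fixed real part, so the Neumann series need not converge near the line $\re z=\alpha$. The correct (and available) argument is to Laplace-transform the semigroup decay furnished by Theorem~\ref{th:KRG}, i.e. write $\RR_{\LLL_0}(z)=-z^{-1}\Pi_0+\int_0^\infty e^{-zt}S_{\LLL_0}(t)(I-\Pi_0)\,dt$ for $\re z>\bar\alpha$, which gives at once the bound $C(1+|z|^{-1})$ on all of $\Delta_\alpha\setminus\{0\}$ and subsumes the Laurent-series step.
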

\begin{proof}
It only remains to prove (ii). Let us consider $z\in \Delta_\alpha\setminus\{0\}$, and take $f,g\in L^2(m)$ such that
$$
 (\LLL_0-z)f\,\,=\,\, g.
$$
Thanks to Lemma~\ref{lemma:Bdiss} and the definition of $\AA$, we get
$$
 (\text{Re}(z)-\bar\alpha)\|f\|_{L^2(m)}^2+\|\partial_v f\|^2_{L^2(m)} \,\,\leq\,\, \| g\|_{L^2(m)}\,\|f\|_{L^2(m)} + N\,\|f\|^2_{L^2(m)}.
$$
Moreover, (i) tells us that $0$ is an isolated simple eigenvalue for $\LLL_0$ in $L^2(m)$, then $\RR_{\LLL_0}(z)$ writes as the Laurent series (see for example~\cite[Section 3.5]{Kato})
 $$
 \RR_{\LLL_0}(z) = \sum_{k=-1}^\infty z^{k}\CC_k,\qquad  \CC_k\in\BBB(L^2(m)),
$$
which on a small disc around 0 converges. Thus, there is some $C^0>0$ such that $\|\RR_{\LLL_0}(z)\|_{\BBB(L^2(m))}\leq C^0\,|z|^{-1}$ for any $z\in\Delta_{\alpha}$, $z\neq0$. Finally, we notice that
$$
 \min(1,\alpha-\bar\alpha)\|f\|_{H^1_v(m)} \,\,\leq\,\,(1+ NC^0|z|^{-1})\,\|g\|_{L^2(m^2)},
$$
therefore, it suffices to take $C_{H^1_v}=1+/ NC^0 \min(1,\alpha-\bar\alpha)$, with $N$ large enough.
\end{proof}


\section{Stability of the stationary solution in the small connectivity regime} 
\label{sec:StabilityStatSol}
\setcounter{equation}{0}
\setcounter{theo}{0}

Now, we establish the exponential convergence of the nonlinear equation. To that aim, we first notice that, in the small connectivity regime, the linear operator $\LLL_\eps$ inherits (in a sense that we precise later on) the stability properties of $\LLL_0$.
\subsection{Uniqueness of the stationary solution in the weak connectivity regime}

As a first step in the proof of Theorem~\ref{th:WeakCregime}, we need a uniqueness condition that, for instance, can be settled as a consequence of the following estimate:

%
%
 \begin{lem}\label{lem:EstimL0perp} There exists a constant $C_\VV$ such that for any $g \in L^2(m), \,\, \langle g \rangle = 0 $ and for the solution $f \in L^2(m)$ to the
 linear equation $\LLL_0 f = g$ there holds
 \beqn\label{eq:EstimL0perp} 
 \|f \|_{\VV} := \| f \|_{L^2(Mm)} + \| \nabla_v f \|_{L^2(M^{1/2}m)} \le C_\VV \, \| g \|_{L^2(m)}. 
 \eeqn
 \end{lem}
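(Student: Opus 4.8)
The plan is to combine the invertibility of $\LLL_0$ on the zero–mean subspace, which is granted by the spectral picture of Proposition~\ref{prop:L0Ext}, with a weighted energy estimate for the equation $\LLL_0 f=g$ carried out against the heavier weight $Mm^2$ rather than $m^2$, in the spirit of Step~1 of the proof of~\eqref{eq:BdissipH1m}.

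First I would record the base $L^2(m)$ bound. Since $\langle g\rangle=0$ and $0$ is a simple isolated eigenvalue of $\LLL_0$ in $L^2(m)$ with eigenfunction $G_0$ and dual eigenfunction $1$, the associated spectral projection is $f_0\mapsto\langle f_0\rangle\,G_0$; hence $\LLL_0$ leaves the closed subspace $Z_0:=\{h\in L^2(m):\langle h\rangle=0\}$ invariant, $0\in\rho(\LLL_0|_{Z_0})$, and there is a unique $f\in Z_0$ solving $\LLL_0 f=g$ (any other solution differing from it by a multiple of $G_0$), with
\[
\|f\|_{L^2(m)}\le C_0\,\|g\|_{L^2(m)}.
\]
Equivalently, this follows by integrating in time the exponential decay $\|S_{\LLL_0}(t)h\|_{L^2(m)}\le C e^{\bar\alpha t}\|h\|_{L^2(m)}$ for $h\in Z_0$, established in the proof of Theorem~\ref{th:StatExist}. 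Proposition~\ref{prop:L0Ext}(ii) moreover gives $f\in H^1_v(m)$, the regularity used below.

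Then I would test the identity $\LLL_0 f=Q_0 f=g$ (when $\eps=0$ there is no nonlinear remainder, $Q_0=Q_0[\JJJ_{G_0}]$) against $fMm^2$, i.e. compute $\langle Q_0 f,f\rangle_{L^2(M^{1/2}m)}$. Repeating the integrations by parts of Step~1 of the proof of~\eqref{eq:BdissipH1m} with the weight $m_2^2$ replaced by $\omega^2:=Mm^2$ (so that $\partial_x\omega^2/\omega^2=x(2\kappa+1/M)$ and $\partial_v\omega^2/\omega^2=v(2\kappa+1/M)$) should give
\[
\langle Q_0 f,f\rangle_{L^2(M^{1/2}m)}=-\|\partial_v f\|_{L^2(M^{1/2}m)}^2-\int_{\R^2}\tilde p(x,v)\,f^2\,Mm^2 ,
\]
with $\tilde p$ a polynomial whose leading part is a positive multiple of $v^4+x^2$, so that $\tilde p\ge cM-C'$ on $\R^2$. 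Since the left-hand side equals $\langle g,f\rangle_{L^2(M^{1/2}m)}=\int_{\R^2}gf\,Mm^2\le\|g\|_{L^2(m)}\,\|f\|_{L^2(Mm)}$, two applications of Young's inequality (one to absorb the lower–order term $C'\int_{\R^2} Mf^2m^2$ and one to absorb $\langle g,f\rangle$, both into $c\|f\|^2_{L^2(Mm)}$) yield
\[
\|\partial_v f\|_{L^2(M^{1/2}m)}^2+\tfrac{c}{2}\,\|f\|_{L^2(Mm)}^2\le C\,\|g\|_{L^2(m)}^2+C\,\|f\|_{L^2(\R^2)}^2 ;
\]
bounding $\|f\|_{L^2(\R^2)}\le\|f\|_{L^2(m)}\le C_0\|g\|_{L^2(m)}$ by the first step then gives~\eqref{eq:EstimL0perp}.

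As always when testing a kinetic equation against a growing weight, this last computation must first be carried out on a regularisation of $f$ — e.g. on $f\chi_R$, whose commutator terms are supported in $R\le|(x,v)|\le 2R$, or on the solutions $f_n$ associated to a sequence $g_n\to g$ of smooth compactly supported zero–mean data — so that all weighted integrals are a priori finite, after which one passes to the limit using the first step and lower semicontinuity of the weighted norms. The genuinely substantive point, which I expect to be the main obstacle, is the weighted coercivity: one must check that differentiating the cubic drift $B_0$ against the super–quadratic weight $Mm^2$ still produces a dissipative polynomial dominating $M$, i.e. that the gain $-k_2\|f\|^2_{L^2(M^{1/2}m_2)}$ of~\eqref{eq:one} survives when $m_2^2$ is replaced by $Mm^2$. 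Note finally that the spectral input of the first step cannot be dispensed with here: the energy estimate leaves an unsigned term $C\|f\|^2_{L^2(\R^2)}$ on its right-hand side, and only the invertibility of $\LLL_0$ on $Z_0$ — available precisely because $\langle g\rangle=0$ — allows one to control it.
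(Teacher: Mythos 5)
Your proposal follows essentially the same route as the paper's proof: a weighted coercivity estimate obtained by testing $\LLL_0 f=g$ against $f\,Mm^2$ (producing $-\|\partial_v f\|^2_{L^2(M^{1/2}m)}$ and a coercive polynomial term dominating $\|f\|^2_{L^2(Mm)}$, up to a lower-order $\|f\|^2_{L^2(m)}$ contribution), combined with the invertibility bound $\|f\|_{L^2(m)}\le C_{\bar\alpha}\|g\|_{L^2(m)}$ on zero-mean data and Young's inequality to absorb the source and lower-order terms. The extra care you take in justifying the computation against the growing weight by truncation or approximation is a refinement the paper leaves implicit, but the argument is the same.
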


\begin{proof} We easily compute
\begin{eqnarray}
\nonumber
 \int_{\R^2} (\LLL_0f)  f\,Mm^2 \,=\,-\int_{\R^2}p(x,v)f^2\,m^2-\int_{\R^2}(\partial_vf)^2\,Mm^2,
\end{eqnarray}
 for some $p(x,v)$ polynomial in $x$ and $v$ with leading term $v^6+x^4$. Therefore, there exists some constants $K_1>0$ and $0<K_2<1$, such that
$$
 \int_{\R^2} (\LLL_0 f) f Mm^2 \,\le\, K_1 \int_{\R^2} f^2  m^2  - K_2 \int_{\R^2} f^2  M^2m^2 - K_2  \int_{\R^2} (\partial_v f)^2  Mm^2 .
 $$
 
 The invertibility of $\LLL_0$ in $L^2(m)$ for zero mean functions, writes
 $$
 \LLL_0 f = g \in L^2(m), \,\, \langle g \rangle = 0 
 \quad\Rightarrow\quad 
 \| f \|_{L^2(m)} \le C_{\bar\alpha}\, \| g \|_{L^2(m)},
 $$
with $C_{\bar\alpha}$ given in the proof of the stability part of Theorem~\ref{th:StatExist}. As a consequence, for any $f$ and $g$ as in the statement of the lemma, we have
 \bean
&&  \int_{\R^2} f^2  M^2m^2  + \int_{\R^2} (\partial_v f)^2  Mm^2 
  \,\le\, -\frac{1}{K_2}  \int_{\R^2} g\, f  Mm^2 + \frac{K_1}{K_2}  \int_{\R^2} f^2  m^2
\\
&&\qquad \,\le\,  {1 \over 2} \int_{\R^2} f^2   \, M^2m^2 +  \frac{1}{2K_2^2}  \int_{\R^2} g^2  \, m^2 + \frac{K_1C}{K_2} \int_{\R^2} g^2  \, m^2, 
 \eean
 from which \eqref{eq:EstimL0perp}  immediately follows.
\end{proof}

\begin{cor}\label{prop:WeakCregimeU} There exists $\eps_1\in(0,\eps_0)$ such that in the small connectivity regime $\eps \in (0,\eps_1)$ the stationary solution is unique.
\end{cor}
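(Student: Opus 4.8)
The plan is to run a small‑$\eps$ perturbation argument off the disconnected operator $\LLL_0$, using Lemma~\ref{lem:EstimL0perp} as the coercive estimate that closes the loop. Suppose $G^1_\eps$ and $G^2_\eps$ are two stationary solutions of~\eqref{eq:StatExist}; set $D:=G^1_\eps-G^2_\eps$ and $\JJJ_i:=\JJJ(G^i_\eps)$. First I would record the uniform bounds that will be needed: by Theorem~\ref{th:StatExist} both $G^i_\eps$ belong to $H^2_v(m)\cap\Pp(\R^2)$, by Lemma~\ref{lemma:L1M} one has $|\JJJ_i|\le C_0'$, and — arguing exactly as in the proof of the stability part of Theorem~\ref{th:StatExist}, i.e. testing the stationary identity against $G^i_\eps$ in the $\HH^1$ scalar product and using Corollary~\ref{coro:H1m} together with the fact that the left‑hand side vanishes — the $G^i_\eps$ are bounded in $H^1(m_2)$, uniformly for $\eps\in(0,\eps_0)$, for any $\kappa_2>\kappa$. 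Since $m\le m_2$ and $M^{1/2}m\le C\,m_2$, this gives in particular $\|\partial_v G^i_\eps\|_{L^2(m)}\le C$ and $\|\partial_v G^i_\eps\|_{L^2(M^{1/2}m)}\le C$ uniformly in $\eps$.

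Next I would subtract the two stationary identities $Q_\eps[\JJJ_i]G^i_\eps=0$. Using the decomposition $Q_\eps[\JJJ]f=\LLL_0 f-\eps\,\partial_v\big((v-\JJJ)f\big)$ and the identity $\JJJ_2-\JJJ_1=-\JJJ(D)$, one finds that $D$ solves
\[
\LLL_0 D = g,\qquad g:=\eps\,\partial_v\big((v-\JJJ_1)\,D\big)\;-\;\eps\,\JJJ(D)\,\partial_v G^2_\eps .
\]
Both $D$ and $g$ have zero mean (the former as a difference of probability densities, the latter as a total $v$‑derivative), and $g\in L^2(m)$ since $D\in L^2(m)$, $\partial_v D\in L^2(m_2)\subset L^2(M^{1/2}m)$ and $\partial_v G^2_\eps\in L^2(m)$. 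Because $0$ is a simple isolated eigenvalue of $\LLL_0$ in $L^2(m)$ (Proposition~\ref{prop:L0Ext}(i)), the operator $\LLL_0$ is a bijection of the hyperplane of zero‑mean functions onto itself, so $D$ is exactly the solution furnished by Lemma~\ref{lem:EstimL0perp}; hence $\|D\|_\VV\le C_\VV\,\|g\|_{L^2(m)}$.

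Then I would estimate $\|g\|_{L^2(m)}$ by $C\eps\|D\|_\VV$. Expanding $\partial_v\big((v-\JJJ_1)D\big)=D+(v-\JJJ_1)\,\partial_v D$ and using $M\ge1$ and $|v-\JJJ_1|\le C\,M^{1/2}$ (recall $|\JJJ_1|\le C_0'$), the first piece is bounded by $\|D\|_{L^2(Mm)}+C\,\|\partial_v D\|_{L^2(M^{1/2}m)}\le C\,\|D\|_\VV$; for the second piece, $|\JJJ(D)|=\big|\int_{\R^2} vD\big|\le \|v/m\|_{L^2}\,\|D\|_{L^2(m)}\le C\,\|D\|_\VV$, while $\|\partial_v G^2_\eps\|_{L^2(m)}\le C$ uniformly in $\eps$. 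Combining, $\|g\|_{L^2(m)}\le C\,\eps\,\|D\|_\VV$ for a constant $C$ depending only on the parameters of the system and on $\eps_0$, so that $\|D\|_\VV\le C\,C_\VV\,\eps\,\|D\|_\VV$. Choosing $\eps_1:=\min\big(\eps_0,\,(2\,C\,C_\VV)^{-1}\big)$ then forces $\|D\|_\VV=0$ for every $\eps\in(0,\eps_1)$, i.e. $G^1_\eps=G^2_\eps$, which is the asserted uniqueness.

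The point that must be gotten right — and the reason Lemma~\ref{lem:EstimL0perp} was stated with the particular norm $\|\cdot\|_\VV=\|\cdot\|_{L^2(Mm)}+\|\nabla_v\cdot\|_{L^2(M^{1/2}m)}$ rather than a plain $H^1(m)$ estimate — is that the perturbation $g$ contains the term $(v-\JJJ_1)\,\partial_v D$ with a weight $|v-\JJJ_1|\sim M^{1/2}$, so absorbing it requires precisely the $M^{1/2}$‑weighted control of $\partial_v D$ that appears on the left of~\eqref{eq:EstimL0perp}; the secondary technical care is to make sure the $\eps$‑uniform a priori bounds on stationary solutions (from Corollary~\ref{coro:H1m} and Lemma~\ref{lemma:L1M}) are in hand, so that the constant $C$ above does not blow up as $\eps$ varies in $(0,\eps_0)$.
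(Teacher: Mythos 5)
Your proof is correct and follows essentially the same route as the paper: you subtract the two stationary identities, write the difference as $\LLL_0 D=\eps\,\partial_v\big((v-\JJJ_1)D\big)-\eps\,\JJJ(D)\,\partial_v G^2_\eps$, and close with the $\VV$-norm estimate of Lemma~\ref{lem:EstimL0perp} together with the $\eps$-uniform $H^1(m)$ bounds on stationary solutions, exactly as in Corollary~\ref{prop:WeakCregimeU}. Your write-up merely spells out the zero-mean verification and the weighted bounds on $g$ in more detail than the paper does.
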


\begin{proof}
We write 
\bear
G_\eps - F_\eps 
&=& \eps\, \LLL_0^{-1} \Big[ \partial_v \Big(   ( v- \JJJ(F_\eps)) F_\eps -  ( v- \JJJ(G_\eps) ) G_\eps \Big)  \Big] 
      \nonumber \\
\label{eq:G-F}
&=& \eps\, \LLL_0^{-1} \Big[  \partial_v \Big( ( v- \JJJ(F_\eps)) (F_\eps - G_\eps) +  ( \JJJ(F_\eps) -  \JJJ(G_\eps) ) G_\eps \Big) \Big] .
\eear
As a consequence, using the invertibility property  of $\LLL_0$ for zero mean functions, and the uniform bound~\eqref{eq:H1m} on $G_\eps$, $F_\eps$, we get
\bean
 \| F_\eps - G_\eps \|_{\VV} 
&\le& \eps \, C_{\bar\alpha} \,\big \|  \partial_v \big( ( v- \JJJ(F_\eps)) (F_\eps - G_\eps) +  ( \JJJ(F_\eps) -  \JJJ(G_\eps) ) G_\eps \big) \big \|_{L^2(m)}
\\
&\le&  \eps \, C \, \| F_\eps - G_\eps \|_{\VV},
\eean
for some $C$ depending on the parameters of the system and $\eps_0$. The previous relationship implies, in particular, that $\|F_\eps - G_\eps \|_{\VV} = 0$ for $\eps<\eps_1=1/C$. 
\end{proof}
  
 \subsection{Study of the Spectrum and Semigroup for the Linear Problem}
 
We now turn into a generalisation of Proposition~\ref{prop:L0Ext} in the case $\eps>0$ small. Since the positivity of the operator is lost, Krein-Rutman theory does not apply anymore, however we can prove the following result based on a perturbation argument

\begin{theo}\label{theo:LepsExt} Let us fix $\alpha\in(\bar\alpha,0)$. Then there exists $\eps_2\in(0,\eps_1)$ such that for any $\eps\in[0,\eps_2]$, there hold
\begin{itemize}
 \item[(i)] The spectrum $\Sigma(\LLL_\eps)$ of $\LLL_\eps$ in $L^2(m)$ writes 
 $$
 \Sigma(\LLL_\eps)\cup \Delta_{\alpha}\,=\,\{\mu_\eps\},
 $$
  where $\mu_\eps$ is a eigenvalue simple. Moreover, since $\LLL_\eps$ remains in divergence form, we still have $$\LLL_\eps^*1=0$$ and then $\mu_\eps=0$.
 \item[(ii)] The linear semigroup $S_{\LLL_\eps}(t)$ associated to $\LLL_\eps$ in $L^2(m)$ writes $$S_{\LLL_\eps}(t)=e^{\mu_\eps t}\Pi_\eps+ R_\eps(t),$$ where $\Pi_\eps$ is the projection on the eigenspace associated to $\mu_\eps$ and where $R_\eps(t)$ is a semigroup which satisfies $$\|R_\eps(t)\|_{\BBB(L^2(m))}\leq C_{\LLL_{\eps_1}}\,e^{\alpha t},$$ for some positive constant $C_{\LLL_{\eps_1}}$ independent of $\eps$.
\end{itemize}
\end{theo}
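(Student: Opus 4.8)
The plan is to treat $\LLL_\eps$ as a perturbation of $\LLL_0$ and transfer the spectral picture of Proposition~\ref{prop:L0Ext} using the stability of factorised resolvents developed in~\cite{MS*,GMM}. I would begin by recording the decomposition $\LLL_\eps = \AA + \BB_\eps$ from~\eqref{eq:splitAB} and noting, thanks to Lemma~\ref{lemma:Bdiss}, that $\BB_\eps + 1$ is hypodissipative in $L^2(m)$ uniformly in $\eps\in[0,\eps_0)$, so that $\Sigma(\BB_\eps)\subset \Delta_{-1}^c$ and $\|\RR_{\BB_\eps}(z)\|_{\BBB(L^2(m))}\le C/(\re z +1)$ for $\re z > -1$. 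Then, on the resolvent set, one has the identity
$$
\RR_{\LLL_\eps}(z) = \RR_{\BB_\eps}(z)\big(I - \AA\,\RR_{\BB_\eps}(z)\big)^{-1}
= \sum_{\ell=0}^{n-1}\RR_{\BB_\eps}(z)(\AA\RR_{\BB_\eps}(z))^\ell + \RR_{\BB_\eps}(z)(\AA\RR_{\BB_\eps}(z))^n\RR_{\LLL_\eps}(z),
$$
valid wherever the Neumann-type inverse makes sense; the point, exactly as in the disconnected case, is that by Lemma~\ref{lemma:RegSB2} the iterated operator $(\AA S_{\BB_\eps})^{(*n)}$ maps $L^2(m)$ into $H^2_v(m)$ with exponential-in-time bound for any rate $\alpha>-1$, hence $\AA(\AA\RR_{\BB_\eps}(z))^n$ is compact from $L^2(m)$ to $L^2(m)$ on $\Delta_\alpha$, which confines $\Sigma(\LLL_\eps)\cap\Delta_\alpha$ to a discrete set of eigenvalues of finite algebraic multiplicity.

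Next I would quantify the $\eps$-dependence. Writing $\LLL_\eps - \LLL_0 = (Q_\eps[\JJJ_{G_\eps}] - Q_0) + \eps\,\JJJ(\cdot)\,\partial_v G_\eps$, both pieces are $O(\eps)$ as bounded operators from $H^1_v(m)$ (or $H^1(m)$) into $L^2(m)$: the first because $B_\eps - B_0 = -\eps(v - \JJJ_{G_\eps})$ and because $\|G_\eps\|_{\HH^1}$ is bounded uniformly in $\eps\in(0,\eps_0)$ by Corollary~\ref{coro:H1m}, the second by the elementary bound $|\JJJ(h)|\le C\|h\|_{L^2(m)}$ together with the uniform bound on $\|\partial_v G_\eps\|_{L^2(m)}$. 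Combining this with the gain of regularity $\RR_{\LLL_0}(z)\in\BBB(L^2(m),H^1_v(m))$ from Proposition~\ref{prop:L0Ext}(ii), the second resolvent identity
$$
\RR_{\LLL_\eps}(z) - \RR_{\LLL_0}(z) = -\RR_{\LLL_\eps}(z)\,(\LLL_\eps - \LLL_0)\,\RR_{\LLL_0}(z)
$$
shows that on any compact subset of $\Delta_\alpha\setminus\{0\}$ the resolvent $\RR_{\LLL_\eps}(z)$ is well-defined and converges to $\RR_{\LLL_0}(z)$ in operator norm as $\eps\to0$, while on a fixed small circle $\Gamma$ around $0$ the same estimate gives $\|\RR_{\LLL_\eps}(z) - \RR_{\LLL_0}(z)\|_{\BBB(L^2(m))} = O(\eps)$ uniformly on $\Gamma$. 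A standard Rouché/Riesz-projection argument (as in~\cite{Kato}) then yields, for $\eps$ below some $\eps_2\in(0,\eps_1)$, that $\Sigma(\LLL_\eps)\cap\Delta_\alpha$ consists of exactly one eigenvalue $\mu_\eps$, simple, with spectral projector $\Pi_\eps = \frac{1}{2i\pi}\oint_\Gamma \RR_{\LLL_\eps}(z)\,dz$ close to $\Pi_0$; and since $\LLL_\eps$ is in divergence form, $\LLL_\eps^* 1 = Q_\eps^*[\JJJ_{G_\eps}]1 = 0$, so $0\in\Sigma(\LLL_\eps)$, forcing $\mu_\eps = 0$. This proves (i).

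For (ii) I would invoke the quantitative spectral-mapping / enlargement theorem of~\cite{GMM} (in the factorised form already used for $\LLL_0$): outside the circle $\Gamma$, the decomposition $S_{\LLL_\eps}(t) = \sum_{\ell=0}^{n-1}(S_{\BB_\eps}*(\AA S_{\BB_\eps})^{(*\ell)})(t) + (S_{\BB_\eps}*(\AA S_{\BB_\eps})^{(*n)} * S_{\LLL_\eps})(t)$ combined with the uniform hypodissipativity of $\BB_\eps+1$, the regularising bound of Lemma~\ref{lemma:RegSB2}, Proposition~\ref{prop:semigroupGeneral}, and the inverse-Laplace representation of $R_\eps(t) := S_{\LLL_\eps}(t)(I - \Pi_\eps)$ over a vertical line $\re z = \alpha$ gives $\|R_\eps(t)\|_{\BBB(L^2(m))}\le C\,e^{\alpha t}$; the constant is uniform in $\eps\in[0,\eps_2]$ because every ingredient (the $\BB_\eps$ bounds, the $\AA S_{\BB_\eps}$ regularisation, and the uniform-in-$\eps$ control of $\RR_{\LLL_\eps}(z)$ on the line just obtained in step two) is uniform in $\eps$. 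The main obstacle is precisely this last uniformity: one must check that the resolvent estimate for $\LLL_\eps$ on the whole vertical line $\re z = \alpha$ (not just on a bounded region) degrades like $O(1/|z|)$ with an $\eps$-independent constant — this is where the $O(\eps)$ perturbation bound from $H^1_v(m)$ into $L^2(m)$ must be married carefully with the large-$|z|$ decay of $\RR_{\BB_\eps}(z)$ so that the Neumann series $(I - \AA\RR_{\BB_\eps}(z))^{-1}$ stays uniformly bounded for $|{\rm Im}\,z|$ large, uniformly in $\eps$. Once that is in hand, the rest is bookkeeping already carried out for $\eps=0$.
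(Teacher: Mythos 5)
Your overall strategy (perturbing $\LLL_0$, using the splitting $\LLL_\eps=\AA+\BB_\eps$, a Riesz--projection/Kato argument near $0$, and $\LLL_\eps^*1=0$ to force $\mu_\eps=0$) is the same as the paper's, but the central quantitative step has a genuine gap. You claim that $P_\eps:=\LLL_\eps-\LLL_0=-\eps\,\partial_v\big((v-\JJJ(G_\eps))\,\cdot\big)+\eps\,\JJJ(\cdot)\,\partial_vG_\eps$ is $O(\eps)$ as an operator from $H^1_v(m)$ into $L^2(m)$, and you then compose with $\RR_{\LLL_0}(z)\in\BBB(L^2(m),H^1_v(m))$ through the second resolvent identity. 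The first claim is false as stated: the perturbation contains the unbounded coefficient $v-\JJJ(G_\eps)$, so $\|(v-\JJJ(G_\eps))\partial_v h\|_{L^2(m)}$ is not controlled by $\|\partial_v h\|_{L^2(m)}$; the paper only obtains $\|P_\eps h\|_{L^2(m_1)}\le C\,\eps\,\|h\|_{H^1_v(m_2)}$ with a strictly stronger weight ($\kappa_1<\kappa_2$, $m_1=m$) in the source space. Since Proposition~\ref{prop:L0Ext}(ii) gives $\RR_{\LLL_0}(z)$ bounded from $L^2(m)$ into $H^1_v(m)$ with the \emph{same} weight (no weight- or moment-gaining resolvent bound for $\LLL_0$ at $z\neq0$ is available in the paper; Lemma~\ref{lem:EstimL0perp} gives such a gain only at $z=0$ on mean-zero data), the composition $P_\eps\RR_{\LLL_0}(z)$ is not known to be bounded on $L^2(m)$, let alone $O(\eps)$. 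Moreover the second resolvent identity presupposes $z\in\rho(\LLL_\eps)$, which is exactly what has to be proved, so it cannot by itself establish well-definedness of $\RR_{\LLL_\eps}(z)$. The paper's way out is precisely the structural device your argument is missing: it studies $K_\eps(z)=-P_\eps\,\RR_{\LLL_0}(z)\,\AA\,\RR_{\BB_\eps}(z)$ together with the factorization $(\LLL_\eps-z)\,\UU_\eps(z)=I+K_\eps(z)$, $\UU_\eps(z)=\RR_{\BB_\eps}(z)-\RR_{\LLL_0}(z)\,\AA\,\RR_{\BB_\eps}(z)$; because $\AA=N\chi_R$ is compactly supported, $\AA\,\RR_{\BB_\eps}(z)$ maps $L^2(m)$ into $L^2(m_2)$ for an arbitrarily strong exponential weight $m_2$, which restores exactly the weight lost by $P_\eps$ and yields $\|K_\eps(z)\|_{\BBB(L^2(m))}\lesssim\eps\,(1+|z|^{-1})$, hence invertibility of $I+K_\eps(z)$ outside a ball $B(0,\eta_2(\eps))$ and an explicit right inverse of $\LLL_\eps-z$. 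The same weight issue affects your claimed $O(\eps)$ bound of $\RR_{\LLL_\eps}-\RR_{\LLL_0}$ on the small circle $\Gamma$; the paper instead compares the projectors $\Pi_\eps-\Pi_0$ through their factorized representations (its Step~3), where $P_\eps$ again only ever appears sandwiched so that $\AA$ recovers the weight.

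For part~(ii), your plan (iterated Duhamel expansion plus an inverse-Laplace representation of $R_\eps(t)=S_{\LLL_\eps}(t)(I-\Pi_\eps)$ on the line $\re z=\alpha$) is reasonable in outline, and you correctly identify the uniform-in-$\eps$ large-$|z|$ resolvent bound as the crux, but you do not resolve it; moreover its resolution again hinges on the uniform smallness of $K_\eps(z)$ on all of $\Delta_\alpha\setminus B(0,\eta_2(\eps))$, i.e.\ on the factorized estimate above, together with the uniform hypodissipativity of $\BB_\eps$ (Lemma~\ref{lemma:Bdiss}) and the uniform regularization of $(\AA S_{\BB_\eps})^{(*n)}$ (Lemma~\ref{lemma:RegSB2}), which is how the paper (following~\cite{Tristani,GMM}) obtains the constant $C_{\LLL_{\eps_1}}$ independent of $\eps$. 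As a minor point, with the paper's convention $\RR_\Lambda(z)=(\Lambda-z)^{-1}$ the factorization reads $\RR_{\LLL_\eps}(z)=\RR_{\BB_\eps}(z)(I+\AA\RR_{\BB_\eps}(z))^{-1}$, not with a minus sign.
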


\bigskip
To enlighten the key points of the proof we present it in three steps: \textit{accurate preliminaries, geometry of the spectrum} of the linear operator in the small connectivity regime and \textit{sharp study of the spectrum close to 0:}
\bigskip

\noindent
  {\sl Step 1. Accurate preliminaries:} Let us introduce the operator 
  $$
  P_\eps=\LLL_\eps-\LLL_0 = -\,\eps\,\partial_v((v-\JJJ(G_\eps))\,\cdot)+\eps\,\JJJ(\cdot)\,\partial_v G_\eps.
  $$
Our aim is to estimate the convergence to 0 of this operator in a suitable norm. We notice that, for two exponential weights $m_1,m_2$ as in~\eqref{eq:weightFunc} with $\kappa_1<\kappa_2$, it holds
\begin{eqnarray*}
  \|P_\eps h\|_{L^2(m_1)}^2 &\leq& C\,\eps^2\int_{\R^2}\big( h^2+ v^2|\partial_vh|^2\big)\, m_1^2+C\,\eps^2\JJJ(h)^2
  \\
  &\leq& C\,\eps^2\big(\|h\|_{L^2(m_1)}^2+\|\partial_v h\|_{L^2(m_2)}^2\big),
\end{eqnarray*}
where $C$ depends only on the parameters of the system and, in the small connectivity regime, on $\eps_1$. Therefore, there exists $C_{P_{\eps_1}}>0$ such that
$$
  \|P_\eps h\|_{L^2(m_1)} \leq C_{P_{\eps_1}}\,\eps\|h\|_{H^1_v(m_2)}.
$$

\bigskip

\noindent{\sl Step 2. Geometry of the spectrum of $\LLL_\eps$.} 

\begin{lem}
 For any $z\,\in\,\Delta_{\alpha}$, $z\neq0$ let us define $K_\eps(z)$ by
 $$
  K_\eps(z)=-\,P_\eps\, \RR_{\LLL_0}(z)\,\AA \RR_{\BB_\eps}(z).
 $$
 Then, there exists $\eta_2(\eps)\xrightarrow[\eps\rightarrow0]{}0$, such that
 $$
  \forall\,z\,\in\,\Omega_{\eps}\,:=\,\Delta_{\alpha}\setminus \bar B(0,\eta_2(\eps)),\quad\|K_\eps(z)\|_{\BBB(L^2(m))}\leq \eta_2(\eps)(1+\eta_2(\eps)).
 $$
 Moreover, there exists $\eps_2\in(0,\eps_1]$ such that for any $\eps\in[0,\eps_2]$ we have
 \begin{enumerate}
  \item $I+K_\eps(z)$ is invertible for any $z\in\Omega_\eps$
  \item $\LLL_\eps-z$ is also invertible for any $z\in\Omega_\eps$ and
  $$
   \forall\,z\in\Omega_\eps,\quad \RR_{\LLL_\eps}(z)=\UU_\eps(z)\big(I+K_\eps(z)\big)^{-1}
   $$
   where
   $$
   \UU_\eps(z)=\RR_{\BB_\eps}(z)-\RR_{\LLL_0}(z)\,\AA\,\RR_{\BB_\eps}(z).
  $$
 \end{enumerate}
 We thus deduce that
 $$
  \Sigma(\LLL_\eps)\cap\Delta_{\alpha}\subset B(0,\eta_2(\eps)).
 $$
\end{lem}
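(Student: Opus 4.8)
The strategy is a standard Neumann-series perturbation argument built on the factorization of the resolvent. The key algebraic identity I would first establish is that, for $z\in\Delta_\alpha$ with $z\neq0$,
\begin{equation}\nonumber
\LLL_\eps-z \,=\, (\LLL_0-z) + P_\eps \,=\, \big(I + P_\eps\,\RR_{\LLL_0}(z)\big)(\LLL_0-z),
\end{equation}
but since $\RR_{\LLL_0}(z)$ is only controlled in $\BBB(L^2(m),H^1_v(m))$ by Proposition~\ref{prop:L0Ext}(ii), and $P_\eps$ eats an $H^1_v$ norm (Step 1), one needs a further factorization to gain regularity. I would use the identity $\RR_{\LLL_0}(z) = \RR_{\BB_0}(z) - \RR_{\LLL_0}(z)\,\AA\,\RR_{\BB_0}(z)$ (the resolvent version of $\LLL_0=\AA+\BB_0$), and its $\eps$-analogue, to write
\begin{equation}\nonumber
\UU_\eps(z) := \RR_{\BB_\eps}(z) - \RR_{\LLL_0}(z)\,\AA\,\RR_{\BB_\eps}(z),
\end{equation}
which will turn out to be a right parametrix: a direct computation gives $(\LLL_\eps-z)\,\UU_\eps(z) = I + K_\eps(z)$ with $K_\eps(z) = -P_\eps\,\RR_{\LLL_0}(z)\,\AA\,\RR_{\BB_\eps}(z)$, using that $(\BB_\eps-z)\RR_{\BB_\eps}(z)=I$, that $\LLL_\eps = \BB_\eps+\AA$, and that $(\LLL_0-z)\RR_{\LLL_0}(z)=I$.

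**Estimating $K_\eps(z)$.** The core quantitative step is to bound $\|K_\eps(z)\|_{\BBB(L^2(m))}$. Reading $K_\eps(z) = -P_\eps\circ\RR_{\LLL_0}(z)\circ\AA\circ\RR_{\BB_\eps}(z)$ from right to left: $\AA\,\RR_{\BB_\eps}(z)$ is bounded from $L^2(m)$ into $H^2_v(m)\subset L^2(m_2)$ for any $z\in\Delta_\alpha$ with $z\neq0$ — this follows from Lemma~\ref{lemma:RegSB2} together with $\RR_{\BB_\eps}(z)=\int_0^\infty e^{-zt}S_{\BB_\eps}(t)\,dt$ and the uniform-in-$\eps$ hypodissipativity, giving a bound of the form $C(1+|z|^{-1})$ (with a mild singularity at $z=0$, harmless since we work on $\Omega_\eps$); then $\RR_{\LLL_0}(z)$ maps $L^2(m)$ into $H^1_v(m)$ with norm $\le C_{H^1_v}(1+|z|^{-1})$ by Proposition~\ref{prop:L0Ext}(ii); finally $P_\eps$ maps $H^1_v(m_2)$ into $L^2(m_1)$ with norm $\le C_{P_{\eps_1}}\,\eps$ by Step~1. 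Composing, $\|K_\eps(z)\|_{\BBB(L^2(m))}\le C\,\eps\,(1+|z|^{-1})^2$ on $\Delta_\alpha\setminus\{0\}$ (with the appropriate choice of weights $\kappa_1<\kappa_2$ so the spaces match up). Choosing $\eta_2(\eps)$ so that $C\,\eps\,(1+\eta_2(\eps)^{-1})^2 \le \eta_2(\eps)(1+\eta_2(\eps))$ — e.g. $\eta_2(\eps)\sim \eps^{1/3}$ — yields $\eta_2(\eps)\to0$ and the claimed bound $\|K_\eps(z)\|_{\BBB(L^2(m))}\le \eta_2(\eps)(1+\eta_2(\eps))$ on $\Omega_\eps=\Delta_\alpha\setminus\bar B(0,\eta_2(\eps))$.

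**Invertibility and conclusion.** Once $\eps_2$ is picked so small that $\eta_2(\eps)(1+\eta_2(\eps))<1$ for all $\eps\in[0,\eps_2]$, the operator $I+K_\eps(z)$ is invertible on $L^2(m)$ for every $z\in\Omega_\eps$ by the Neumann series, with inverse bounded by $(1-\eta_2(\eps)(1+\eta_2(\eps)))^{-1}$. From $(\LLL_\eps-z)\UU_\eps(z)=I+K_\eps(z)$ we get that $\LLL_\eps-z$ is surjective; a symmetric computation of a left parametrix (or the abstract fact that a closed operator with a bounded right inverse modulo an invertible perturbation, together with the already-known Fredholm structure coming from the compact embedding $H^2_v(m)\hookrightarrow L^2(m)$ in Lemma~\ref{lemma:RegSB2}, is invertible) shows $\LLL_\eps-z$ is invertible with $\RR_{\LLL_\eps}(z)=\UU_\eps(z)(I+K_\eps(z))^{-1}$. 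Hence no spectrum of $\LLL_\eps$ lies in $\Omega_\eps$, i.e. $\Sigma(\LLL_\eps)\cap\Delta_\alpha\subset B(0,\eta_2(\eps))$. I expect the main obstacle to be bookkeeping the weight indices $m_1,m_2$ (and possibly a third weight) consistently through the three-fold composition while keeping the constants uniform in $\eps\in[0,\eps_1]$, and making sure the $|z|^{-1}$ singularities near $z=0$ are absorbed correctly — this is exactly why the punctured region $\Omega_\eps$ (excluding the disc $\bar B(0,\eta_2(\eps))$) is needed rather than all of $\Delta_\alpha$.
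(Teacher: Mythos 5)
Your overall architecture coincides with the paper's: the same factorization $(\LLL_\eps-z)\,\UU_\eps(z)=I+K_\eps(z)$, the same three-factor estimate of $K_\eps(z)$ (smallness of $P_\eps$ from Step 1, the resolvent bound of Proposition~\ref{prop:L0Ext}(ii), boundedness of $\AA\,\RR_{\BB_\eps}(z)$), a Neumann series for $I+K_\eps(z)$, and the passage from a right inverse to invertibility, which the paper also delegates to the argument of~\cite[Lemma 2.16]{Tristani}. The one step that would fail as written is your justification of the bound on $\AA\,\RR_{\BB_\eps}(z)$: you claim it maps $L^2(m)$ into $H^2_v(m)$ with a bound $C(1+|z|^{-1})$ by Laplace-transforming the estimate of Lemma~\ref{lemma:RegSB2}, but that estimate carries the non-integrable singularity $t^{-9/2}$ at $t=0^+$, so $\int_0^\infty e^{-zt}S_{\BB_\eps}(t)\,dt$ cannot be controlled in $H^2_v(m)$ this way --- this is precisely why Lemma~\ref{lemma:RegSB2} passes to the $n$-fold convolution $(\AA S_{\BB_\eps})^{(\ast n)}$ before integrating in time. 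In addition, the inclusion you invoke, $H^2_v(m)\subset L^2(m_2)$, goes in the wrong direction when $m=m_1$ and $\kappa_1<\kappa_2$, since $m_2$ is the heavier weight.

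The repair is simple and in fact simplifies your chain, and it is what the paper does: no regularization through $\RR_{\BB_\eps}(z)$ is needed at all. Since $\AA=N\chi_R$ is bounded and compactly supported, it upgrades weights for free, and since $\BB_\eps+1$ is hypodissipative uniformly in $\eps\in[0,\eps_1]$ (Lemma~\ref{lemma:Bdiss}), $\RR_{\BB_\eps}(z)$ is bounded on $L^2(m)$ uniformly on $\Delta_\alpha$, $\alpha>-1$, with no singularity at $z=0$; hence $\|\AA\,\RR_{\BB_\eps}(z)\|_{\BBB(L^2(m),L^2(m_2))}\le C_{\eps_1}$ uniformly in $z$. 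One then applies Proposition~\ref{prop:L0Ext}(ii) in the weight $m_2$ (it holds for any exponential weight), so that $\RR_{\LLL_0}(z):L^2(m_2)\to H^1_v(m_2)$ and $P_\eps:H^1_v(m_2)\to L^2(m)$ chain without the mismatch you flagged. The composite bound is then $C\,\eps\,(1+|z|^{-1})$ with a single singular factor rather than your square; your weaker bound together with the choice $\eta_2(\eps)\sim\eps^{1/3}$ would still give $\eta_2\to0$ and the same conclusions (the paper takes $\eta_2(\eps)\sim\eps^{1/2}$), so that difference is harmless. With this one step corrected, the remainder of your argument is the paper's proof.
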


\begin{proof}
 We define $m_1$ and $m_2$ two exponential weights with $m_1=m$. From Lemma~\ref{lemma:Bdiss}, Proposition~\ref{prop:L0Ext} and the Step 1 we get that for any $z\in\Omega_\eps$, any $h\in L^2(m)$
\begin{eqnarray*}
 \|K_\eps(z) h\|_{L^2(m)} &\leq& \eps\,C_{P_{\eps_1}}\|\RR_{\LLL_0}(z)\AA\,\RR_{\BB_\eps}(z)h\|_{H^1_v(m_2)}
 \\
 &\leq& \eps\, C_{P_{\eps_1}}\,C_{H^1_v}(1+|z|^{-1})\|\AA\,\RR_{\BB_\eps}(z)h\|_{L^2(m_2)}
 \\
 &\leq& \eps\, C_{P_{\eps_1}}\,C_{H^1_v}(1+|z|^{-1})\, C_{\eps_1}\|h\|_{L^2(m)},
\end{eqnarray*}
where $C_{\eps_1}$ is an upper bound of $\|\AA\RR_{\BB_\eps}\|_{\BBB(L^2(m),L^2(m_2))}$ and do not depend on $\eps$.
Defining $$\eta_2(\eps):=(\eps\,C_{P_{\eps_1}}\,C_{H^1_v}\,C_{\eps_1})^{1/2},$$ it holds
$$
  \|K_\eps(z)\|_{\BBB(L^2(m))}\leq \eta_2(\eps)^2(1+\eta_2(\eps)^{-1})=\eta_2(\eps)(1+\eta_2(\eps)),\quad  \forall\,z\,\in\Omega_\eps,
$$
therefore, fixing $\eps_2>0$ such that $$\eta_2(\eps)<1/2,\quad \forall\,\eps\in(0,\eps_2],$$ we obtain the invertibility of $I+K_\eps(z)$.
\smallskip

Finally, for any $z\,\in\,\Omega_\eps$:
$$
  (\LLL_\eps-z)\,\UU_\eps(z )= I+K_\eps(z),
$$
then there exists a right inverse of $\LLL_\eps-z$. The rest of the proof is similar to the proof of~\cite[Lemma 2.16]{Tristani}.
 \end{proof}

%
%
%
\noindent
{\sl Step 3. Sharp study of spectrum close to 0.} 


%

Let us fix $r\in(0,-\alpha]$ and choose any $\eps_r\in[0,\eps_2]$ such that $\eta_2(\eps_r)<r$ in such a way that $\Sigma(\LLL_\eps)\cap\Delta_{\alpha}\subset B(0,r)$ for any $\eps\in[0,\eps_r]$.  We may define the spectral projection operator
$$
 \Pi_\eps:=-\frac1{2\pi i}\int_{|z'|=r}\RR_{\LLL_\eps}(z')\,dz'.
$$
We have then the
\begin{lem}\label{lemma:?}
 The operator $\Pi_\eps$ is well defined and bounded in $L^2(m)$. Moreover, for any $\eps\in[0,\eps_r]$, it holds
 $$
  \|\Pi_{\eps}-\Pi_{0}\|_{\BBB(L^2(m))}\leq \eta_3(\eps),
 $$
 for some $\eta_3(\eps)\xrightarrow[\eps\rightarrow0]{}0$.
\end{lem}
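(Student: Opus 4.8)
The plan is to compare the two spectral projections through the contour integral representation. First I would write
$$
\Pi_\eps - \Pi_0 = -\frac{1}{2\pi i}\int_{|z'|=r}\big(\RR_{\LLL_\eps}(z') - \RR_{\LLL_0}(z')\big)\,dz',
$$
which is legitimate because for $\eps\in[0,\eps_r]$ both $\LLL_\eps$ and $\LLL_0$ have spectrum in $\Delta_\alpha$ reduced to a subset of $B(0,r)$ (by the previous lemma and Proposition~\ref{prop:L0Ext}), so the circle $\{|z'|=r\}\subset\Omega_\eps$ lies in the common resolvent set and encloses the relevant part of both spectra. Hence it suffices to bound $\|\RR_{\LLL_\eps}(z') - \RR_{\LLL_0}(z')\|_{\BBB(L^2(m))}$ uniformly for $|z'|=r$ by a quantity tending to $0$ with $\eps$, and then integrate (the contour has fixed length $2\pi r$, so no length blow-up occurs as $\eps\to0$, since $r$ is fixed).

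Next I would estimate the resolvent difference. The natural tool is the factorised formula from the previous lemma: $\RR_{\LLL_\eps}(z) = \UU_\eps(z)(I+K_\eps(z))^{-1}$, with $\UU_\eps(z) = \RR_{\BB_\eps}(z) - \RR_{\LLL_0}(z)\,\AA\,\RR_{\BB_\eps}(z)$ and $K_\eps(z) = -P_\eps\RR_{\LLL_0}(z)\AA\RR_{\BB_\eps}(z)$. For $|z'|=r$ with $r$ fixed, Lemma~\ref{lemma:Bdiss} gives a uniform bound on $\|\RR_{\BB_\eps}(z')\|_{\BBB(L^2(m))}$ and on $\|\AA\RR_{\BB_\eps}(z')\|$ into the better-weighted space (as in the proof of the previous lemma, with the constant $C_{\eps_1}$ independent of $\eps$), Proposition~\ref{prop:L0Ext} controls $\RR_{\LLL_0}(z')$ into $H^1_v(m)$, and Step~1 gives $\|P_\eps h\|_{L^2(m)} \le C_{P_{\eps_1}}\,\eps\,\|h\|_{H^1_v(m_2)}$. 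Combining these, $\|K_\eps(z')\|_{\BBB(L^2(m))} = O(\eps)$ uniformly on the circle (this is exactly $\eta_2(\eps)(1+\eta_2(\eps))$ type behaviour), so $(I+K_\eps(z'))^{-1} = I + O(\eps)$ by Neumann series; and similarly $\UU_\eps(z') - \UU_0(z')$ is $O(\eps)$ because it involves $\RR_{\BB_\eps}(z') - \RR_{\BB_0}(z')$, which one controls by writing $\RR_{\BB_\eps} - \RR_{\BB_0} = \RR_{\BB_\eps}(\BB_0-\BB_\eps)\RR_{\BB_0}$ and noting $\BB_\eps-\BB_0 = P_\eps$ (the truncation $\AA$ cancels). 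Putting the pieces together, $\RR_{\LLL_\eps}(z') - \RR_{\LLL_0}(z') = O(\eps)$ in $\BBB(L^2(m))$ uniformly in $z'$ on the circle, so integrating gives $\|\Pi_\eps - \Pi_0\|_{\BBB(L^2(m))} \le \eta_3(\eps)$ with $\eta_3(\eps) = O(\eps) \to 0$.

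The well-definedness and boundedness of $\Pi_\eps$ is immediate once one knows $\{|z'|=r\}\subset\rho(\LLL_\eps)$: the integrand is a norm-continuous $\BBB(L^2(m))$-valued function on a compact set, hence integrable. I would state this first as a one-line remark before the estimate. The main obstacle I expect is the bookkeeping of weights: the operator $P_\eps$ only maps into $L^2(m_1)$ when applied to something in $H^1_v(m_2)$ with $\kappa_1<\kappa_2$, and $\RR_{\LLL_0}$ only regularises into $H^1_v$ with a $|z|^{-1}$ loss near $z=0$ — but since here $|z'|=r$ is bounded away from $0$, that singular factor is harmless and only contributes a constant depending on $r$. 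The delicate point is therefore to chain the weighted estimates consistently (choosing the $\kappa_i$ hierarchy so that each operator in the composition lands in the space the next one needs), exactly as in the proof of the preceding lemma; no new idea is required, only care. One then simply sets $\eta_3(\eps) := 2\pi r \cdot C\,\eps$ (with $C$ the aggregate constant, independent of $\eps$ on $[0,\eps_r]$) to conclude, possibly shrinking $\eps_r$ so that the Neumann series converges.
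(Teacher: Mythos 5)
Your argument is correct and follows essentially the same route as the paper: both compare the contour-integral representations of $\Pi_\eps$ and $\Pi_0$ via the factorisation $\RR_{\LLL_\eps}(z)=\UU_\eps(z)(I+K_\eps(z))^{-1}$, the uniform $O(\eps)$ bound on $K_\eps(z)$ on the circle $|z'|=r$ (where the $(1+|z|^{-1})$ factor is harmless), a Neumann-series treatment of $(I+K_\eps)^{-1}$, and the resolvent identity $\RR_{\BB_\eps}-\RR_{\BB_0}=\pm\,\RR_{\BB_\eps}P_\eps\RR_{\BB_0}$ together with the weighted bound on $P_\eps$. The only cosmetic difference is that the paper first drops the holomorphic contributions $\int_{|z'|=r}\RR_{\BB_\eps}(z')\,dz'=0$ before estimating, whereas you bound the resolvent difference directly; the resulting $O(\eps)$ pieces (and the weight bookkeeping you rightly flag) are the same.
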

\begin{proof}
 Let us notice that
\begin{equation}\nonumber
 \Pi_0\,\,=\,\, -\frac{1}{2\pi i}\int_{|z'|=r}(\RR_{\BB_0}(z')-\RR_{\LLL_0}\,\AA\,\RR_{\BB_0}(z'))\,dz'\,\,=\,\,\frac{1}{2\pi i}\int_{|z'|=r}\RR_{\LLL_0}\,\AA\,\RR_{\BB_0}(z')\,dz'
\end{equation}
and
\begin{eqnarray*}
 \Pi_\eps &=& -\frac{1}{2\pi i}\int_{|z'|=r}(\RR_{\BB_\eps}(z')-\RR_{\LLL_0}\,\AA\,\RR_{\BB_\eps}(z'))(I+K_\eps(z'))^{-1}\,dz'
 \\
 &=&\frac{1}{2\pi i}\int_{|z'|=r}\RR_{\BB_\eps}(z')\,K_\eps(z')(I+K_\eps(z'))^{-1}\,dz'
 \\
 &&\quad+\frac{1}{2\pi i}\int_{|z'|=r}\RR_{\LLL_0}\,\AA\,\RR_{\BB_\eps}(z')(I+K_\eps(z'))^{-1}\,dz'.
\end{eqnarray*}
Then, we deduce that
\begin{eqnarray*}
 \Pi_\eps-\Pi_0 &=& \frac{1}{2\pi i}\int_{|z'|=r}\RR_{\BB_\eps}(z')\,K_\eps(z')(I+K_\eps(z'))^{-1}\,dz'
 \\
 &&\quad+\frac{1}{2\pi i}\int_{|z'|=r}\RR_{\LLL_0}\,\AA\,(\RR_{\BB_\eps}(z')-\RR_{\BB_0}(z'))\,dz'
 \\
 &&\qquad+\frac{1}{2\pi i}\int_{|z'|=r}\RR_{\LLL_0}\,\AA\,\RR_{\BB_\eps}(z')(I-(I+K_\eps(z'))^{-1})\,dz',
\end{eqnarray*}
here, the first and third terms are going to 0 because of the upper bounds of $K_\eps(z)$. For the second term, it suffices to notice that
\begin{equation}\nonumber
\RR_{\BB_\eps}(z')-\RR_{\BB_0}(z') \,\,=\,\, \RR_{\BB_0}(z')\,(\BB_\eps-\BB_0)\,\RR_{\BB_\eps}(z'),
\end{equation}
and use that $(\BB_\eps-\BB_0)=P_\eps$.
\end{proof}

To conclude the proof we recall the following lemma from~\cite[paragraph I.4.6]{Kato}
\begin{lem}
 Let $X$ be a Banach space and $P,Q$ two projectors in $\BBB(X)$ such that $\|P-Q\|_{\BBB(X)}<1$. Then the ranges of $P$ and $Q$ are isomorphic. In particular, $\text{dim}(R(P))=\text{dim}(R(Q))$.
\end{lem}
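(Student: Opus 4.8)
The statement is the classical fact that two sufficiently close projections have isomorphic ranges, and the plan is to prove it by the intertwining (``rotation'') operator construction. First I would introduce
$$
U := QP + (I-Q)(I-P), \qquad V := PQ + (I-P)(I-Q) ,
$$
which both belong to $\BBB(X)$, and record --- using only $P^2=P$, $Q^2=Q$ and $P(I-P)=Q(I-Q)=0$ --- the intertwining identities
$$
UP = QU = QP , \qquad VQ = PV = PQ ,
$$
together with
$$
UV = VU = I - (P-Q)^2 =: W ,
$$
the last line obtained by expanding the products and cancelling the mixed terms. This algebraic bookkeeping is the only mildly tedious part, and I would carry it out once, carefully.

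Next, since $\|(P-Q)^2\|_{\BBB(X)} \le \|P-Q\|_{\BBB(X)}^2 < 1$, the operator $W$ is invertible in $\BBB(X)$ by the Neumann series. The intertwining relations give $PW = P(VU) = (PV)U = (VQ)U = V(UP) = WP$ (using $PV=VQ$ and $QU=UP$) and, symmetrically, $QW = WQ$, so $W^{-1}$ also commutes with $P$ and with $Q$; hence $W$ restricts to an invertible operator on each of the \emph{closed} subspaces $R(P)=\ker(I-P)$ and $R(Q)=\ker(I-Q)$. Moreover $U$ maps $R(P)$ into $R(Q)$ (if $Px=x$ then $Ux=UPx=QUx\in R(Q)$) and, symmetrically, $V$ maps $R(Q)$ into $R(P)$. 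Then $U|_{R(P)}\colon R(P)\to R(Q)$ is injective, since $Ux=0$ with $x\in R(P)$ forces $Wx=VUx=0$, hence $x=0$; and it is surjective, since for $y\in R(Q)$ one sets $z:=(W|_{R(Q)})^{-1}y\in R(Q)$ and then $y=Wz=U(Vz)$ with $Vz\in R(P)$. Its inverse is the bounded operator $(W|_{R(P)})^{-1}\,V|_{R(Q)}$. Thus $R(P)\cong R(Q)$ as Banach spaces, and whenever one of them is finite-dimensional this isomorphism gives $\dim R(P)=\dim R(Q)$.

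Finally I would use this to close the proof of Theorem~\ref{theo:LepsExt}: applying the lemma with $P=\Pi_\eps$ and $Q=\Pi_0$, Lemma~\ref{lemma:?} gives $\|\Pi_\eps-\Pi_0\|_{\BBB(L^2(m))}\le\eta_3(\eps)<1$ for $\eps$ small enough, while $\dim R(\Pi_0)=1$ by Proposition~\ref{prop:L0Ext}; hence $\dim R(\Pi_\eps)=1$, so $\mu_\eps$ is a simple eigenvalue, and since $\LLL_\eps$ stays in divergence form, $\LLL_\eps^*1=0$ forces $\mu_\eps=0$, which is part (i). For part (ii) one writes $S_{\LLL_\eps}(t)=e^{\mu_\eps t}\Pi_\eps+R_\eps(t)$ with $R_\eps(t)=S_{\LLL_\eps}(t)(I-\Pi_\eps)$, and the bound $\|R_\eps(t)\|_{\BBB(L^2(m))}\le C\,e^{\alpha t}$ follows from the inverse-Laplace representation of $R_\eps(t)$ combined with the resolvent factorisation $\RR_{\LLL_\eps}(z)=\UU_\eps(z)(I+K_\eps(z))^{-1}$ valid on $\Omega_\eps$. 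The only point requiring care is that the constant $C$ be independent of $\eps\in[0,\eps_2]$ --- which is exactly why the $\eps$-uniform constants $C_{P_{\eps_1}}$, $C_{H^1_v}$ and $C_{\eps_1}$ were isolated along the way --- and beyond this uniformity check I do not anticipate any genuine obstacle.
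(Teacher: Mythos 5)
Your proof of the lemma is correct and complete. Note that the paper itself offers no proof of this statement: it is quoted directly from \cite[paragraph I.4.6]{Kato}, and your intertwining-operator construction is essentially the classical argument found there. The algebra checks out: $UP=QU=QP$, $PV=VQ=PQ$, and $UV=VU=I-(P-Q)^2=:W$; the Neumann series makes $W$ invertible since $\|(P-Q)^2\|\le\|P-Q\|^2<1$; the commutation relations $PW=WP$, $QW=WQ$ (and hence with $W^{-1}$, and also $UW=WU$) justify restricting $W$ to the closed subspaces $R(P)=\ker(I-P)$ and $R(Q)=\ker(I-Q)$ and give the injectivity/surjectivity of $U|_{R(P)}:R(P)\to R(Q)$ with bounded inverse $(W|_{R(P)})^{-1}V|_{R(Q)}$ exactly as you state. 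Your closing paragraph, on how the lemma feeds into Theorem~\ref{theo:LepsExt}, goes beyond the statement under review and is only sketched there (in particular the $\eps$-uniform bound on $R_\eps(t)$ in part (ii) would require the quantitative resolvent estimates to be spelled out, which is the content of the paper's surrounding steps rather than of this lemma), but this does not affect the validity of your proof of the lemma itself.
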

\noindent Provided with this lemma and fixing $\eps'$ such that $\eta_3(\eps')<1$, we get the
\begin{cor}
 There exists $\eps'>0$ such that for any $\eps\in[0,\eps']$ there holds
 $$
  \Sigma(\LLL_\eps)\cap\Delta_{\alpha}=\{\mu_\eps\}\quad\text{and the eigenspace associated to }\mu_\eps\text{ is 1-dimensional.}
 $$
\end{cor}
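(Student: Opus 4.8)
The statement is a short corollary of the two preceding lemmas together with Proposition~\ref{prop:L0Ext}(i), and the plan is simply to combine them. First I would fix $r\in(0,-\alpha]$ and then choose $\eps'>0$ small enough that simultaneously $\eta_2(\eps')<r$ and $\eta_3(\eps')<1$; this is possible since both $\eta_2$ and $\eta_3$ tend to $0$ as $\eps\to0$, and after shrinking $\eps'$ if necessary we may assume $\eps'\le\eps_2\le\eps_1\le\eps_0$, so that every estimate obtained above is available. For such an $\eps'$ and any $\eps\in[0,\eps']$, the geometry lemma of Step~2 gives $\Sigma(\LLL_\eps)\cap\Delta_\alpha\subset B(0,\eta_2(\eps))\subset B(0,r)$; hence the circle $\{|z'|=r\}$ lies in the resolvent set of $\LLL_\eps$, and the spectral projection
\[
\Pi_\eps=-\frac1{2\pi i}\int_{|z'|=r}\RR_{\LLL_\eps}(z')\,dz'
\]
is a well-defined bounded operator on $L^2(m)$ whose range is precisely the spectral subspace attached to $\Sigma(\LLL_\eps)\cap\Delta_\alpha$.

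Next I would invoke Lemma~\ref{lemma:?}, which gives $\|\Pi_\eps-\Pi_0\|_{\BBB(L^2(m))}\le\eta_3(\eps)<1$ for all $\eps\in[0,\eps']$, and then the quoted lemma of Kato on close projections: $R(\Pi_\eps)$ and $R(\Pi_0)$ are isomorphic, so $\dim R(\Pi_\eps)=\dim R(\Pi_0)$. By Proposition~\ref{prop:L0Ext}(i), $\Sigma(\LLL_0)\cap\Delta_{\bar\alpha}=\{0\}$ with $0$ simple, i.e.\ $\dim R(\Pi_0)=1$, whence $\dim R(\Pi_\eps)=1$. A one-dimensional spectral subspace can only be attached to a single point of the spectrum, and $\LLL_\eps$ acts on it as multiplication by a scalar; therefore $\Sigma(\LLL_\eps)\cap\Delta_\alpha=\{\mu_\eps\}$ for some $\mu_\eps$, which is an eigenvalue of algebraic (hence geometric) multiplicity one, so its eigenspace is $1$-dimensional. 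To pin down $\mu_\eps$ I would use that $\LLL_\eps$ is in divergence form, so $\LLL_\eps^*1=Q_\eps^*[\JJJ(G_\eps)]\,1=0$; thus $0$ is an eigenvalue of $\LLL_\eps^*$, hence $0\in\Sigma(\LLL_\eps)$, and since $\alpha<0$ gives $0\in\Delta_\alpha$, comparison forces $\mu_\eps=0$.

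There is essentially no difficult step left here: all the genuine analytic work — the resolvent bound of Proposition~\ref{prop:L0Ext}, the factorisation $\RR_{\LLL_\eps}(z)=\UU_\eps(z)(I+K_\eps(z))^{-1}$ with its norm control of $K_\eps$, and the estimate $\|\Pi_\eps-\Pi_0\|\le\eta_3(\eps)$ — has been carried out in the preceding lemmas. The only point requiring a little care is the \emph{joint} choice of $r$ and $\eps'$, so that on the one hand $r$ isolates the near-zero part of $\Sigma(\LLL_\eps)$ (condition $\eta_2(\eps')<r$) and on the other hand the two spectral projections are close enough for Kato's lemma to apply (condition $\eta_3(\eps')<1$); once these are arranged the conclusion is immediate.
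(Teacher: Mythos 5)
Your proposal is correct and follows essentially the same route as the paper: confine $\Sigma(\LLL_\eps)\cap\Delta_\alpha$ inside the contour via the bound on $K_\eps$, compare the spectral projections $\Pi_\eps$ and $\Pi_0$ using $\eta_3(\eps)<1$ and Kato's lemma, and transfer the simplicity of the eigenvalue $0$ of $\LLL_0$ to conclude that the spectral subspace of $\LLL_\eps$ is one-dimensional. The additional identification $\mu_\eps=0$ via $\LLL_\eps^*1=0$ is a harmless extra that the paper records in Theorem~\ref{theo:LepsExt} rather than in the corollary itself.
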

 
 \subsection{Exponential stability of the NL equation}
 
 In the small connectivity regime $\eps\in(0,\eps')$, let us consider the variation $h:=f_\eps-G_\eps$, with $f_\eps$ the solution to~\eqref{eq:FhNs} and $G_\eps$ the unique solution to~\eqref{eq:StatExist} given by Theorem~\ref{th:StatExist}. By definition, $h$ satisfies the evolution PDE:
\begin{equation}\nonumber
 \partial_t h \,=\, \LLL_0 h -\eps\partial_v(vh)+\eps\JJJ(f_\eps)\partial_vf_\eps-\eps\JJJ(G_\eps)\partial_vG_\eps 
 \,=\, \LLL_\eps h+\eps\JJJ(h)\partial_v h,
\end{equation}
moreover, the nonlinear part is such that
\begin{eqnarray*}
 \|\eps\JJJ(h)\partial_v h\|_{L^2(m)}&\leq& C\,\eps\,\|h\|_{L^2(m)}\|\partial_vh\|_{L^2(m)}
\end{eqnarray*}
for some positive constant $C$.

\begin{proof}[Proof of Theorem~\ref{th:WeakCregime}]
Let us first notice that, thanks to inequality~\eqref{eq:H1m} and the definition of $\JJJ(\cdot)$, we have that
$$
 \|\eps\JJJ(h)\partial_v h\|_{L^2(m)}\leq C_{NL}\,\eps\,\|h\|_{L^2(m)},\qquad \forall\, h_0\in H^1(m),
$$
where
$$
 C_{NL}\,\,=\,\, c_\delta^{-1}\max(C_2,\|h_0\|_{H^1(m)}).
$$
On the other hand, Duhamel's formula reads
$$
 h=S_{\LLL_\eps}(t)h_0+\int_0^t S_{\LLL_\eps}(t-s)\big(\eps\JJJ(h)\partial_v h\big)\,ds,
$$
then, we have that
\begin{eqnarray*}
 u(t)\,\,:=\,\,\|h\|_{L^2(m)} &\leq& \|S_{\LLL_\eps}(t)h_0\|_{L^2(m)}+\int_0^t\|S_{\LLL_\eps}(t-s)\big(\eps\JJJ(h)\partial_v h\big)\|_{L^2(m)}\,ds\\
 &\leq& C_{\LLL_{\eps_1}}\,e^{\alpha t}\|h_0\|_{L^2(m)}+C_{\LLL_{\eps_1}}C_{NL}\,\eps\,\int_0^t e^{\alpha(t-s)}\|h\|_{L^2(m)}\,ds
 \\
 &=&C_{\LLL_{\eps_1}}\,e^{\alpha t}u(0)+C_{\LLL_{\eps_1}}C_{NL}\,\eps\,\int_0^t e^{\alpha(t-s)}u(s)\,ds.
\end{eqnarray*}
In particular, 
$$ 
 u(t)\,\leq\, C_{\LLL_1}\,u(0)\,e^{(\alpha+C_{\LLL_{\eps_1}}C_{NL}\eps)t},
$$
Summarising, it suffices to define $\eta^*(\eps):= C_2/\sqrt{\eps}$ to get that for any $f_0$ such that
$$
 \|f_0-G_\eps\|_{H^1(m)}\,\,\leq\,\,\eta^*(\eps),
$$
it holds
$$
 \|f_\eps(t)-G_\eps\|_{L^2(m)}\,\leq\, C_{\LLL_{\eps_1}} \|f_0-G_\eps\|_{L^2(m)} e^{\alpha^* t},
$$
with 
$$
\alpha^*\,\,=\,\,\alpha+C_{\LLL_{\eps_1}}c_{\delta}^{-1}C_2\sqrt{\eps^*}<0,
$$
if $\eps^*$ is small enough.
\end{proof}

\section{Open problems beyond the weak coupling regime}
\label{sec:Beyond}
\setcounter{equation}{0}
\setcounter{theo}{0}

In the weak coupling regime, we have demonstrated that existence and uniqueness of solutions persist. In that regime, noise overcomes nonlinear effects and the system is mixing: one finds a unique distribution with an everywhere strictly positive density. As coupling increases, highly non-trivial phenomena may emerge as nonlinear effects of the McKean-Vlasov equation. For instance, it is likely that in another asymptotic regime in which coupling is non-trivial and noise goes to zero, Dirac-delta distributed solutions shall emerge (in which all neurons are synchronized and their voltage and adaptation variable are equal to one of the stable fixed point of the deterministic Fitzhugh-Nagumo ODE). 

Here, we numerically explore the dynamics of the Fitzhugh-Nagumo McKean-Vlasov equation using a Monte-Carlo algorithm. We observe that complex phenomena occur as the coupling is varied. That numerical evidence tends to show that several additional equilibria may emerge, the stability of stationary solutions may change as a function of connectivity levels, and attractive periodic solution in time may emerge. These regimes are particularly interesting from the application viewpoint: indeed, among important collective effects in biology, from large networks often emerge bistable high-state of down-states (characterized by high or low firing rates), and even oscillations. These two phenomena are particularly important in developing and storing memories, and this occurs by slowly reinforcing connections~\cite{kandel-schwartz-etal:00}. Interestingly, these two types of behaviors emerge naturally in the FhN McKean Vlasov equation beyond weak coupling. For instance, for fixed $\sigma=0.5$, we present the solutions of the particle system varying the connectivity weight beyond small values, both in the bistable case (in which the  FhN model presents two stable attractors) and the excitable regime, the most relevant for biological applications, characterized by a single stable equilibrium and a manifold separating those trajectories doing large excursions (spikes) from those returning to the resting state directly. In both cases, we observe (i) that the unique stationary solution is not centered close from a fixed point of the dynamical system: neurons intermittently fire in an asynchronous manner for small coupling. As coupling increases, a periodic attractive solution emerges, before the appearance of distinct stationary solutions (two in the bistable case, one in the excitable case). These phenomena are depicted in Fig.~\ref{fig:beyond}. Proving, for larger coupling, the existence and stability of a periodic solution or distinct and multiple stationary solutions constitute exciting perspectives of this work. 

\begin{figure}
	\centering
		\includegraphics[width=.7\textwidth]{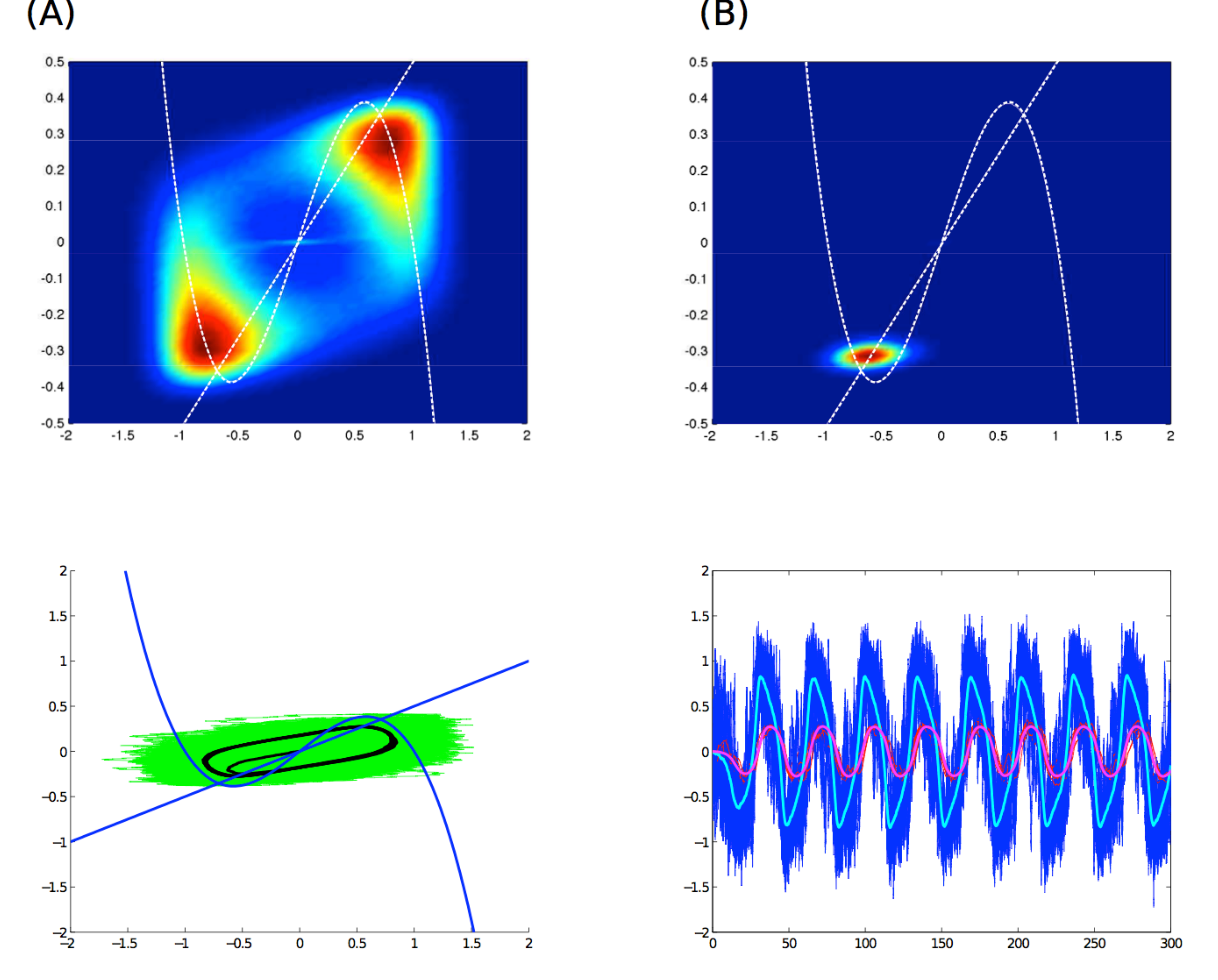}
	\caption{Permanent (non-transient) regimes of the FhN particle system for $N=2\,000$. Top row: $J=0.1$ (A) and $J=3$ (B), bottom row: $J=1$. The unique stationary solution in the small coupling limit  analyzed in the manuscript visits both attractors transiently (A), while in the high coupling regime (B), the system remains around one of the attractors (the system has at least two such solutions). In an intermediate regime, the system shows periodic oscillations (bottom row). }
	\label{fig:beyond}
\end{figure}

These phenomena are actually conjectured to be generic in coupled excitable systems subject to noise. 

\appendix

\section{Mean-Field limit for Fitzhugh-Nagumo neurons}
\label{app:MFLim}
\setcounter{equation}{0}
\setcounter{theo}{0}

Let us start by a well known result with is a simple application of global existence and path wise uniqueness for system of SDE, see~\cite[Chapter 5, Theorems 3.7 and 3.11]{ethier-et-kurtz:86} for example. Consider the particle system for $1\leq i\leq N$:
\begin{equation}\label{eq:IBM}
 \begin{cases}
  \displaystyle dv^i_t = \big(v_t^i\,(v_t^i-\lambda)\,(1-v_t^i)-x_t^i+I_0\big)\,dt+\frac JN\sum_{j=1}^N \big(v_t^i-v_t^j\big)\,dt+ dW^i_t \\
    \displaystyle dx^i_t = (-a x_t^i+bv^i_t)dt,
 \end{cases}
\end{equation}
with initial data $(X_0^i,V_0^i)$ for $1\leq i\leq N$ distributed according to $f_0\in\Pp_2(\R^2)$, i.e., a probability measure in $\R^2$ with finite second moment. Here the $(W_t^i)_{t\geq0}$ are $n$ independent standard Brownian motions in $\R$. This result was stated in~\cite{BFFT}. In that paper, the authors use a stopping in the $n$-voltage variables which requires finely controlling all trajectories. We prove here a simpler version of the result based on \emph{a-priori} estimates.

\begin{lem}\label{lemme:IBM}
 Let $f_0\in\Pp(\R^2)$ be a probability with finite second moment, and a set of random variables $(X_0^i,V_0^i)$ with law $f_0$. Then~\eqref{eq:IBM} admits a path wise unique global solution with initial datum $(X_0^i,V_0^i)$ for $1\leq i\leq N$. 
\end{lem}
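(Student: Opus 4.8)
The plan is to run the classical localization scheme for stochastic differential equations with locally Lipschitz coefficients, the only substantial work being a non-explosion estimate which is uniform in the number $N$ of particles. First I would observe that, viewed as an SDE on $\R^{2N}$ for the vector $(x^1_t,v^1_t,\dots,x^N_t,v^N_t)$, the drift of \eqref{eq:IBM} is a polynomial vector field, hence locally Lipschitz, and the diffusion coefficient is constant; therefore, by the standard strong existence and pathwise uniqueness theory (for instance \cite[Chapter 5, Theorems 3.7 and 3.11]{ethier-et-kurtz:86}), for each initial datum $(X^i_0,V^i_0)$ there is a pathwise unique strong solution defined up to the explosion time $\tau_\infty=\lim_{R\to\infty}\tau_R$, where
$$
\tau_R:=\inf\Big\{t\ge0:\ \sum_{i=1}^N\big(|x^i_t|^2+|v^i_t|^2\big)\ge R^2\Big\}.
$$
It then remains only to show $\tau_\infty=+\infty$ almost surely.

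For this I would use the same Lyapunov function $M(x,v)=1+x^2/2+v^2/2$ as in \eqref{eq:weightFunc}, reproducing at the level of the particle system the computation behind Lemma~\ref{lemma:L1M}. Applying Itô's formula to $\sum_{i=1}^N M(x^i_t,v^i_t)$ and stopping at $\tau_R$ (so that each $v^i$ is bounded and the stochastic integrals are genuine martingales with zero expectation), the generator contributions are: $N$ from the second-order term, $\sum_i x^i_t(-ax^i_t+bv^i_t)$ from the adaptation variable, which with $a,b>0$ is $\le -\tfrac a2\sum_i|x^i_t|^2+C\sum_i|v^i_t|^2$; the dissipative quartic term $-\sum_i(v^i_t)^4$ from the cubic FitzHugh--Nagumo nonlinearity plus lower-order polynomial terms; and the coupling term $\tfrac JN\sum_{i,j}v^i_t(v^i_t-v^j_t)=J\big(\sum_i(v^i_t)^2-\tfrac1N(\sum_iv^i_t)^2\big)$, which by Cauchy--Schwarz lies in $[0,J\sum_i(v^i_t)^2]$. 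Absorbing all subquartic contributions into the quartic one and into an additive constant, I obtain constants $K_1,K_2>0$ depending only on $a,b,\lambda,I_0,J$ (but not on $N$ other than through the displayed factor $N$) such that, for every $R$ and every $t\ge0$,
\begin{equation}\nonumber
\frac{d}{dt}\,\E\!\sum_{i=1}^N M\big(x^i_{t\wedge\tau_R},v^i_{t\wedge\tau_R}\big)\ \le\ K_1 N+K_2\,\E\!\sum_{i=1}^N M\big(x^i_{t\wedge\tau_R},v^i_{t\wedge\tau_R}\big).
\end{equation}
Since $\E\sum_i M(x^i_0,v^i_0)=N\big(1+\tfrac12\int_{\R^2}(x^2+v^2)\,f_0\big)<\infty$ because $f_0$ has finite second moment, Gronwall's lemma gives, for each $T>0$, a bound $\E\sum_i M(x^i_{t\wedge\tau_R},v^i_{t\wedge\tau_R})\le C(T)\,N$ on $[0,T]$ with $C(T)$ independent of $R$.

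Finally, on the event $\{\tau_R<T\}$ one has $\sum_i M(x^i_{T\wedge\tau_R},v^i_{T\wedge\tau_R})\ge R^2/2$, so Markov's inequality gives $\Pp(\tau_R<T)\le 2C(T)N/R^2\to0$ as $R\to\infty$ for every $T$; hence $\tau_\infty=+\infty$ almost surely and the solution is global. Pathwise uniqueness on $[0,\infty)$ follows by patching together the pathwise uniqueness valid on each $[0,\tau_R]$, where the coefficients are (globally) Lipschitz. The one genuine difficulty is the non-explosion estimate: one must verify that the strong quartic dissipativity produced by the cubic nonlinearity dominates simultaneously the electrical coupling and the linear $v\!\leftrightarrow\!x$ exchange; the mean-field normalization $J/N$ is exactly what keeps the coupling contribution of order $\sum_i(v^i)^2$, so that $K_1,K_2$ can be chosen independently of $N$ — which is what also makes this estimate reusable for the propagation of chaos analysis of Appendix~\ref{app:MFLim}.
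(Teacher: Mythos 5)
Your proposal is correct and rests on exactly the same key computation as the paper: the one-sided bound in which the quartic dissipativity of the cubic nonlinearity absorbs the mean-field coupling and the $x$--$v$ exchange, i.e.\ the paper's estimate $\langle \mathbf{Z}^N,\mathbf{b}(\mathbf{Z}^N)\rangle\le C(1+\|\mathbf{Z}^N\|^2)$, which is the drift part of your It\^o computation for $\sum_i M(x^i,v^i)$. The only difference is presentational: the paper invokes the standard criterion of Mao (locally Lipschitz drift plus this monotonicity condition implies global strong existence and pathwise uniqueness), whereas you reprove that criterion by hand via stopping times, Gronwall and Markov's inequality, which is a perfectly valid, slightly more self-contained rendering of the same argument.
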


\begin{proof}
The system~\eqref{eq:IBM} can be written in $\R^{2N}$ as the SDE
\begin{equation}\nonumber
d\mathbf{Z}_t^N = \sigma^N\,d\mathbf{B}_t^N+\mathbf{b}(\mathbf{Z}_t^N)\,dt,
\end{equation}
where $\mathbf{Z}_t^N=(x_t^1,v_t^1,\ldots,x_t^N,v_t^N)$, $\sigma^N$ is a constant $2N\times 2N$ sparse matrix, $(\mathbf{B}_t^N)_{t\geq0}$ is a standard Brownian motion on $\R^{2N}$, and $\mathbf b:\R^{2N}\rightarrow\R^{2N}$ is a function defined in the obvious way. It is easy to see that $\mathbf{b}$ is a locally Lipschitz function, moreover, letting $\langle\cdot,\cdot\rangle$ and $\|\cdot\|$ the scalar product and the Euclidean norm on $\R^{2N}$ respectively, then for all $\mathbf{Z}^N=(x^1,v^1,\ldots,x^N,v^N)$,
\begin{eqnarray*}
 \langle\mathbf{Z}^N,\mathbf{b}(\mathbf{Z}^N)\rangle &=& \sum_{i=1}^N x^i(-ax^i+bv^i) + \sum_{i=1}^N v^i\big(v^i\,(v^i-\lambda)\,(1-v_t^i)-x^i+I_0\big)+\frac JN\sum_{i.j=1}^N v^i\big(v^i-v^j\big)\\
 &\leq& \sum_{i=1}^N (b-1)x^iv^i + \sum_{i=1}^N \big(J|v^i|^2-a|x^i|^2\big)-\frac JN\sum_{i.j=1}^N v^iv^j+CN\\
 &\leq& C(1+\|\mathbf{Z}^N\|^2).
\end{eqnarray*}
This is a sufficient condition for global existence and pathwise uniqueness (see e.g.~\cite{Mao2007}).
\end{proof}

\subsection*{Mean-Field limit}

Now we turn to the propagation of chaos property. We already know the existence and uniqueness of the particle system~\eqref{eq:IBM}, moreover the nonlinear SDE:
\begin{equation}\label{eq:nonLinearSDE}
 \begin{cases}
  \displaystyle d\bar v_t =  \big(\bar v_t(\bar v_t-\lambda)(1-\bar v_t)-\bar x_t+I\big)\,dt+J\int_{\R^2}(\bar v_t-v)\,df_t(x,v)\,dt+ dW_t,\\
  \displaystyle d\bar x_t = (-a\bar x+b\bar v_t)dt\\
  \displaystyle f_t=\text{law}(\bar x_t,\bar v_t),\quad\text{law}(\bar x_0,\bar v_0)=f_0.
 \end{cases}
\end{equation}
is also well-posed for $f_0\in L^1(M^2)\cap L^1\log L^1\cap\Pp(\R^2)$, as a consequence of Theorem~\ref{th:E&U&B}. Then, for instance, we can sate the

\begin{theo}\label{theorem:PropagationOfChaos}
Let $f_0$ be a Borel probability measure and $(X_0^i,V_0^i)$ for $1\leq i\leq N$ be $N$ independent variables with law $f_0$. Let us assume that the solutions to~\eqref{eq:IBM} and~\eqref{eq:nonLinearSDE} with initial data $(X_0^i,V_0^i)$ and $f_0$ are well defined on $[0,T]$ and such that
\begin{equation}\label{eq:ChaosCondition1}
\sup_{[0,T]}\Big\{\int_{\R^2}(|x|^2+|v|^2)\,df_t(x,v)\Big\}<+\infty,
\end{equation}
with $f_t=\text{law}(\bar x_t^i,\bar v_t^i)$ (which actually does not depend on $i$ by exchangeability). Then there exists a constant $C>0$ such that
\begin{equation}\label{eq:ChaosProp1}
 \E\big[|x_t^i-\bar x_t^i|^2+|v_t^i-\bar v_t^i|^2\big]\leq\frac{C}{N}e^{Ct}.
\end{equation}
\end{theo}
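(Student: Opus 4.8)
The plan is to run the classical synchronous coupling argument (Sznitman's method). On the probability space carrying the Brownian motions $(W^i_t)$, I would introduce the system of $N$ \emph{independent} nonlinear processes $(\bar x^i_t,\bar v^i_t)$, $1\le i\le N$, each solving \eqref{eq:nonLinearSDE} driven by $W^i$ and started from $(X^i_0,V^i_0)$; by Theorem~\ref{th:E&U&B} these are well defined, i.i.d.\ with common law $(f_t)$, while \eqref{eq:IBM} is well posed by Lemma~\ref{lemme:IBM}. Combining \eqref{eq:ChaosCondition1} with the standard a priori second-moment estimate for \eqref{eq:IBM}, both $\sup_{[0,T]}\E(|x^i_t|^2+|v^i_t|^2)$ and $\sup_{[0,T]}\E(|\bar x^i_t|^2+|\bar v^i_t|^2)$ are finite. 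Set $y^i_t:=x^i_t-\bar x^i_t$ and $u^i_t:=v^i_t-\bar v^i_t$; since the driving noise is common to both systems, $u^i$ has no martingale part, so $t\mapsto|y^i_t|^2+|u^i_t|^2$ is absolutely continuous with $y^i_0=u^i_0=0$, and no Itô correction term appears.

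Next I would differentiate $\E(|y^i_t|^2+|u^i_t|^2)$ in time. The adaptation part is immediate: $\tfrac{d}{dt}\E|y^i_t|^2=2\,\E[y^i_t(-a y^i_t+b u^i_t)]\le C\,\E(|y^i_t|^2+|u^i_t|^2)$. For the voltage part, the cubic contributes $2\,\E[u^i_t(f(v^i_t)-f(\bar v^i_t))]$ with $f(v)=v(v-\lambda)(1-v)$; the key observation is that $f'(v)=-3v^2+2(1+\lambda)v-\lambda$ is bounded from above, so $f$ is one-sided Lipschitz, $(f(v)-f(\bar v))(v-\bar v)\le C_0|v-\bar v|^2$, and this term is absorbed into $2C_0\,\E|u^i_t|^2$ with no need for moments beyond order two. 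The linear coupling term rearranges as
$$
\frac{J}{N}\sum_{j=1}^N(v^i_t-v^j_t)-J\!\int_{\R^2}(\bar v^i_t-v)\,df_t=J u^i_t-\frac{J}{N}\sum_{j=1}^N u^j_t-J\,\xi^N_t,\qquad \xi^N_t:=\frac{1}{N}\sum_{j=1}^N\bar v^j_t-\E\bar v^1_t,
$$
and by exchangeability and Cauchy--Schwarz $\E\big[|u^i_t|\,\big|\tfrac1N\sum_j u^j_t\big|\big]\le\E|u^1_t|^2$, while the fluctuation term satisfies $\E|\xi^N_t|^2=\tfrac1N\,\mathrm{Var}(\bar v^1_t)\le\tfrac{C}{N}$ by independence of the $\bar v^j_t$ and \eqref{eq:ChaosCondition1}.

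Collecting all contributions and setting $R_N(t):=\E(|y^1_t|^2+|u^1_t|^2)$ (which by exchangeability equals $\E(|y^i_t|^2+|u^i_t|^2)$ for every $i$), one obtains a differential inequality $R_N'(t)\le C\,R_N(t)+C/N$ with $R_N(0)=0$, the constant $C$ depending only on $a,b,\lambda,I_0,J$ and the bound in \eqref{eq:ChaosCondition1}. Grönwall's lemma then yields $R_N(t)\le\tfrac{C}{N}(e^{Ct}-1)\le\tfrac{C}{N}e^{Ct}$, which is exactly \eqref{eq:ChaosProp1}. I expect the main obstacle to be not any single estimate but the rigorous justification that all expectations above are finite and that Itô's formula applies on $[0,T]$; this is handled by a localisation argument (stopping times $\tau_R=\inf\{t:|x^i_t|+|v^i_t|\ge R\}$) and passing to the limit $R\to\infty$ using the a priori second-moment bounds. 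Once this is in place, the one-sided Lipschitz structure of the cubic and the law-of-large-numbers bound on $\xi^N_t$ make the Grönwall step routine.
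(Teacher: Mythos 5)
Your proposal is correct and follows essentially the same route as the paper: synchronous coupling of the interacting particles with i.i.d.\ copies of the nonlinear SDE driven by the same Brownian motions, a one-sided Lipschitz bound for the cubic drift, exchangeability plus Cauchy--Schwarz for the averaged coupling term, a variance/law-of-large-numbers bound of order $1/N$ for the fluctuation of the i.i.d.\ nonlinear particles, and Gr\"onwall's lemma. Your explicit splitting of the coupling term into $J u^i_t-\frac JN\sum_j u^j_t-J\xi^N_t$ is in fact a slightly cleaner bookkeeping of the step the paper performs with the quantities $Y^j$, and your localisation remark only adds rigor without changing the argument.
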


\begin{proof}
We start by writing $X^i_t=x^i_t-\bar x^i_t$ and $V^i_t=v^i_t-\bar v^i_t$. For notational convenience we drop the time dependence subindex and take $J=1$. Because $x^i_t$ and $\bar x^i_t$ are driven by the same Brownian motion, we have that
\begin{equation}\nonumber
 \begin{cases}
  \displaystyle d V^i = \big(v^i(v^i-\lambda)(1-v^i)-\bar v^i(\bar v^i-\lambda)(1-\bar v^i)- X^i\big)\,dt+\frac 1N\sum_{j=1}^N \big(v_t^i-v_t^j\big)\,dt-\int_{\R^2}(\bar v^i-v)\,df_t(x,v)\,dt\\ 
  \displaystyle d X^i = (-aX^i+b V^i)dt,
\end{cases}
\end{equation}

We define $\alpha(t)=\E\big[|X^i|^2+|V^i|^2\big]$ which is independent of the label $i$ by symmetry and exchangeability of the system. It is not hard to see that
\begin{equation}\nonumber
 \frac12\frac d{dt}\E\,\big[|X^i|^2\big]=\E\big[b|X^i|\,|V^i|-a|X^i|^2 \big]\leq \frac b2\alpha(t),
\end{equation}
and
\begin{eqnarray*}
 \frac12\frac d{dt}\E\,\big[|V^i|^2\big]&=&\E\,\big[V^i\big(v^i(v^i-\lambda)(1-v^i)-\bar v^i(\bar v^i-\lambda)(1-\bar v^i) - X^i\big)\big]\\
 &&\quad+\,\E\,\Big[\frac {V^i}N\sum_{j=1}^N \big(v_t^i-v_t^j\big)\,dt-V^i\int_{\R^2}(\bar v^i-v)\,df_t(x,v)\Big]=:\SS_1+\SS_2.
\end{eqnarray*}

\noindent\textit{Estimate for $\SS_1$:} Let us first notice that 
  \begin{eqnarray*}
 v^i(v^i-\lambda)(1-v^i)-\bar v^i(\bar v^i-\lambda)(1-\bar v^i)&=& -(|v^i|^3-|\bar v^i|^3)+(1+\lambda)(|v^i|^2-|\bar v^i|^2)-\lambda V^i \\
 &=& -V^i(|v^i|^2+v^i\,\bar v^i+|\bar v^i|^2)+(1+\lambda)V^i(|v^i|+|\bar v^i|)-\lambda V^i,
  \end{eqnarray*}
  therefore
  $$
   \SS_1 \,\,=\,\, \E[|V^i|^2(-|v^i|^2-v^i\,\bar v^i-|\bar v^i|^2+(1+\lambda)(|v^i|+|\bar v^i|)-\lambda)]-\E[V^iX^i],
  $$
 and by consequence there is some constant $C>0$ such that
  \begin{equation}\label{eq:S1bound}
 \SS_1\,\, \leq \,\, C\alpha(t).
 \end{equation}

\noindent \textit{Estimate for $\SS_2$:} By definition, it holds
 \begin{eqnarray*}
 \SS_2 &=& \E\,\Big[V^i(v_t^i-\bar v^i)-\frac {V^i}N\sum_{j=1}^N \big(v_t^j-\int_{\R^2}v\,df_t(x,v)\big)\Big]\\
 &=&\E\,\big[|V^i|^2\big]-\frac 1N\,\E\,\,\Big[V^i\sum_{j=1}^N \Big(v_t^j-\int_{\R^2}v\,df_t(x,v)\Big)\Big].
 \end{eqnarray*}
 Moreover, by symmetry we know that $\SS_2$ does not depend on a particular $i$, therefore we take $i=1$ to get
 \begin{equation}\nonumber
  \SS_2\leq\E\,\big[|V^1|^2\big]+\frac{1}{N}\Big(\E\,\big[|V^1|^2\big]\Big)^{1/2}\Big(\E\,\Big[\big|\sum_{j=2}^N \Big(v_t^j-\int_{\R^2}v\,df_t(x,v)\Big)\big|^2\Big]\Big)^{1/2}.
 \end{equation}
Now, defining $Y^j=v_t^j-\int_{\R^2}v\,df_t(x,v)$, for $j\neq k$, we find that
 \begin{equation}\nonumber
 \E\big[Y^j Y^k\big]=\E\Big[\E\big[Y^j\mid(\bar x^1,\bar v^1)\big]\E\big[Y^k\mid(\bar x^1,\bar v^1)\big]\Big],
\end{equation}
 but
 \begin{equation}\nonumber
  \E\big[Y^j\mid(\bar x^1,\bar v^1)\big]=\E\Big[v_t^j-\int_{\R^2}v\,df_t(x,v)\Big]=0.
 \end{equation}
 Hence, fixing $j_\ast\in\{2,\ldots,N\}$
 \begin{eqnarray*}
 \E\Big[\big|\sum_{j=2}^N \Big(v_t^j-\int_{\R^2}v\,df_t(x,v)\Big)\big|^2\Big]&=&(N-1) \E\Big[\big|v_t^{j_\ast}-\int_{\R^2}v\,df_t(x,v)\Big)\big|^2\Big]\\
 &=&(N-1)\int_{\R^2}\Big(w-\int_{\R^2}v\,df_t(x,v)\Big)^2\,df_t(y,w)\leq C(N-1),
 \end{eqnarray*}
 since the second moment of $f_t$ is uniformly bounded in $[0,T]$. Finally we conclude that
 \begin{equation}\label{eq:S2bound}
 \SS_2 \,\, \leq \,\, \alpha(t)+\alpha(t)^{1/2}\frac{C}{\sqrt{N}}.
 \end{equation}
 
 Finally, going back to the bounds on $\alpha(t)$, we put together~\eqref{eq:S1bound} and~\eqref{eq:S2bound} to find
 \begin{equation}\nonumber
  \frac{d}{dt}\alpha(t)\leq C\alpha(t)+2\alpha(t)^{1/2}\frac{C}{\sqrt{N}}\leq C\alpha(t)+\frac{C}{N},
 \end{equation}
and using Gr\"onwal's Lemma,
$$  
 \alpha(t)\leq \left(\alpha(0)+\frac{C}{N}\right)e^{Ct}=\frac{C}{N}e^{Ct}
$$ 
which finishes the proof.
\end{proof}

\section{Strong maximum principle for the linearized operator}
\label{app:MaximumPrinciple}
\setcounter{equation}{0}
\setcounter{theo}{0}

In this final appendix we shall extend the result provided in~\cite[Corollary A.20]{MR2562709} to our framework. These local positivity estimates are classical in hypoelliptic equations and they are a necessary condition for Theorem~\ref{th:StatExist}. Here, our result is time dependant and by consequence more general than it is needed in the applications.

In the sequel, we shall use the notation
$$
B_r(x_0,v_0)\,:=\,\{(x,v)\in\R^2\,;\quad |v-v_0|\leq r,\,|x-x_0|\leq r^3\},
$$
and come back to the classical notation $\nabla_{x,v}=D_{x,v}$ and $\partial^2_{vv}=\Delta_v$. Also, we simplify the problem by choosing $a=b=1$, but the proof can be easily extended to the general case.

\begin{theo}\label{th:FPtheo}
 Let $f(t,x,v)$ be a classical nonnegative solution of
 \begin{equation}\label{eq:FPapp}
  \frac{\partial}{\partial t} f-\Delta_v f= A(t,x,v)\,\nabla_vf+B(x,v)\,\nabla_x f +C(t,x,v)\, f
 \end{equation}
 in $[0,T)\times\Omega$, where $\Omega$ is an open subset of $\R^2$, and $A,C:[0,T)\times\R^2$ and bounded continuous functions and $B(x,v)=x-v$. Let $(x_0,v_0)\in\Omega$ and $\bar A$ and $\bar C$ upper bounds of respectively $\|A\|_{L^\infty}$ and $\|C\|_{L^\infty}$.
 
 Then, for any $r,\tau>0$ there are constants $\lambda,\, K>0$, only depending on $\bar A,$ $\bar C$ and $r^2/\tau$ such that the following holds: If $B_{\lambda r}(x_0,v_0)\subset \Omega$, $\tau<\min(1/2,-\log(r^3/2|x_0-v_0|))$ and $f\geq\delta>0$ in $[\tau/2,\tau)\times B_{r}(x_0,v_0)$, then $f\geq K\delta$ in $[\tau/2,\tau)\times B_{2r}(x_0,v_0)$.
\end{theo}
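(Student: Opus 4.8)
The plan is to adapt the classical Harnack-type chaining argument for hypoelliptic Kolmogorov-Fokker-Planck equations, as in Villani's appendix, to the present drift $B(x,v)=x-v$. The key structural point is that equation~\eqref{eq:FPapp} is a perturbation of the model operator $\partial_t - \Delta_v - B(x,v)\partial_x$, whose diffusion acts only in $v$ while transport in $x$ is generated by the commutator of $\partial_v$ with the first-order part. Since $[\partial_v,\,(x-v)\partial_x]=-\partial_x$ has a nondegenerate $x$-component, Hörmander's bracket condition holds, and the fundamental solution satisfies Gaussian-type bounds adapted to the anisotropic parabolic cylinders $B_r(x_0,v_0)=\{|v-v_0|\le r,\ |x-x_0|\le r^3\}$, which is precisely why the scaling $r^3$ appears in the definition of $B_r$. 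The condition $\tau<\min(1/2,-\log(r^3/2|x_0-v_0|))$ is exactly what guarantees that the transport does not push $(x_0,v_0)$ out of the region where the coefficients behave well over the relevant time scale.

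First I would freeze the coefficients: set $w:=f e^{-\bar C t}$, so that $w$ satisfies a similar equation with the zeroth-order term having nonpositive sign (or is absorbed), reducing matters to proving the spread of a lower bound for a nonnegative supersolution of $\partial_t w - \Delta_v w - A\nabla_v w - B\nabla_x w\le 0$. Next I would construct an explicit subsolution (a barrier) $\psi(t,x,v)$ on $[\tau/2,\tau)\times B_{2r}(x_0,v_0)$ of the form $\psi = \delta\,\varphi$ with $\varphi$ built from a Gaussian in the $v$ variable convolved with the transport flow in $x$, tuned so that: (i) $\psi\le f$ on the parabolic boundary pieces where we already know $f\ge\delta$ (namely the time slice near $\tau/2$ intersected with $B_r$, and the appropriate lateral boundary reached by characteristics), and (ii) $\mathcal L\psi\ge0$ in the interior, where $\mathcal L$ is the operator in~\eqref{eq:FPapp}. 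The parameters $\lambda$ (size of the enlarged domain $B_{\lambda r}\subset\Omega$ needed so that characteristics stay inside) and $K$ (the value of $\varphi$ at the relevant interior points of $B_{2r}$) come out of this construction and depend only on $\bar A,\bar C$ and the parabolic aspect ratio $r^2/\tau$ by the natural scaling $v\mapsto v/r$, $x\mapsto x/r^3$, $t\mapsto t/r^2$. Then the comparison principle for~\eqref{eq:FPapp} — valid since $A,B,C$ are bounded continuous and $B$ is affine (so the associated SDE has no blow-up, cf. Lemma~\ref{lemme:IBM}) — yields $f\ge\psi\ge K\delta$ on $[\tau/2,\tau)\times B_{2r}(x_0,v_0)$, which is the claim after undoing the substitution and renaming $K$.

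The main obstacle I expect is the barrier construction itself: one must produce a single explicit function that simultaneously propagates the lower bound both in the diffusive direction $v$ (standard, via a Gaussian) and, crucially, in the non-diffusive direction $x$, where positivity spreads only through the drift $B=x-v$ coupling $v$-fluctuations into $x$. This requires choosing the covariance structure of the Gaussian barrier to be the anisotropic one dictated by the commutator — roughly $\mathrm{Var}(v)\sim t$, $\mathrm{Var}(x)\sim t^3$, $\mathrm{Cov}(x,v)\sim t^2$ — and then checking the differential inequality $\mathcal L\psi\ge0$ survives the perturbations $A\nabla_v$ and the affine part of $B$; the constraint on $\tau$ enters to control the error terms $|x-v|$ along characteristics. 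Once the model barrier is in hand, absorbing the bounded perturbations $A,C$ is routine, and the dependence of $\lambda,K$ on $r^2/\tau$ is transparent from the parabolic scaling. I would organize the write-up as: (1) reduction and scaling to $r=1$; (2) statement and verification of the Gaussian barrier for the model operator; (3) perturbation argument absorbing $A$ and $C$; (4) comparison principle and conclusion.
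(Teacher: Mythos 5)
Your plan is essentially the paper's own proof, which likewise adapts Villani's hypoelliptic Harnack-type argument: tilt in time to absorb the zeroth-order term (the paper uses $g=e^{+\bar C t}f\ge f$, making $g$ a supersolution of the $C$-free operator, rather than your $e^{-\bar C t}$, but either normalization can be made to work), then build the explicit barrier $\delta e^{-\mu P}-\varepsilon$ with $P$ an anisotropic positive quadratic form in $(v-v_0)/t^{1/2}$ and $(x-X_t)/t^{3/2}$ centered on the characteristic $X_t=v_0+(x_0-v_0)e^{-t}$ — exactly the covariance scaling $t$, $t^2$, $t^3$ you identify — and conclude by comparison on $[0,\tau)\times(B_{\lambda r}\setminus B_r)$, the constraint on $\tau$ serving, as you say, to keep $X_t$ within $r^3/2$ of $x_0$. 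The step you defer (choosing $\alpha,\beta,\gamma,\mu$ and the offset $\varepsilon$ so that the barrier is a subsolution off $B_r$, lies below $g$ on $\partial B_r$ and $\partial B_{\lambda r}$, and remains bounded below by a multiple of $\delta$ on $B_{2r}$ for $t\in[\tau/2,\tau)$, with $\lambda,K$ depending only on $\bar A,\bar C,r^2/\tau$) is precisely the computation the paper carries out, so the route is the same.
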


Theorem~\ref{th:FPtheo} implies, via covering arguments in variables $t,x,v$ the

\begin{cor}
 If $f\geq0$ solves~\eqref{eq:FPapp} in $[0,T)\times\Omega$ and $f\geq\delta>0$ in $[0,T)\times B_r(x_0,v_0)$, then for any compact set $K\subset\Omega$ containing $(x_0,v_0)$ and for any $t_0\in(0,T)$, we have $f\geq\delta'>0$ in $[t_0,T)\times K$ where $\delta'$ only depends on $\bar A,\bar C, K, \Omega, x_0,v_0,r,t_0,\delta$.
\end{cor}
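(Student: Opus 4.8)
The plan is to adapt to the present setting the explicit barrier argument used by Villani for the kinetic Fokker--Planck operator, \cite[Appendix A, Corollary A.20]{MR2562709}. The only structural novelty here is that the $x$-transport carries the nonconstant coefficient $B(x,v)=x-v$ rather than a constant; but since $[\partial_v,\,(x-v)\partial_x]=-\partial_x$, the hypoelliptic (Kolmogorov) structure is unchanged, as is the parabolic scaling $(t,v,x)\mapsto(\mu^{2}t,\mu v,\mu^{3}x)$ that matches the shape of the balls $B_r(x_0,v_0)$. Accordingly I would first rescale so as to reduce to $r=1$ — this is why all the constants end up depending on $r$ only through $r^{2}/\tau$ — and then freeze the transport at the base point, writing
\[
 B(x,v)\,\nabla_x \,=\, b_0\,\nabla_x + \big[(x-x_0)-(v-v_0)\big]\,\nabla_x, \qquad b_0:=x_0-v_0,
\]
so that the equation becomes the model Kolmogorov operator $\partial_t-\Delta_v+b_0\,\partial_x$ plus an $O(\bar A,\bar C)$ perturbation collecting the bounded terms $A\,\nabla_v f$, $Cf$ and the remainder of the drift.

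The heart of the proof is the construction of an explicit nonnegative subsolution $\Phi$ of the full equation \eqref{eq:FPapp} on a cylinder $(\tau/2,\tau)\times B_{\lambda r}(x_0,v_0)$. I would take $\Phi$ to be a small constant times a truncated, time-shifted Gaussian adapted to the model operator — schematically
\[
 \Phi(t,x,v)\ \approx\ \kappa\,\delta\,e^{-\mu(t-\tau/2)}\,\exp\!\Big(-\frac{a_1\,|v-v_0|^{2}}{t}-\frac{a_2\,|x-x_0-b_0t|^{2}}{t^{3}}\Big),
\]
cut off to vanish outside a sublevel set whose section at $t=\tau/2$ is contained in $B_r(x_0,v_0)$, exactly as in \cite[Appendix A]{MR2562709}. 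With $a_1,a_2$ read off from the inverse covariance of the Kolmogorov kernel the model operator annihilates the untruncated Gaussian; since on the slab $t\in(\tau/2,\tau)$ the negative powers of $t$ produced by differentiating it are bounded, the perturbative contributions $|A|\,|\nabla_v\Phi|$, $\big|(x-x_0)-(v-v_0)\big|\,|\nabla_x\Phi|$ and $|C|\,\Phi$ are absorbed once $\mu=\mu(\bar A,\bar C,r^{2}/\tau)$ is taken large, so that $\partial_t\Phi-\Delta_v\Phi-A\,\nabla_v\Phi-B\,\nabla_x\Phi-C\,\Phi\le0$ and, after truncation, $\Phi$ is a global subsolution. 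Choosing $\kappa$ small and $\lambda$ large one arranges $\Phi\le\delta$ on $[\tau/2,\tau)\times B_r(x_0,v_0)$, $\Phi=0$ on $\{t=\tau/2\}\times\big(B_{\lambda r}\setminus\overline{B_r}\big)$, and $\Phi\le0$ on $[\tau/2,\tau)\times\partial B_{\lambda r}(x_0,v_0)$; since $f\ge\delta$ on $[\tau/2,\tau)\times B_r$ and $f\ge0$ everywhere, the weak maximum principle for the degenerate parabolic operator $\partial_t-\Delta_v+A\,\nabla_v+B\,\nabla_x$ (nonnegative characteristic form, bounded coefficients on the compact cylinder) gives $f\ge\Phi$ on $[\tau/2,\tau)\times B_{\lambda r}(x_0,v_0)$. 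Evaluating $\Phi$ at an arbitrary point of $[\tau/2,\tau)\times B_{2r}(x_0,v_0)$ then yields $f\ge K\delta$ there with $K=K(\bar A,\bar C,r^{2}/\tau)$, which is the claim.

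I expect the main obstacle to be the bookkeeping forced by the nonconstant drift $B=x-v$: the true characteristic $\dot\xi=\xi-v$ grows exponentially in $t$, so the moving center $x_0+b_0t$ in $\Phi$ is only an approximation of the exact shift, the support of the truncated Gaussian at time $t$ is the ball $B_r$ transported by the drift and dilated by the diffusion, and one must check that all these deformed cylinders remain inside $B_{\lambda r}(x_0,v_0)\subset\Omega$ for a $\lambda$ depending only on $\bar A,\bar C,r^{2}/\tau$. This is exactly where the quantitative hypotheses $\tau<1/2$ and $\tau<-\log(r^3/2|x_0-v_0|)$ enter: they bound, in terms of the distance of the base point to the zero set $\{x=v\}$ of the drift, how long the cylinders may be flowed before escaping. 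A cleaner route is to straighten the drift first, via the change of variables $y=e^{-t}(x-v_0)$ together with a Duhamel correction for the motion of $v$, and then build the Gaussian barrier for the resulting constant-drift operator; in either approach this distortion estimate, rather than the subsolution verification, is the delicate point. Finally, the Corollary follows from Theorem~\ref{th:FPtheo} by a standard chaining argument: the compact set $K\subset\Omega$ containing $(x_0,v_0)$ is covered by finitely many elementary cylinders, Theorem~\ref{th:FPtheo} is applied along a chain of overlapping cylinders joining $B_r(x_0,v_0)$ to each of them (advancing in time from $t_0$), and $\delta'$ is taken to be the product of the finitely many local positivity constants thereby produced.
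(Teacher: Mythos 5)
The paper's own proof of this corollary is exactly your final paragraph: a finite covering/chaining in the variables $t,x,v$ of $[t_0,T)\times K$ by cylinders on which Theorem~\ref{th:FPtheo} is applied repeatedly (translating in time and enlarging/moving the balls), with $\delta'$ the product of the finitely many local constants — so your approach is essentially the same. The long barrier construction that occupies most of your proposal is a re-proof of Theorem~\ref{th:FPtheo} itself, which the paper establishes separately (via the subsolution $\delta e^{-\mu P}-\varepsilon$ built on the exact characteristic $X_t=v_0+(x_0-v_0)e^{-t}$ rather than a frozen-drift Gaussian) and is not needed for the corollary, which uses the theorem only as a black box.
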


\begin{proof}[Proof of Theorem~\ref{th:FPtheo}]

We only explain how to adapt the proof of Theorem A.19 given in~\cite{MR2562709}. Let $g= e^{\bar C t}f(t,x,v)$; then $g\,\geq\, f$ and $\LL\, g\,\geq\,0$ in $(0,T)\times\Omega$, where
$$
 \LL = \partial_t+(v-x)\,\nabla_x-\Delta_v-A(t,x,v)\,\nabla_v.
$$

Next, we construct a particular subsolution for $\LL$. In the sequel, $B_r$ stands for $B_r(x_0,v_0)$ and we define $X_t(x_0,v_0)= v_0+(x_0-v_0)e^{-t}$. 
\bigskip

\noindent{\sl Step 1. Construction of the subsolution.}

For $t\in(0,\tau]$ and $(x,v)\in \Omega\setminus B_r$ let
$$
 P(t,x,v)=\alpha\frac{(v-v_0)^2}{2t}-\frac{\beta}{t^2}(v-v_0)(x-X_t)+\gamma\frac{(x-X_t)^2}{2t^3},
$$
with $\alpha,\beta,\gamma>0$ to be chosen later on. Let further define
$$
 \varphi(t,x,v)=\delta\, e^{-\mu \, P(t,x,v)}-\varepsilon,
$$
where $\mu,\varepsilon>0$ will also be chosen later on. If we assume that $\beta^2< \alpha\,\gamma$, then $P$ is a positive quadratic form in the variables $v-v_0$ and $x-X_t$. Clearly
$$
 \LL\,\varphi = -\mu \, \delta \, e^{-\mu\,P} \mathcal{E}(P),
$$
where
$$
 \mathcal{E}(P)=\partial_t P+(v-x)\,\nabla_xP-\triangle_vP+\mu\,|\nabla_vP|^2-A(t,x,v)\,\nabla_vP.
$$
By straightforward computation we find that $\EE=\EE_1+\EE_2$, with
\begin{eqnarray*}
 \mathcal{E}_1(P) & = & \Big(\mu\,\alpha^2-\frac{\alpha}{2}-\beta\Big)\frac{(v-v_0)^2}{t^2}+2\Big(\beta+\frac\gamma2-\mu\,\alpha\,\beta\Big)\frac{(v-v_0)(x-X_t)}{t^3}
\\
&&\qquad\qquad+\Big(\mu\,\beta^2-\frac{3\,\gamma}{2}\Big)\frac{(x-X_t)^2}{t^4}
\end{eqnarray*}
and
\begin{eqnarray*}
 \mathcal{E}_2(P) & = & \beta\,\frac{(v-v_0)(x-X_t)}{t^2}-\alpha\,\frac{1}{t}
\\
 &&\quad-\gamma\,\frac{(x-X_t)^2}{t^3}-\alpha\,\frac{A(t,x,v)(v-v_0)}{t}+\beta\,\frac{A(t,x,v)(x-X_t)}{t^2}.
\end{eqnarray*}

Now we notice that $\EE_1$ is defined by the quadratic form
$$
 M_q=\begin{bmatrix}
  \displaystyle\mu\,\alpha^2-\frac{\alpha}{2}-\beta && \displaystyle \beta+\frac\gamma2-\mu\,\alpha\,\beta\\
  \displaystyle  \beta+\frac\gamma2-\mu\,\alpha\,\beta &&  \displaystyle \mu\,\beta^2-\frac{3\,\gamma}{2}
 \end{bmatrix}
$$
which is nothing but a quadratic polynomial on $(v-v_0)/t$ and $(x-X_t)/t^2$. As $\mu\rightarrow\infty$
\[\begin{cases}
 \text{tr}\,M_q\,=\,\mu(\alpha^2+\beta^2)+O(1) \\
 \displaystyle \text{det}\,M_q\,=\mu\Big[\,\frac{3\,\alpha\,\beta^2}{2}+\alpha\,\beta\,\gamma-\beta^3-\frac{3\alpha^2\gamma}{2}\Big]+O(1),
\end{cases}\]
both positive quantities if $\beta>\alpha$ and $\alpha\,\gamma>\beta^2$. In particular, for $\beta=2\,\alpha$ and $\gamma=8\,\alpha$,
\[\begin{cases}
 \text{tr}\,M_q\,=5\,\alpha^2\,\mu+O(1) \\
 \displaystyle \text{det}\,M_q\,=2\,\alpha^3\,\mu+O(1),
\end{cases}\]
and letting $\mu\rightarrow\infty$ the eigenvalues of $M_q$ are of order $\mu\,\beta^2$ and $\beta$. So, for any fixed $C>0$ we may choose $\alpha,\beta,\gamma$ and $\mu$ such that
$$
\EE_1(P)\,\,\geq\,\, C\beta\Big(\frac{(v-v_0)^2}{t^2}+\frac{(x-X_t)^2}{t^4}\Big).
$$

Second, if $t\in(0,1)$ then
$$
\EE_2(P)\,\,\geq\,\,-4\beta\frac{(x-X_t)^2}{t^4}-\frac{3\beta(v-v_0)^2}{2}-\frac{3\beta(x-X_t)^2}{2t^4}-2\beta\bar A^2-\frac\beta{2t},
$$
and making $\tau\leq1$, we get,
$$
 \mathcal{E}(P)\geq \text{const}\,\frac{\beta}{t}\Big[C\Big(\frac{(v-v_0)^2}{t}+\frac{(x-X_t)^2}{t^3}\Big)-1\Big]\,,
$$
with $C$ arbitrarily large.

Let us briefly describe the rest of the proof. Recall that $(x,v)\,\notin\,B_r$ so
\begin{enumerate}
 \item either $|v-v_0|\geq r$, then $\mathcal{E}(P)\geq\text{const.}(\beta/t)[Cr^2/\tau-1]$, which is positive for $C>\tau/r^2$;
 \item or $|x-x_0|\geq r^3$, and then, if $\tau\leq\frac12\min(1,-\log(\frac{r^3}{|x_0-v_0|}))$ then for any $t\in[0,\tau)$
 $$
  |X_t-x_0|\leq r^3/2\,\quad\text{and}\quad\frac{|x-X_t|^2}{t^2}\,\geq\, \frac{|x-x_0|^2}{2t^2}-\frac{|X_t-x_0|^2}{t^2}\,\geq\, \frac{r^6}{4\tau^2},
 $$
 so $\mathcal{E}(P)\geq\text{const.}(\beta/t)[Cr^6/4\tau^3-1]$, which is positive as soon as $C>4\tau^3/r^6$.
\end{enumerate}

Summarizing: under the assumptions, we can always choose constants $\gamma>\beta>\alpha>1$ and $\alpha\,\gamma>\beta^2$, depending only on $\bar A$ and $r^2/\tau$, so that 
$$
\LL\,\varphi\geq0,\quad\text{ in }[0,\tau)\times (B_{\lambda r}\setminus B_r),
$$
as soon as $\tau<\min(1/2,-\log(r^3/2|x_0-v_0|))$.
\smallskip

\bigskip
\noindent
{\sl Step 2. Boundary conditions.} We now wish to prove that $\varphi\leq g$ for $t=0$ and for any $(x,v)\in \partial(B_{\lambda r}\setminus B_r)$; then classical maximum principle will do the rest. 

Let us first notice that the boundary condition at $t=0$ is obvious ($\varphi$ can be extended by continuity by $0$ at the initial time). The condition at $\partial B_r$ is also true since $\forall\,(x,v)\in\partial B_r$: $\varphi\leq\delta\leq g$.

It remains to fix the remaining parameters in order to conclude that $\varphi\leq g$ in $\partial B_{\lambda r}$. From the choice of $\alpha,\beta$ and $\gamma$, it is easy to see that for any $(x,v)\in \partial B_{\lambda r}$
:$$
 P(t,x,v)\geq \frac{\alpha}{4}\,\Big(\frac{(v-v_0)^2}{t}+\frac{(x-X_t)^2}{t^3}\Big)\geq \frac{\alpha}4\min\Big(\frac{\lambda^2r^2}{\tau},\frac{\lambda^6r^6}{4\tau^3}\Big)\geq \frac{\alpha\,\lambda^2}{16}\min\Big(\frac{r^2}{\tau},\frac{r^6}{\tau^3}\Big),
$$
notice that we are imposing $\lambda>1$. Choosing
$$
\varepsilon\,=\,\delta\exp\Big( -\frac{\mu\,\alpha\,\lambda^2}{16}\min\Big(\frac{r^2}{\tau},\frac{r^6}{\tau^3}\Big) \Big),
$$ 
we get $\varphi=\delta\,e^{-\mu P(t,x,v)}-\varepsilon\leq 0$ on $\partial B_{\lambda r}$. By consequence $\varphi\leq g$ on the whole set $B_{\lambda r}$.
\smallskip

Let us finally notice that at this point we have uniform bounds for $g$ on $B_{2 r}\setminus B_r$ for any $t\in[\tau/2,\tau)$. Indeed,
$$
 P(t,x,v)\leq 2\,\gamma\,\Bigg(\frac{(v-v_0)^2}{t}+\frac{(x-X_t)^2}{t^3}\Bigg)\leq 2\,\gamma\,\Big(\frac{8\,r^2}{\tau}+\frac{1026\,r^6}{\tau^3}\Big)\leq2068\,\gamma\,\max\Big(\frac{r^2}{\tau},\frac{r^6}{\tau^3}\Big)
$$
Then, for $\lambda$ big enough we find $K_0>0$ such that
$$
 \varphi(t,x,v)\geq \delta\,\Big[\exp\Big(-2068\,\mu\,\gamma\,\max\Big(\frac{r^2}{\tau},\frac{r^6}{\tau^3}\Big)\Big)-\exp\Big( -\frac{\mu\,\alpha\,\lambda^2}{16}\min\Big(\frac{r^2}{\tau},\frac{r^6}{\tau^3}\Big) \Big)\Big]\geq K_0\,\delta,
$$
because $\gamma=8\,\alpha$, to find such $\lambda$ it suffices that
$$
 2068\times16\times8\,\max\Big(\frac{r^2}{\tau},\frac{r^6}{\tau^3}\Big)\leq\lambda^2\,\min\Big(\frac{r^2}{\tau},\frac{r^6}{\tau^3}\Big),
$$
by consequence $\lambda$ depends only on $r^2/\tau$. 

Finally, we find $K,\lambda>0$ depending on $\bar A,$ $\bar C$ and $r^2/\tau$ such that
$$
f\geq K_0\,\delta\, e^{-\tau\,\bar C}\quad\text{on}\quad[\tau/2,\tau)\times (B_{2r}\setminus B_r).
$$
\end{proof}

\begin{rem}
 Let us notice that we can extend Theorem~\ref{th:FPtheo} to some cases when $A$ or $C$ are not necessarily bounded and $\Omega=\R^2$. It suffices to take any $r,\tau>0$ and fix $\lambda$ (which as we saw only depends on a numerical constant and the ratio $r^2/\tau$). We can then fix $R>0$ big enough, in order to have that $\lambda r< R$ and study the equation into $B_R$, where by continuity $A$ and $C$ attain their maximum in the compact set $[0,\tau]\times \bar B_R$.
\end{rem}

\bibliographystyle{acm}

\signsm  \signcq
 \signjt  

\end{document}